\declaretheorem[name=Theorem,numberwithin=section]{thm}
\newtheorem{lem}[thm]{Lemma}
\newtheorem{prop}[thm]{Proposition}
\theoremstyle{definition}
\newtheorem{defn}[thm]{Definition}
\newtheorem{rem}[thm]{Remark}
\newtheorem{claim}[thm]{Claim}
\numberwithin{equation}{section}
\numberwithin{table}{section}
\numberwithin{figure}{section}
\newcommand{\e}{\varepsilon}
\newcommand{\al}{\alpha}
\newcommand{\Om}{\Omega}
\newcommand{\om}{\omega}
\newcommand{\R}{\mathbb{R}}
\newcommand{\N}{\mathbb{N}}
\newcommand{\D}{\mathscr{D}}
\newcommand{\Oo}{\mathcal{O}}
\newcommand{\Cc}{C_{\e}}
\newcommand{\bmo}{{\rm BMO}}
\newcommand{\intav}{-\!\!\!\!\!\!\int}
\newcommand{\intq}{\frac{1}{\mu(Q)} \, \int_Q \,}
\newcommand{\intqj}{\frac{1}{\mu(Q_j)} \, \int_{Q_j} \,}
\DeclareMathOperator{\supp}{supp}
\def\ls{\lesssim}
\def\gs{\gtrsim}
\def\r{\right}
\def\lf{\left}
\def\noz{\nonumber}
\DeclareMathOperator{\dia}{diam}
\begin{document}
\begin{frontmatter}

\title{Functions of bounded mean oscillation  \\ and  quasisymmetric mappings \\ on spaces of homogeneous type\tnoteref{mytitlenote}}
\tnotetext[mytitlenote]{The first author was supported by an
Australian Government Endeavour Postgraduate Scholarship and a Lift-off Fellowship of the Australian Mathematical Society. The second author was supported by the Australian Research Council,
Grant ARC-DP160100153.}


\author[mymainaddress]{Trang T.T. Nguyen\corref{mycorrespondingauthor}}\cortext[mycorrespondingauthor]{Corresponding author}
\ead{trang.t.nguyen1@mymail.unisa.edu.au}

\author[mymainaddress]{Lesley A. Ward}
\ead{lesley.ward@unisa.edu.au}

\address[mymainaddress]{School of Information Technology and Mathematical Sciences\\ University of South Australia\\ Mawson Lakes SA 5095, Australia}

\begin{abstract}
We establish a connection between the function space~$\bmo$ and the theory of quasisymmetric mappings on \emph{spaces of homogeneous type}~$\widetilde{X} :=(X,\rho,\mu)$.
The connection is that the logarithm of the generalised Jacobian of an $\eta$-quasisymmetric mapping~$f: \widetilde{X} \rightarrow \widetilde{X}$ is always in~$\bmo(\widetilde{X})$.
In the course of proving this result, we first show that on~$\widetilde{X}$, the logarithm of a reverse-H\"{o}lder weight~$w$ is in~$\bmo(\widetilde{X})$, and that the above-mentioned connection holds on metric measure spaces~$\widehat{X} :=(X,d,\mu)$.
Furthermore, we construct a large class of spaces~$(X,\rho,\mu)$ to which our results apply.
Among the key ingredients of the proofs are suitable generalisations to $(X,\rho,\mu)$ from the Euclidean or metric measure space settings of the Calder\'{o}n--Zygmund decomposition, the Vitali Covering Theorem, the Radon--Nikodym Theorem, a lemma which controls the distortion of sets under an $\eta$-quasisymmetric mapping,  and a result of Heinonen and Koskela which shows that the volume derivative of an $\eta$-quasisymmetric mapping is a reverse-H\"{o}lder weight.
\end{abstract}

\begin{keyword}
BMO, quasiconformal mappings, quasisymmetric mappings, spaces of homogeneous type, metric measure spaces, reverse-H\"{o}lder weights, Jacobian determinant
\MSC[2010] Primary 42B35; Secondary 30L10, 42B25, 30C65, 46B22, 28C15, 28A20
\end{keyword}

\end{frontmatter}

\newpage
\tableofcontents
\section{Introduction and Statement of Main Results}
\setcounter{equation}{0}
There is now an extensive body of research related to quasiconformal and quasisymmetric mappings on Euclidean spaces~$\R^n$. For instance, among the 33 problems about mappings, measures and metrics listed in~\cite{HS97}, eight are related to quasiconformal and quasisymmetric mappings, and two are specifically related to the Jacobian of a quasiconformal map. See also \cite{Chr07,Leo05}.

Recently, much effort has been devoted to extending the theory on Euclidean spaces to more general settings, in particular, metric measure spaces and spaces of homogeneous type.
As stated in~\cite{HKST15}, the origins of analysis on metric spaces ``lie in the search for an abstract context suitable to recover a substantial component of the classical Euclidean geometric function theory
associated to quasiconformal and quasisymmetric mappings''.

The present work is a contribution to this effort, where we establish a connection between the function space~$\bmo$ and the theory of quasiconformal mappings in an abstract and general setting, namely spaces of homogeneous type~$(X,\rho,\mu)$.
The connection is that the logarithm of the generalised Jacobian of an $\eta$-quasisymmetric mapping~$f: X \rightarrow X$ is always in~$\bmo(X)$.
This generalises a result of H.M. Reimann from the setting of Euclidean spaces~$\R^n$.
Reimann proved in \cite[Theorem~1]{Rei74} that the logarithm of the Jacobian determinant of a quasiconformal  mapping $f: \mathbb{R}^n \rightarrow \mathbb{R}^n$ is always in \( \bmo(\R^n) \).
We also note that the proof strategy of our results is completely independent of that of the original proof of Reimann's Theorem~1 on~$\R^n$.

In fact, a part of our goal is to identify a set of hypotheses on metric measure spaces~$(X,d,\mu)$ and spaces of homogeneous type~$(X,\rho,\mu)$, under which we can generalise Reimann's Theorem~1. As shown in the statements of our main results below, the spaces~$(X,d,\mu)$ and~$(X,\rho,\mu)$ are assumed to satisfy some additional conditions. Most of these conditions come from~\cite[Theorem~7.11]{HK98}, cited below as Theorem~\ref{7.11}, as the proof of our Theorem~\ref{thm.R1.metric} relies on this theorem. We will make clear in Remark~\ref{rem:hypo_use} and in our proofs where the hypotheses on the spaces are used. 

This paper has four main components:
(1) show that the logarithm of a reverse-H\"{o}lder weight on a space of homogeneous type is in~$\bmo$ (see Theorem~\ref{thm.logBMO} below),
(2) generalise Reimann's Theorem~1 to metric measure spaces (Theorem~\ref{thm.R1.metric}),
(3) generalise Reimann's Theorem~1 to spaces of homogeneous type (Corollary~\ref{thm.R1.quasimetric}), and
(4) construct a large class of spaces of homogeneous type to which our Corollary~\ref{thm.R1.quasimetric} applies (Example~\ref{thm:construct}). We describe these components in more detail below.

\( \bmo \) is the space of functions of \emph{bounded mean oscillation}.
A locally integrable real-valued function is in~$\bmo(\R^n)$ if its mean oscillation over all cubes in~$\R^n$ is uniformly bounded (Definition~\ref{def:BMO_R}).
The function space \( \bmo \) was first introduced by F. John, in his studies on rotation and strain in solid objects, in 1961 \cite{Joh}.
Since then, \( \bmo \) has been used in many different contexts.
\( \bmo \) also plays a key role in interpolation theorems used to establish the boundedness of operators on $L^p(\mathbb{R}^n)$, which in turn has applications in partial differential equations. See~\cite{Deco}, \cite{FS}, \cite{Gar81}, \cite{Joni}, \cite{Ste93}  and the references therein  for properties and more applications of~$\bmo$.

While conformal mappings take infinitesimal circles to infinitesimal circles, \emph{quasiconformal} mappings take infinitesimal circles to infinitesimal ellipses of uniformly bounded eccentricity (Definition~\ref{def:quasiconformal}).
Roughly speaking, at small scales, quasiconformal mappings can only distort shapes by a bounded amount.
They were introduced by Gr\"{o}tzsch (1928) and named by Ahlfors (1935).
Quasiconformal mappings found applications in various contexts, especially in complex analysis. See~\cite{Ahl06} for more details of quasiconformal mappings.
See also Section~\ref{subsec:quasisymmetic}.

A \emph{space of homogeneous type} is defined to be a triple~$(X,\rho,\mu)$, where~$X$ is a set, $\rho$ is a quasimetric on~$X$, and $\mu$ is a doubling measure on~$X$ (Definition~\ref{defn:(X,rho,mu)}).
Spaces of homogeneous type were introduced by Coifman and Weiss in 1971~\cite{CW71}.
Meyer wrote: ``\ldots~the action takes place today on spaces of homogeneous type. No group structure is available, the Fourier transform is missing, but a version of harmonic analysis is still present. Indeed the geometry is conducting the analysis'' \cite{DH09}.

Our first main result is an extension of a well known result in~$\R^n$. Given a reverse-H\"{o}lder weight~$w$ on a space of homogeneous type~$(X,\rho,\mu)$, we show that its logarithm is in~$\bmo$. We state this result as Theorem~\ref{thm.logBMO} below.
\begin{restatable}[\textbf{Reverse-H\"{o}lder weights and $\bmo$}]{thm}{logBMO}\label{thm.logBMO}
Suppose~$(X,\rho,\mu)$ is a space of homogeneous type. Suppose also that the measure~$\mu$ is Borel regular.
Let~$w$ be a weight on~$X$ such that~$w \in RH_q(X,\rho,\mu)$ for some~$q \in (1,\infty)$.
Then~$\log w \in \bmo(X,\rho,\mu)$.
\end{restatable}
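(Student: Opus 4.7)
The plan is to show that for every $\rho$-ball $B \subset X$, the mean oscillation
\[
\tfrac{1}{\mu(B)}\int_B |\log w - (\log w)_B|\,d\mu
\]
is bounded by a constant depending only on~$q$, the $RH_q$ constant~$C_0$ of~$w$, and the quasimetric/doubling constants of~$(X,\rho,\mu)$. Write $w_B := \tfrac{1}{\mu(B)} \int_B w\, d\mu$. Jensen's inequality applied to the concave function~$\log$ gives $(\log w)_B \leq \log w_B$, so by the triangle inequality it suffices to bound, independently of~$B$,
\begin{equation*}
I_1(B) := \tfrac{1}{\mu(B)} \int_B \bigl|\log w - \log w_B\bigr|\,d\mu
\qquad \text{and}\qquad
I_2(B) := \log w_B - (\log w)_B \geq 0.
\end{equation*}

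For the upper-tail part of~$I_1$, I would use the elementary bound $(\log t)^+ \leq t^{\varepsilon}/\varepsilon$ (valid for $t > 0$ and $\varepsilon > 0$) with some $\varepsilon \in (0, q-1]$, together with the $RH_q$ hypothesis applied via H\"older's inequality to compare the $L^{\varepsilon}$- and $L^q$-averages of~$w$:
\begin{equation*}
\tfrac{1}{\mu(B)} \int_B \bigl(\log(w/w_B)\bigr)^+ \,d\mu
\;\leq\; \tfrac{1}{\varepsilon\, w_B^{\varepsilon}}\cdot \tfrac{1}{\mu(B)} \int_B w^{\varepsilon}\,d\mu
\;\leq\; \tfrac{C_0^{\varepsilon}}{\varepsilon}.
\end{equation*}
This is the direct analogue of the Euclidean computation and should go through on $(X,\rho,\mu)$ with no genuinely new input beyond the definition of $RH_q$ on $\rho$-balls.

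The lower-tail part of~$I_1$ and the Jensen gap~$I_2$ are the crux of the argument. Applying $(\log t)^+ \leq t^{\delta}/\delta$ to $w_B/w$ reduces both to an averaged bound of the form
\[
\tfrac{1}{\mu(B)} \int_B w^{-\delta}\,d\mu \;\leq\; C\, w_B^{-\delta}\quad\text{for some }\delta > 0,
\]
i.e.~to the statement that $w$ belongs to $A_{1+1/\delta}(X,\rho,\mu)$. Thus the main obstacle is the self-improvement implication ``$w \in RH_q \Rightarrow w \in A_p$ for some finite~$p$'' in the quasimetric setting. In~$\R^n$ this is Gehring's lemma, proved via the Calder\'on--Zygmund decomposition together with a Coifman--Fefferman-type argument; on $(X,\rho,\mu)$ the same strategy should go through using the quasimetric Calder\'on--Zygmund decomposition and the Vitali-type covering lemma that the authors indicate (in the abstract) they will develop, together with Borel regularity of~$\mu$ to justify the layer-cake decomposition of the integrals involved. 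Once that self-improvement is in hand, combining the bounds on~$I_1$ and~$I_2$ and taking the supremum over $\rho$-balls~$B$ yields $\log w \in \bmo(X,\rho,\mu)$, as desired.
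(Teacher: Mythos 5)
Your reduction is correct and follows the same conceptual route as the paper's: everything hinges on the quasimetric analogue of the Coifman--Fefferman self-improvement $RH_q \Rightarrow A_p$, after which bounding the oscillation of $\log w$ is elementary. Your treatment of that last step is in fact a little more direct than the paper's: you use $RH_q$ on quasiballs for the upper tail of the oscillation and $A_p$ for the lower tail and the Jensen gap, whereas the paper routes everything through $A_p^{\D}$ on dyadic cubes, derives $\log w \in \bmo_{\D^t}$ for each dyadic system (Theorem~\ref{AplogW}), and only at the end recovers $\bmo$ for quasiballs by intersecting over the finite family of adjacent dyadic systems, $\bmo(X) = \bigcap_{t=1}^{T}\bmo_{\D^t}(X)$. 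Where your sketch under-specifies the hard step is the ball-vs-cube mismatch: the Calder\'on--Zygmund decomposition the authors actually build (Theorem~\ref{CZ}), and on which the Gehring/Coifman--Fefferman iteration in Theorem~\ref{thm3} rests, is stated for nested \emph{dyadic cubes}, not quasiballs, and there is no equally clean stopping-time decomposition directly in terms of quasiballs in this generality. So ``the same strategy should go through using the quasimetric CZ decomposition'' implicitly requires the intermediate transfer $RH_q \Rightarrow RH_q^{\D^t}$ (the paper's Lemma~\ref{lem:RHr_implies_dRHr}, which additionally uses that $w$ is a doubling weight) followed by the adjacent-dyadic-systems patching, neither of which you have built into your plan; this is precisely the reason the authors develop the dyadic apparatus at all. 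A smaller mismatch: the paper invokes Borel regularity not to justify a layer-cake decomposition, but to ensure the Lebesgue Differentiation Theorem holds on $(X,\rho,\mu)$ (via Alvarado--Mitrea) in the proof of Theorem~\ref{thm3} that dyadic $RH_q$ implies dyadic $A_p$.
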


The proof of Theorem~\ref{thm.logBMO} is composed of five main steps, grouped into two parts: (a) a reverse-H\"{o}lder inequality implies Muckenhoupt's~$A_p$ condition (Steps~1--3), and (b) logarithms of~$A_p$ weights are in~$\bmo$ (Steps~4 and~5). In the setting of metric spaces, part~(b) is stated without proofs in~\cite[Theorem~3.2]{KS14a}, which refers to~\cite{Shu12} for proofs. At first sight, part~(a) seems to be done in both \cite[Theorem~4.4]{KK11} and \cite[Theorem~3.1 parts (6)--(9)]{KS14}, which again refers to~\cite{Shu12} for proofs. However, the results in~\cite{KK11} and~\cite{KS14} require some extra assumptions on either the weight or the underlying measure space. Specifically, in~\cite{KK11} the weight is assumed to be doubling, while in~\cite{KS14} the underlying space must satisfy the so-called \emph{annular decay property}.

Our Theorem~\ref{thm.logBMO} completely avoids the extra assumptions imposed in the earlier literature, which makes it a nontrivial generalisation of previous related results. This is achieved by making use of some recent tools of dyadic analysis in metric and quasimetric spaces, such as Calder\'{o}n--Zygmund decompositions and properties of reverse-H\"{o}lder weights and $A_p$ weights.

\medskip
\noindent \emph{Outline of proof of Theorem~\ref{thm.logBMO}.}
As usual, $(X,\rho,\mu)$ is a space of homogeneous type.
Here the function classes~$RH_q(X)$, $RH_q^{\D^t}(X)$, $A_p(X)$, $A_p^{\D}(X)$,  $\bmo(X)$ and $\bmo_{\D}(X)$ are defined in terms of the quasimetric~$\rho$ on~$X$. We could write for example~$RH_q(X,\rho,\mu)$ instead of~$RH_q(X)$, but for brevity we have chosen not to do so.

\begin{enumerate}
  \item Develop a version (Theorem~\ref{CZ}) of the Calder\'{o}n--Zygmund decomposition on $X$ in terms of dyadic cubes.
  \item Let $\{\D^t: t = 1,2,\ldots,T\}$ be a collection of adjacent systems of dyadic cubes in~$X$ (see Definition~\ref{cubesinX}, Theorem~\ref{klp1}). Show that if~$w \in RH_q(X)$ for some~$q \in (1,\infty)$, then $w \in RH_q^{\D^t}(X)$ for each~$t \in \{1,\ldots, T\}$. (See Lemma~\ref{lem:RHr_implies_dRHr}.)
  \item Let $\D$ denote any fixed system of dyadic cubes.
  Show that if~$w \in RH_q^{\D}(X)$ for some~$q \in (1,\infty)$, then $w \in A_p^{\D}(X)$ for some~$p \in (1,\infty)$.
  (See Theorem~\ref{thm3}.)
  \item Let $\D$ denote any fixed system of dyadic cubes. Show that
  if~$w \in A_p^{\D}(X)$ for some~$p \in (1,\infty)$, then~$\log w \in \bmo_{\D}(X)$. (See Theorem~\ref{AplogW}.)
  \item Since $\bmo(X) = \cap^T_{t=1}\bmo_{\mathscr{D}^t}(X)$ (see~\cite[Proposition 7.16]{HK12}), conclude that $\log w \in \bmo(X)$.
  \end{enumerate}

Step~1 is done in Section~\ref{subsec:CZcubes}, Theorem~\ref{CZ}.
Step~2 is done in Section~\ref{subsec:RHr_imply_dRHr}, Lemma~\ref{lem:RHr_implies_dRHr}.
Step~3 is done in Section~\ref{subsec:RhripliesAp}, Theorem~\ref{thm3}.
Step~4 is done in Section~\ref{subsec:logAp_BMO}, Theorem~\ref{AplogW}.
Step~5 is straightforward.
Let $\{\D^t: t = 1,2,\ldots,T\}$ be a collection of adjacent system of dyadic cubes of $X$, as in Definition~\ref{cubesinX}. By Theorem~\ref{klp1}, such a collection exists. Fix $t \in \{1,\ldots,T\}$.
From Step~4 we obtain
$\log w \in \bmo_{\mathscr{D}^t}(X)$.
Since this is true for all $t=1,2,\ldots,T$, we conclude that
$\log w \in \bigcap_{t=1}^T \bmo_{\mathscr{D}^t}(X).$
By Proposition~7.16 in \cite{HK12}, $\log w \in \bmo(X)$ with
$\|\log w\|_{\bmo} \leq C\sum_{t=1}^T \|\log w\|_{\bmo_{\mathscr{D}^t}},$
where~$C > 0$ depends only on~$X$ and~$\mu$.
This together with the proofs given below of the results in Steps~1--4 completes the proof of Theorem~\ref{thm.logBMO}.
\(\hfill \Box\)
\medskip

A \emph{metric measure space} is defined to be a triple~$(X,d,\mu)$, where~$X$ is a set, $d$ is a metric on~$X$, and~$\mu$ is a doubling measure on~$X$. Note that every metric measure space is a space of homogeneous type.
With Theorem~\ref{thm.logBMO} in hand, we will be able to proceed with our main purpose, which is generalising Reimann's Theorem~1. This is done by applying Theorem~\ref{thm.logBMO} to specific weights, namely the generalised Jacobians $\widehat{J}_f$ and~$\widetilde{J}_f$.
Our second main result is stated in Theorem~\ref{thm.R1.metric}. We generalise Reimann's result from functions $f: \mathbb{R}^n \rightarrow \mathbb{R}^n$ to functions $f: (X,d,\mu) \rightarrow (X,d,\mu)$, where $(X,d,\mu)$ is a metric measure space satisfying certain conditions.
As the analogue of quasiconformality we use $\eta$-quasisymmetry. As the analogue of the Jacobian determinant~$J_f$, we use the generalised Jacobian~$\widehat{J}_f$ with respect to (w.r.t.) the metric~$d$, defined in equation~\eqref{eq:JacoQuasi_hat}.

\begin{restatable}[\textbf{Reimann's Theorem~1 generalised to $(X,d,\mu)$}]{thm}{metric}
\label{thm.R1.metric}
Let~$(X,d,\mu)$ be a metric measure space such that\\
\indent\textup{(i)} $\mu$ is a locally finite Borel-regular measure with dense support,\\
\indent \textup{(ii)} $X$ is locally compact,\\
\indent \textup{(iii)} $X$ is $\al$-regular for some~$\al > 1$, and\\
\indent \textup{(iv)} $X$ admits a weak $(1,p)$-Poincar\'{e} inequality for some~$p \in [1,\al)$.\\
Let $f$ be an $\eta$-quasisymmetric  mapping of~$(X,d,\mu)$ onto itself.
Let~$\widehat{J}_f$ be the generalised Jacobian of~$f$ w.r.t.~the metric~$d$.
Then~$\widehat{J}_f$ exists and is finite for~$\mu$-almost every ($\mu$-a.e.)~$x$, and $\log \widehat{J}_f \in \bmo(X,d,\mu)$.
\end{restatable}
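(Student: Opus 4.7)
The plan is to exhibit $\widehat{J}_f$ as a reverse-H\"{o}lder weight and then invoke Theorem~\ref{thm.logBMO}. Let $\mu_f$ denote the pullback measure $\mu_f(E) := \mu(f(E))$ for Borel sets $E \subset X$; since $f$ is an $\eta$-quasisymmetric homeomorphism of $X$ onto itself, $\mu_f$ is a well-defined Borel measure on $X$. The first task is to show $\mu_f \ll \mu$. I would use the quasisymmetric distortion lemma mentioned in the abstract, which controls how the $f$-image of a set of small diameter compares with a ball of comparable size, together with the $\al$-regularity of $\mu$, to conclude that $f$ sends $\mu$-null sets to $\mu$-null sets.

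Next, the generalised Radon--Nikodym theorem (also listed among the tools in the abstract) applied to $\mu_f \ll \mu$ yields a nonnegative, locally $\mu$-integrable density $\widehat{J}_f$ with $d\mu_f = \widehat{J}_f \, d\mu$, which is in particular finite $\mu$-a.e. The Vitali covering theorem, combined with the standard Lebesgue differentiation argument available on doubling metric measure spaces, then identifies $\widehat{J}_f$ $\mu$-a.e.\ with the \emph{volume derivative}
\[
\lim_{r \to 0^+} \frac{\mu(f(B(x,r)))}{\mu(B(x,r))},
\]
which is the form assumed in the Heinonen--Koskela theorem cited as Theorem~\ref{7.11}.

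Observe that hypotheses (i)--(iv) of the present theorem are chosen precisely to match those of Theorem~\ref{7.11}, so that theorem gives $\widehat{J}_f \in RH_q(X,d,\mu)$ for some $q \in (1,\infty)$. Since every metric measure space is a space of homogeneous type, Theorem~\ref{thm.logBMO} applied to the weight $w = \widehat{J}_f$ then yields $\log \widehat{J}_f \in \bmo(X,d,\mu)$, finishing the argument.

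The main obstacle I expect lies in the first two steps: establishing the absolute continuity $\mu_f \ll \mu$ and then the $\mu$-a.e.\ identification of the Radon--Nikodym derivative with the pointwise volume derivative, all on a general metric measure space without recourse to Euclidean-specific tools such as the area formula. Both depend on a careful interplay between the distortion properties of $\eta$-quasisymmetric mappings, the $\al$-regularity of $\mu$, and the doubling property, and they require verifying that the generalised Vitali and Radon--Nikodym theorems developed earlier in the paper are strong enough to support the differentiation step in this setting.
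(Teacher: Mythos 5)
Your high-level plan matches the paper's: realise $\widehat{J}_f$ as a reverse-H\"{o}lder weight through the Heinonen--Koskela machinery and finish with Theorem~\ref{thm.logBMO}. The last two stages of the proposal are essentially the paper's Steps~3--4. But the first stage contains a genuine error, and the third has a gap you have glossed over.

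The error is the route you propose to absolute continuity. You want to deduce $\mu_f \ll \mu$ from ``the quasisymmetric distortion lemma \ldots together with the $\al$-regularity of~$\mu$''. This cannot work. The distortion lemma of Section~\ref{subsec:mu_f dbl} controls \emph{ratios} of distances under~$f$ (so it proves $\mu_f$ is \emph{doubling}), but it gives no absolute bound on $\mu(f(B))$ in terms of $\mu(B)$: a quasisymmetric map can expand all balls of a given scale by an uncontrolled amount while keeping ratios bounded. Indeed, there are classical singular quasisymmetric maps of~$\R$ (and of Ahlfors-regular spaces without a Poincar\'e inequality) that push forward a full-measure set to a null set, so absolute continuity genuinely fails in the absence of hypothesis~(iv). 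Any proof of $\mu_f \ll \mu$ must therefore use the Poincar\'e inequality in an essential way. The paper does exactly this: in Lemma~\ref{lem:mu_f abs cts} it invokes~\cite[Corollary~7.13]{HK98}, which gives $\mathcal{H}_{\al}\circ f \ll \mathcal{H}_{\al}$ under hypotheses~(i)--(iv), and then transfers this to~$\mu$ via the comparability $\mu \sim \mathcal{H}_{\al}$ supplied by $\al$-regularity. Your proposed argument never touches the Poincar\'e inequality, so it would ``prove'' something false.

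The gap is in the jump from Theorem~\ref{7.11} to $\widehat{J}_f \in RH_q(X,d,\mu)$: the conclusion of~\cite[Theorem~7.11]{HK98} is a reverse-H\"{o}lder inequality for the volume derivative~$V_f$ \emph{with respect to the Hausdorff measure}~$\mathcal{H}_{\al}$, not for $\widehat{J}_f$ with respect to~$\mu$. One must show $\mu \sim \mathcal{H}_{\al}$, $\mu_f \sim \sigma_f$, and hence $\widehat{J}_f \sim V_f$, and then push the reverse-H\"{o}lder inequality for $V_f$ (in $d\mathcal{H}_{\al}$) across these comparabilities to obtain one for $\widehat{J}_f$ (in $d\mu$). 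This is the content of Lemma~\ref{lem:mu_f compar sigma_f} and the ensuing estimate in the proof of Theorem~\ref{thm:J_f in RHr}; it is routine once identified, but it is a step your proposal silently skips. Finally, note that the paper also includes a measurability step for~$\widehat{J}_f$ (Lemma~\ref{lem:Jfhat measurable}) before the reverse-H\"{o}lder argument; this is harmless under your formulation, since a Radon--Nikodym derivative is automatically measurable, but it is worth being explicit that you are relying on that fact.
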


We will show that~$\widehat{J}_f$ is a weight that satisfies the hypothesis of Theorem~\ref{thm.logBMO}. Specifically, $\widehat{J}_f$ is a reverse-H\"{o}lder weight. This requires our generalisation of the result of Heinonen and Koskela in~\cite[Theorem~7.1]{HK98}, and this is where conditions~(i)--(iv) on our space~$X$ are needed (see Theorems~\ref{7.11} and~\ref{thm:J_f in RHr}). Therefore~$\log \widehat{J}_f \in \bmo(X)$.

Theorem~\ref{thm.R1.metric} applies to a range of different metric measure spaces. To name a few: complete Riemannian manifolds~$X^n$, $n > 2$ with non-negative Ricci curvature, endowed with Riemannian volume and Riemannian metric \cite{HK98}; the metric measure space~$(H_1, \mathcal{L}_3)$ where $H_1 := (\R^3,d)$ is the first Heisenberg group equipped with the Carnot metric and~$\mathcal{L}_3$ denotes Lebesgue measure on~$\R^3$ \cite{Hei01}; the boundary of a hyperbolic building equipped with a geodesic metric and a Hausdorff measure \cite{BP99}, and the space~$F$ constructed as~$I \times K$, where~$I$ is an interval and $K$ is a Cantor set \cite{Laa00}.

The proof of Theorem~\ref{thm.R1.metric} is composed of four steps, which are outlined below.

%
%

\medskip
\noindent \emph{Outline of proof of Theorem~\ref{thm.R1.metric}.}
Here the function classes~$\bmo(X)$ and $RH_q(X)$ are defined in terms of the metric~$d$ on~$X$. We could write for example~$\bmo(X,d,\mu)$, but for brevity we have chosen not to do so.

\begin{enumerate}
  \item Show that~$\widehat{J}_f(x)$ exists and is finite for $\mu$-a.e. $x \in X$. (See Lemma~\ref{lem:J_f_hat exist}.)
  \item Show that~$\widehat{J}_f$ is measurable. (See Lemma~\ref{lem:Jfhat measurable}.) This is necessary to prove the next step.
  \item Show that $\widehat{J}_f \in RH_q(X)$. (See Theorem~\ref{thm:J_f in RHr}.)
  \item Using Theorem~\ref{thm.logBMO}, conclude that $\log \widehat{J}_f \in \bmo(X)$.
  \end{enumerate}
Step~1 is done in Section~\ref{subsec:Jfhat_exist}, Lemma~\ref{lem:J_f_hat exist}.
Step~2 is done in Section~\ref{subsec:Jfhat_meable}, Lemma~\ref{lem:Jfhat measurable}.
Step~3 is done in Section~\ref{subsec:Jfhat_RHr}, Theorem~\ref{thm:J_f in RHr}.
Step~4 is done in Section~\ref{subsec:log_Jfhat_BMO}.
\(\hfill \Box\)
\medskip

Can Reimann's result be extended even further to spaces of homogeneous type~$(X,\rho,\mu)$ with the generalised Jacobian being defined w.r.t.~a quasimetric~$\rho$, instead of a metric~$d$? The answer is yes.
Our third main result is stated in Corollary~\ref{thm.R1.quasimetric}. We generalise Reimann's result from functions $f: \mathbb{R}^n \rightarrow \mathbb{R}^n$ to functions $f: (X,\rho, \mu) \rightarrow (X,\rho, \mu)$, where $(X,\rho, \mu)$ is a space of homogeneous type satisfying certain conditions.
As the analogue of quasiconformality we again use $\eta$-quasisymmetry. As the analogue of the Jacobian determinant~$J_f$, we use the generalised Jacobian~$\widetilde{J}_f$ associated with the quasimetric~$\rho$, which is introduced in~Section~\ref{subsec:quasisymmetic}.

\begin{restatable}[\textbf{Reimann's Theorem~1 generalised to $(X,\rho,\mu)$}]{cor}{quasimetric}\label{thm.R1.quasimetric}
Suppose $\widetilde{X} :=(X,\rho,\mu)$ is a space of homogeneous type such that\\
\indent \textup{(a)} $\mu$ is a locally finite Borel-regular measure with dense support,\\
\indent \textup{(b)} $\widetilde{X}$ is locally compact, and\\
\indent \textup{(c)} $\widetilde{X}$ is $\al$-regular for some~$\al > 1$.\\
Given~$\e \in (0,1]$, let~$\rho_{\e}(x,y) := \rho(x,y)^{\e}$ for all~$x,y \in X$.
Let~$d_{\e}$ be a metric which is comparable to~$\rho_{\e}$.
Suppose the metric measure space~$\widehat{X} :=(X,d_{\e},\mu)$  satisfies\\
\indent \textup{(d)} $\widehat{X}$ admits a weak $(1,p)$-Poincar\'{e} inequality for some~$p\in [1,\al/\e)$.\\
Let $f$ be an $\eta$-quasisymmetric map from $\widetilde{X}$ onto itself.
Let~$\widetilde{J}_f$ be the generalised Jacobian of~$f$ w.r.t.~the quasimetric~$\rho$.
Then~$\widetilde{J}_f$ exists and is finite for $\mu$-a.e.~$x$, and~$\log  \widetilde{J}_f \in \bmo(\widetilde{X})$.
\end{restatable}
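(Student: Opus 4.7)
The plan is to reduce Corollary~\ref{thm.R1.quasimetric} to Theorem~\ref{thm.R1.metric} by passing from the quasimetric~$\rho$ to the comparable metric~$d_\e$. The comparability $d_\e \sim \rho^\e$ gives constants $c_1,c_2 > 0$ such that
\[
B_\rho\bigl(x,(r/c_2)^{1/\e}\bigr) \subseteq B_{d_\e}(x,r) \subseteq B_\rho\bigl(x,(r/c_1)^{1/\e}\bigr)
\]
for all $x \in X$ and $r > 0$; hence the $\rho$- and $d_\e$-ball systems induce the same topology and are mutually comparable in the doubling-measure sense. My strategy is: \textup{(1)} check that $\widehat{X}:=(X,d_\e,\mu)$ satisfies hypotheses~(i)--(iv) of Theorem~\ref{thm.R1.metric}; \textup{(2)} check that $f$ remains quasisymmetric as a self-map of~$\widehat{X}$; \textup{(3)} apply Theorem~\ref{thm.R1.metric} to obtain $\log \widehat{J}_f \in \bmo(\widehat{X})$; and \textup{(4)} transfer this conclusion to $\log \widetilde{J}_f \in \bmo(\widetilde{X})$.

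Step~\textup{(1)} is largely routine. Hypotheses~(i) and~(ii) depend only on~$\mu$ and the topology, both of which are unchanged, so they follow from~(a) and~(b); hypothesis~(iv) is precisely~(d). For~(iii), the $\al$-regularity of $\widetilde{X}$ combined with the ball inclusions above yields $\mu(B_{d_\e}(x,r)) \asymp \mu(B_\rho(x,r^{1/\e})) \asymp r^{\al/\e}$, so $\widehat{X}$ is $(\al/\e)$-regular, with $\al/\e > 1$ since $\al>1$ and $\e \in (0,1]$; the restriction $p \in [1,\al/\e)$ in~(d) matches this new regularity index. For step~\textup{(2)}, I would invoke the standard snowflake-invariance of quasisymmetry: the $\e$-snowflaking $\rho \mapsto \rho^\e$ (followed by bi-Lipschitz replacement by~$d_\e$) produces a new gauge of the form $\widetilde{\eta}(t) := C\,\eta(Ct^{1/\e})^\e$ that controls the $d_\e$-distortion of~$f$. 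With~\textup{(1)} and~\textup{(2)} in hand, Theorem~\ref{thm.R1.metric} applied to $(\widehat{X},f)$ delivers~\textup{(3)}.

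The main task is step~\textup{(4)}. Since $f$ is a bijection, the ball inclusions pass through~$f$, yielding the sandwich
\[
\frac{\mu\bigl(f(B_\rho(x,s_1))\bigr)}{\mu\bigl(B_\rho(x,s_2)\bigr)} \leq \frac{\mu\bigl(f(B_{d_\e}(x,r))\bigr)}{\mu\bigl(B_{d_\e}(x,r)\bigr)} \leq \frac{\mu\bigl(f(B_\rho(x,s_2))\bigr)}{\mu\bigl(B_\rho(x,s_1)\bigr)}
\]
with $s_i(r) := (r/c_i)^{1/\e}$. Using $\al$-regularity to compare $\mu(B_\rho(x,s_1))$ with $\mu(B_\rho(x,s_2))$ (a constant factor apart), and using the quasisymmetric control of image balls (a standard consequence of $\eta$-quasisymmetry, giving $\mu(f(B_\rho(x,s_2))) \leq C\,\mu(f(B_\rho(x,s_1)))$ when $s_1 \sim s_2$) on the image side, I would obtain $\widehat{J}_f(x) \asymp \widetilde{J}_f(x)$ for $\mu$-a.e.~$x$. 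Hence $\widetilde{J}_f$ exists and is finite $\mu$-a.e., and $\log \widetilde{J}_f - \log \widehat{J}_f \in L^\infty$. Finally, since the two ball systems are comparable, $\bmo(\widetilde{X}) = \bmo(\widehat{X})$ with equivalent norms, and the conclusion $\log \widetilde{J}_f \in \bmo(\widetilde{X})$ follows from~\textup{(3)}. The hard part will be the careful bookkeeping in step~\textup{(4)}: verifying that the limsup defining each Jacobian survives the sandwich (the sup being taken over radii tending to zero rather than a single radius requires a small but essential argument) and that quasisymmetric doubling on the image side provides the required comparability. No new analytic ingredient is needed beyond Theorem~\ref{thm.R1.metric}.
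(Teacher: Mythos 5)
Your overall strategy---pass from $\rho$ to the comparable metric $d_\e$, verify the hypotheses of Theorem~\ref{thm.R1.metric} for $\widehat{X}$, check $f$ is still quasisymmetric, apply that theorem, then transfer the conclusion back using comparability of the two Jacobians and equivalence of the two $\bmo$ spaces---is exactly the route the paper takes: steps \textup{(1)}--\textup{(4)} correspond to the paper's Lemmas~\ref{lem:loccompequi}--\ref{lem:homeoequi}, Lemma~\ref{lem:Jacoequi}, Proposition~\ref{prop:bmo equi}, and Proposition~\ref{prop:BMO_coincide}. Your snowflake gauge $\widetilde{\eta}(t) = C\eta(Ct^{1/\e})^\e$ is precisely the $\zeta$ of Lemma~\ref{lem:homeoequi}, and your use of measure doubling to absorb the constant-ratio mismatch of radii in the Jacobian sandwich matches Lemma~\ref{lem:Jacoequi}.

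The one point at which your plan has a genuine gap is the existence of $\widetilde{J}_f$. You write that the sandwich $\widehat{J}_f(x) \asymp \widetilde{J}_f(x)$ implies ``$\widetilde{J}_f$ exists and is finite $\mu$-a.e.,'' and you flag the limit argument as ``a small but essential argument''---but this is not merely bookkeeping. The sandwich tells you that the ratio $\mu_f(\overline{B}_\rho(x,s))/\mu(\overline{B}_\rho(x,s))$ stays within a bounded multiplicative band of $\widehat{J}_f(x)$ as $s \to 0^+$; it does not rule out oscillation within that band, so it cannot by itself show that the limit defining $\widetilde{J}_f(x)$ exists. The paper handles this by proving existence of $\widetilde{J}_f$ \emph{independently} of $\widehat{J}_f$: it develops a Radon--Nikodym Theorem on the quasimetric side (Theorem~\ref{lem:Nages analog}), which in turn rests on a Vitali Covering Theorem for $(X,\rho,\mu)$ (Theorem~\ref{thm:vitali in X}), and then applies it to $\nu = \mu_f$ (Lemma~\ref{lem:J_f_tilde exist}). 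Once both limits are known to exist, comparing them via the sandwich is routine (Lemma~\ref{lem:Jacoequi}). To close your gap you would need to either import that quasimetric Radon--Nikodym/Vitali machinery, or prove that the ratio has no oscillation as $s \to 0^+$---which in effect amounts to the same differentiation theorem.

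A second, smaller point: you need $\mu_f$ to be Borel regular to run the Radon--Nikodym argument; the paper records this in Lemma~\ref{lem:m_f Borel regular}. Your proposal does not mention it, though it is a short verification given that $f$ is a homeomorphism.
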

The metric~$d_{\e}$, which is comparable to~$\rho_{\e}$, can be obtained by using various approaches. In Section~\ref{subsec:spaceX}, we introduce three of them. Corollary~\ref{thm.R1.quasimetric} still holds if one uses any of those three, or even other approaches, to construct~$d_{\e}$.

The proof of Corollary~\ref{thm.R1.quasimetric} relies on Theorem~\ref{thm.R1.metric}, which is our generalisation of Reimann's Theorem~1 to metric measure spaces.
The idea of our proof is that starting from a space of homogeneous type~$\widetilde{X} :=(X,\rho,\mu)$ and an $\eta$-quasisymmetric map~$f$ on~$\widetilde{X}$ as in Corollary~\ref{thm.R1.quasimetric}, we can obtain a metric measure space~$\widehat{X} := (X,d_{\e},\mu)$ that satisfies the conditions of Theorem~\ref{thm.R1.metric}, and an $\varsigma$-quasisymmetric map~$f$ on~$\widehat{X}$.
Then by Theorem~\ref{thm.R1.metric}, we have~$\log  \widehat{J}_f \in \bmo(\widehat{X})$, where~$\widehat{J}_f$ is the generalised Jacobian associated with the metric~$d_{\e}$. Consequently, by the equivalence  of~$\widehat{J}_f$ and~$\widetilde{J}_f$ as shown in Lemma~\ref{lem:Jacoequi} and the fact that $\bmo(\widehat{X})$ and $\bmo(\widetilde{X})$ coincide as shown in Proposition~\ref{prop:BMO_coincide}, we conclude that~$\log  \widetilde{J}_f \in \bmo(\widetilde{X})$.

\medskip
\noindent \emph{Outline of proof of Corollary~\ref{thm.R1.quasimetric}.}
\begin{enumerate}
\item From the assumptions of Corollary~\ref{thm.R1.quasimetric} we obtain a metric measure space~$\widehat{X} :=(X,d_{\e},\mu)$ that satisfies the hypotheses of Theorem~\ref{thm.R1.metric}. In particular,\\
(i) $\mu$ is a locally finite Borel-regular measure with dense support,\\
(ii) $\widehat{X} $ is locally compact (see Lemma~\ref{lem:loccompequi}),\\
(iii) $\widehat{X} $ is $\al/\e$-regular for some~$\al > 1$ (see Lemma~\ref{lem:loccompactequi}), and\\
(iv) $\widehat{X} $ admits a weak $(1,p)$-Poincar\'{e} inequality for some~$p$ with $p \in [1,\al/\e)$.
\item Show that~$f$ is a $\zeta$-quasisymmetric map (w.r.t.~$d_{\e}$) from~$\widehat{X}$ onto itself for an appropriate homeomorphism~$\zeta: [0,\infty) \rightarrow [0,\infty)$. (See Lemma~\ref{lem:homeoequi}.)
\item Since Theorem~\ref{thm.R1.metric} holds for~$\widehat{X}$, we conclude that~$\widehat{J}_f(x)$ exists and is finite for $\mu$-a.e.~$x$ and~$\log  \widehat{J}_f \in \bmo(\widehat{X})$.
\item Show that $\widetilde{J}_f(x)$ exists and is finite for $\mu$-a.e.~$x$. (See Lemma~\ref{lem:J_f_tilde exist}.)
\item Show that~$\widehat{J}_f$ and~$\widetilde{J}_f$ are comparable. (See Lemma~\ref{lem:Jacoequi}.)
\item Using Steps~3--5, conclude that~$\log  \widetilde{J}_f \in \bmo(\widehat{X})$. (See Section~\ref{subsec:logJacoequi}.) 
\item Show that $\bmo(\widetilde{X}) = \bmo(\widehat{X})$. (See Proposition~\ref{prop:BMO_coincide}.) Hence~$\log  \widetilde{J}_f \in \bmo(\widetilde{X})$.
\end{enumerate}
Step~1 is done in Section~\ref{subsec:imply loccom}, Lemma~\ref{lem:loccompequi} and Section~\ref{subsec:imply alregular}, Lemma~\ref{lem:loccompactequi}.
Step~2 is done in Section~\ref{subsec:imply_quasisym}, Lemma~\ref{lem:homeoequi}.
Step~4 is done in Section~\ref{subsec:Jftilde exist}, Lemma~\ref{lem:J_f_tilde exist}.
Step~5 is done in Section~\ref{subsec: Jacoequi}, Lemma~\ref{lem:Jacoequi}.
Step~6 is done in Section~\ref{subsec:logJacoequi}.
Step~7 is done in Section~\ref{subsec:BMO_coincide}, Proposition~\ref{prop:BMO_coincide}. \(\hfill \Box\)
\medskip

A natural question to ask is whether there is any space of homogeneous type to which Corollary~\ref{thm.R1.quasimetric} applies. This will be our last main result. In Example~\ref{thm:construct}, we construct a large class of spaces of homogeneous type which satisfy the hypotheses of Corollary~\ref{thm.R1.quasimetric}.

\begin{restatable}[\textbf{Construction of suitable spaces of homogeneous type}]{exam}{construct}\label{thm:construct}
Let $(X, D, \mu)$  be a metric measure space such that\\
\indent \textup{(i)} $\mu$ is a locally finite Borel-regular measure with dense support,\\
\indent \textup{(ii)} $(X, D, \mu)$ is locally compact,\\
\indent \textup{(iii)} $(X, D, \mu)$ is $Q$-regular for some~$Q > 1$, and \\
\indent \textup{(iv)} $(X, D, \mu)$ admits a weak $(1,p)$-Poincar\'{e} inequality for some~$p \in [1,Q)$.\\
Fix~$\beta \geq 1$.
Define~$\rho (x,y) := D(x,y)^{\beta}$ for all~$x, y \in X$.
Then the space~$(X,\rho,\mu)$ satisfies the conditions~(a)--(d) of Corollary~\ref{thm.R1.quasimetric} with~$\e := 1/\beta$ and with~$\al := Q\e$ in condition~(c).
\end{restatable}
From the space of homogeneous type~$(X,\rho,\mu)$ constructed in Example~\ref{thm:construct}, we obtain a metric measure space~$(X,d_{\e},\mu)$ via the $\e$-chain approach.
In general, it is not known whether there is a nice characterisation of spaces of homogeneous type where the modified metric~$d_{\e}$ supports a weak $(1,p)$-Poincar\'{e} inequality.
Thus, besides providing a class of spaces of homogeneous type for which our Corollary~\ref{thm.R1.quasimetric} holds, our construction is also interesting in terms of addressing the issue mentioned above.

\medskip
\noindent \emph{Outline proof of Example~\ref{thm:construct}.}
The proof of Example~\ref{thm:construct} consists of four main steps, which are outlined below.
  \begin{enumerate}
  \item Start with the metric measure space~$(X, D, \mu)$ as stated in Example~\ref{thm:construct}.
  \item Fix~$\beta \geq 1$ and
define~$\rho (x,y) := D(x,y)^{\beta}$ for all~$x, y \in X$.
Show that~$\rho$ is a quasimetric with quasitriangle constant~$A_0 = 2^{\beta -1}$. (See Lemma~\ref{lem:construct1}.)
  \item Fix~$\e = 1/\beta$. Define~$d_{\e}(x,y)$ from~$\rho$ by the $\e$-chain approach.
  \item So far, we have constructed a space of homogeneous type~$(X,\rho,\mu)$ and a metric measure space~$(X,d_{\e},\mu)$. Finally, show that they satisfy the hypotheses of Corollary~\ref{thm.R1.quasimetric}.
\end{enumerate}
The detailed proofs of Lemma~\ref{lem:construct1} and Step~4 are presented in Section~\ref{sec:construction_X}.
\(\hfill \Box\)
\medskip

Besides the four main results explained above, we would like to emphasise some other results which are proved below in the setting of spaces of homogeneous type~$(X,\rho,\mu)$. They are not only used to prove our main result but are also of interest in their own right.
\begin{enumerate}
  \item Theorems~\ref{CZ} and~\ref{CZ2}: Calder\'{o}n--Zygmund decomposition,
  \item Lemma~\ref{thm3}: a dyadic reverse-H\"{o}lder weight is also a dyadic~$A_p$ weight,
  \item Lemma~\ref{lem:nu_dbling}: distortions of sets under an $\eta$-quasisymmetric map,
  \item Theorem~\ref{thm:vitali in X}: Vitali Covering Theorem, and
  \item Theorem~\ref{lem:Nages analog}: Radon--Nikodym Theorem.
\end{enumerate}

Throughout the paper, metrics are denoted by~$d$, $d_{\e}$ or~$D$ and quasimetrics are denoted by~$\rho$. (Metric) balls are denoted by~$\widehat{B}$ and quasiballs are denoted by~$\widetilde{B}$.
We use the usual notation
$\intav_{E} \,d\mu = \frac{1}{\mu(E)}\int_{E} \,d\mu$, where $E \subset X$.
Given a weight~$w \in L^1_{\text{loc}}(X)$, for each $\mu$-measurable subset~$E$ of~$X$ we define $w(E) := \int_{E} \,dw = \int_{E} w \,d\mu$.
We denote by~$C$ a positive constant that is independent of the main parameters but may vary from line to line.
If $f\le Cg$, we write $f\ls g$ or $g\gs f$; and if $f \ls g\ls
f$, we  write $f\sim g$, or~$f \sim_C g$ when we want to emphasise the constant.

This paper is organised as follows. In Section \ref{sec:defns}, we present the mathematical concepts needed later in the paper. This section includes proofs of some new results.
In Section~\ref{sec:logBMO}, we prove Theorem~\ref{thm.logBMO}.
In Section \ref{sec:quasi_result}, we present results which hold on spaces of homogeneous type. They include the results that the measure induced by a quasisymmetric map is doubling and Borel regular, the Vitali Covering Theorem and the Radon--Nikodym Theorem. These results will be used in the later sections.
In Section \ref{sec:R1_mec}, we prove Theorem~\ref{thm.R1.metric}, which is Reimann's Theorem 1 generalised to metric measure spaces $(X,d, \mu)$. In Section \ref{sec:R1_quasimec}, we prove Corollary~\ref{thm.R1.quasimetric}, which is Reimann's Theorem 1 generalised to  spaces of homogeneous type $(X,\rho, \mu)$. The construction of a large class of spaces of homogeneous type (Example~\ref{thm:construct}) for which our results hold is carried out in Section~\ref{sec:construction_X}.



\section{Background and Preliminaries}\label{sec:defns}
This section is organised as follows.
In Section~\ref{subsec:bmo_qsconformal}, we define two central concepts: the function space~$\bmo$ and quasiconformal mappings both in the Euclidean setting.
In Section \ref{subsec:spaceX}, we introduce metric measure spaces~$(X,d,\mu)$ and spaces of homogeneous type~$(X,\rho,\mu)$. In Section \ref{subsec:dyadiccubesss}, we explain systems of dyadic cubes and collections of adjacent systems of dyadic cubes.
In Section~\ref{subsec:dbl_dydbl_weight}, we introduce the concept of doubling and dyadic doubling weights.
In Section~\ref{subsec:BMO_X}, we define the function space~$\bmo$ on  metric measure spaces~$(X,d,\mu)$ and spaces of homogeneous type~$(X,\rho,\mu)$.
In Section \ref{subsec:quasisymmetic}, we define quasisymmetric maps defined on metric spaces~$(X,d)$ and quasimetric spaces~$(X,\rho)$ as well as their generalised Jacobians.
In Section~\ref{sec:weight}, we review the~$A_p$ weights and reverse-H\"{o}lder weights.
In Section~\ref{sec:meable func}, we define measurable functions and establish some of their properties.
In Section~\ref{sec:A_infty measure}, we discuss $A_{\infty}$ related measures and some of their properties.
For more detail on this material, see \cite{CW71}, \cite{HK98}, \cite{HK12}, \cite{KLPW16}, \cite{Tys98}, \cite{Geh73}, \cite{Hei01} and~\cite{Fol99}.
\subsection{The function space $\bmo$, and quasiconformal mappings, on~$\R^n$}\label{subsec:bmo_qsconformal}

\begin{defn}\label{def:BMO_R}
    A locally integrable real-valued function $f: \mathbb{R}^n
    \rightarrow \mathbb{R}$ is said to be of \emph{bounded mean
    oscillation}, written $f \in \bmo$ or $f \in \bmo(\mathbb{R}^n)$,
    if
    \[
        \|f\|_{\bmo}
        := \sup_Q\frac{1}{|Q|}\int_Q \lf|f(x) - f_Q\r| \,dx < \infty,
    \]
    where
    $f_Q = \frac{1}{|Q|}\int_Q f(y) \,dy$
    is the average of the function~$f$ over the cube~$Q$. Here~$Q$
    denotes a cube in $\mathbb{R}^n$ with edges parallel to the coordinate axes, and~$|Q|$ denotes the Lebesgue measure of~$Q$.
\end{defn}

\begin{defn}\label{def:quasiconformal}\cite{Rei74} (\textbf{Quasiconformal mapping})
A \emph{$K$-quasiconformal mapping} is a homeomorphism $f: G \rightarrow \mathbb{R}^n$ such that $f$ is absolutely continuous on lines, $f$ is totally differentiable almost everywhere, and there is a constant $K$ such that
\begin{equation}\label{0.1}
\sup_{\xi \in \mathbb{R}^n, |\xi| = 1}|F(x)\xi|^n \leq KJ_f(x) \text{ a.e.},
\end{equation}
where $G \subset \mathbb{R}^n$, $F(x)$ is the Jacobian matrix of $f$ at $x$ and $J_f(x)$ is the Jacobian determinant of $F(x)$.
\end{defn}

\subsection{Metric spaces~$(X,d,\mu)$ and spaces of homogeneous type~$(X,\rho,\mu)$}\label{subsec:spaceX}
In this section, we define metrics, quasimetrics and doubling measures, which let us define metric measure spaces and spaces of homogeneous type. We also describe some related concepts such as Borel-regularity, geometrically doubling, local compactness, $\al$-regularity and the $\e$-chain approach.

\begin{defn}(\textbf{Metric}) A \emph{metric} on a set $X$ is a function $d: X\times X \rightarrow [0, \infty)$ satisfying the following conditions for all $x, y, z\in X$:
\begin{eqnarray*}
  d(x,y) &=& 0 \quad \text{ if and only if } \quad x=y, \\
  d(x,y) &=& d(y,x), \\
  d(x,z) &\leq& d(x,y) + d(y,z).
\end{eqnarray*}
\end{defn}

The pair~$(X,d)$ is called a \emph{metric space}.
The metric~$d$ can be used to define balls, diameters of subsets of $X$, distances from a point to a subset and distances between subsets:
\begin{eqnarray*}
  \widehat{B}(x,r) &:=& \{y \in X: d(x,y) < r\}, \hspace{0.6cm} x \in X, r>0, \\
  \dia A &:=& \sup_{x,y \in A} d(x,y),\hspace{0.6cm} A \subset X, \\
  d(x,A) &:=& \inf_{y \in A} d(x,y), \hspace{0.6cm} x \in X, A \subset X, \\
  d(A,B) &:=&  \inf_{x \in A, y \in B} d(x,y), \hspace{0.6cm} A, B \subset X.
\end{eqnarray*}

A \emph{quasimetric} on a set~$X$ is a function~$\rho: X\times X \rightarrow [0, \infty)$ satisfying the same conditions as a metric, excepted that the triangle inequality is replaced by a \emph{quasitriangle inequality}:
\[\rho(x,z) \leq A_0\rho(x,y) + A_0d\rho(y,z),\]
where the \emph{quasitriangle constant} $A_0 \geq 1$ does not depend on $x, y$ or $z$.

The pair~$(X,\rho)$ is called a \emph{quasimetric space}.
As with a metric, a quasimetric can be used to define quasiballs~$\widetilde{B}(x,r)$, diameters~$\dia A$ of subsets of $X$, distances~$\rho(x,A)$ from a point to a subset and distances~$\rho(A,B)$ between subsets; here the metric~$d$ is replaced by the quasimetric~$\rho$.
In this paper we use the term \emph{quasiball} to mean a ball defined with respect to a quasimetric. To avoid any confusion, we note that in the literature the term quasiball can also denote the image of a ball under a quasiconformal or quasisymmtric map.



In addition to a metric, we need a doubling measure that is consistent with the chosen metric.
\begin{defn}\label{defn:dbl_measure} \cite{CW71} (\textbf{Doubling measure}) A \emph{doubling measure} on the space $(X, d)$ is a measure $\mu$  on $X$ such that the balls in $(X, d)$ are $\mu$-measurable sets, and the following condition holds for all $x \in X$ and all  $r > 0$:
\begin{equation}\label{eq:dbl_measure_1}
0 < \mu(\widehat{B}(x, 2r)) \leq A_1 \mu(\widehat{B}(x, r)) < \infty,
\end{equation}
where the \emph{doubling constant} $A_1 \geq 1$ does not depend on $x$ and $r$.
\end{defn}

In fact, inequality~\eqref{eq:dbl_measure_1} implies a more general property of the doubling measure~$\mu$. Namely, for all~$x \in X$, $r > 0$ and~$\lambda > 1$ we have
\begin{equation}\label{eq:dbl_measure_2}
    \mu(\widehat{B}(x, \lambda r)) \leq A_1^{1 + \log_2 \lambda} \mu(\widehat{B}(x, r)).
\end{equation}

A doubling measure on a quasimetric space~$(X,\rho)$ is defined in the same way, except the ball~$\widehat{B}$ is replaced by the quasiball~$\widetilde{B}$.
When a metric space~$(X,d)$ is equipped with a doubling measure~$\mu$, the triple~$(X,d,\mu)$ is called a \emph{metric measure space}.
When a quasimetric space~$(X,\rho)$ is equipped with a doubling measure~$\mu$, the triple~$(X,\rho,\mu)$ is called a \emph{space of homogeneous type}.

\begin{defn} \cite{CW71} (\textbf{Space of homogeneous type})\label{defn:(X,rho,mu)} A \emph{space of homogenous type} is a triple $(X, \rho, \mu)$ where $X$ is a nonempty set, $\rho$ is a quasimetric on $X$  and $\mu$ is a doubling measure on the space $(X, \rho)$.
\end{defn}
Following~\cite{Chr55}, we assume that the measure~$\mu$ is defined on a $\sigma$-algebra $\mathcal{M}$ which contains all Borel sets and all quasiballs~$\widetilde{B} \subset X$. A set~$E \subset X$ is $\mu$-\emph{measurable} if $E \in \mathcal{M}$.

Sometimes, we also require the measure~$\mu$ on the metric measure space~$(X,d,\mu)$ or on the space of homogeneous type~$(X,\rho,\mu)$ \emph{Borel regular}.
The measure~$\mu$ is Borel regular if for all Borel sets~$E \subset X$ we have
$$\mu(E)
 =\sup \{\mu(V): V \text{ closed}, V \subset E\}
 = \inf \{\mu(U): U \text{ open}, E \subset U\}.$$

We note that every space of homogeneous type is \emph{geometrically doubling} \cite{CW71}, meaning that there exists $N$ such that every quasiball $\widetilde{B}(x,r)$ can be covered by at most $N$ balls of radius~$r/2$.

\begin{defn}\label{def:compact}\cite{Mun74,Fol99} (\textbf{Locally compact spaces})
A metric space $(X,d)$ is \emph{locally compact} if for each $x \in X$ there exists an open set~$O$ w.r.t.~$d$ and a compact set~$K$ w.r.t.~$d$ such that $x \in O \subset K$.
\end{defn}

\begin{defn}\label{def:alpha}(\textbf{$\alpha$-regular spaces})
A metric space $X$ endowed with a doubling measure $\mu$ is an \emph{Ahlfors-regular space of dimension $\alpha$ }(for short, an \emph{$\alpha$-regular space}) if there exists a constant $\kappa \geq 1$ so that for every ball $\widehat{B}_r$ in $X$ with radius $r < \dia X$, we have
$\kappa^{-1}r^{\alpha} \leq \mu(\widehat{B}_r) \leq \kappa r^{\alpha}.$
\end{defn}
The notion of regularity of measures was first introduced by Ahlfors for $\al =1$ in~\cite{Ahl35}.
It was then explicitly identified and studied by David and Semmes for general~$\al$.
Some of the results of their study  related to Ahlfors regularity are collected in~\cite{DS93}.

Local compactness and $\al$-regularity for spaces of homogeneous type~$(X,\rho,\mu)$ are defined as in Definitions~\ref{def:compact} and~\ref{def:alpha} above, except that the metric~$d$ and the ball~$\widehat{B}_r$ are replaced by the quasimetric~$\rho$ and the quasiball~$\widetilde{B}_r$, respectively.

Given a quasimetric~$\rho$, it turns out that one can construct an metric~$d_{\e}$, depending on a constant~$\e \in (0,1]$, which is comparable to~$\rho_{\e}$, where~$\rho_{\e}(x,y) := \rho(x,y)^{\e}$ for all~$x,y \in X$.
That is, there exists a constant~$C_{\e}$ independent of~$x$ and~$r$ such that for all~$x, y \in X$ we have
\begin{equation}\label{eq:Hei1}
  \Cc^{-1}\rho_{\e}(x,y) \leq d_{\e}(x,y) \leq \Cc\rho_{\e}(x,y).
\end{equation}
The question of finding an appropriate~$\e$ such that~$\eqref{eq:Hei1}$ holds has been investigated by a number of authors. For example, in the proof of Theorem~2 in~\cite{MS79}, it is shown that~$\e$ can be chosen such that~$(3A_0^2)^{\e} = 2$, where~$A_0 \geq 1$ is the quasitriangle constant.
In the proof of Proposition~14.5 in~\cite{Hei01}, $\e$ can be chosen so that~$(2A_0)^{2\e} \leq 2$. In~\cite[Section~2]{PS09}, $\e$ is given by~$(2A_0)^{\e} = 2$.
We describe the construction in~\cite{PS09}, as it will be used in Section~\ref{sec:construction_X}. In~\cite{PS09}, $d_{\e}$ is produced via the so-called~\emph{$\e$-chain approach}.
\begin{defn}\label{def:e chain}
Let~$(X,\rho)$ be a quasimetric space.
Let~$\rho_{\e}(x,y) := \rho(x,y)^{\e}$ for all~$x,y \in X$.
Given~$\e \in (0,1]$,
define the function~$d_{\e}: X \times X \rightarrow [0,\infty)$ by
\begin{equation}\label{eq:pchain}
  d_{\e}(x,y) := \inf \bigg\{ \sum_{i=0}^{n} \rho_{\e}(x_{i},x_{i+1}): x = x_0,x_1, \ldots, x_n = y, \quad n \geq 1 \bigg\},
\end{equation}
The above process of producing~$d_{\e}$ from~$\rho$ is called the~\emph{$\e$-chain approach}.
\end{defn}
Recall that~$\rho_{\e}$ is also known as the \emph{snowflaking} of the quasimetric~$\rho$.
With~$\e$ chosen properly, $d_{\e}$ becomes a metric, and is comparable to the snowflaking~$\rho_{\e}$.
\begin{thm}\cite[Section~2]{PS09}\label{prop:chain_apporoach}
    Let~$(X,\rho)$ be a quasimetric space and let~$\e$ such that~$0< \e \leq 1$ be determined by~$(2A_0)^{\e} = 2$, where~$A_0$ is the quasitriangle constant. Then the function~$d_{\e}$ obtained from~$\rho$ by the $\e$-chain approach is a metric on~$X$ and is comparable to~$\rho_{\e}$.
\end{thm}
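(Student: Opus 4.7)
The plan is to verify the metric axioms for~$d_{\e}$ and then to establish the two-sided comparability with~$\rho_{\e}$. Nonnegativity is immediate from~\eqref{eq:pchain}. Symmetry follows because reversing an admissible chain $x_0,\ldots,x_n$ preserves the sum. The triangle inequality $d_{\e}(x,z)\le d_{\e}(x,y)+d_{\e}(y,z)$ comes from concatenating near-optimal chains from $x$ to $y$ and from $y$ to $z$ and passing to the infimum. Finally, the trivial two-point chain yields the easy half of the comparability, $d_{\e}(x,y)\le\rho_{\e}(x,y)$, and in particular $d_{\e}(x,x)=0$.

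The heart of the proof is the matching lower bound, which at once provides the other half of the comparability and the nondegeneracy $d_{\e}(x,y)=0\Rightarrow x=y$. Specifically, I would prove a Frink-type inequality: there exists a constant $C=C(A_0,\e)>0$ such that for every finite chain $x=x_0,x_1,\ldots,x_n=y$ in~$X$,
\begin{equation*}
\rho(x,y)^{\e}\le C\sum_{i=0}^{n-1}\rho(x_i,x_{i+1})^{\e}.
\end{equation*}
Taking the infimum over chains then gives $\rho_{\e}(x,y)\le C\,d_{\e}(x,y)$, which together with the easy upper bound establishes~\eqref{eq:Hei1} with $\Cc:=\max(1,C)$ and forces $d_{\e}(x,y)=0\Rightarrow\rho(x,y)=0\Rightarrow x=y$.

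I would prove the displayed inequality by strong induction on the chain length~$n$. The base case $n=1$ is trivial with constant~$1$. For $n\ge 2$, set $S:=\sum_{i=0}^{n-1}\rho(x_i,x_{i+1})^{\e}$ and choose the largest index~$k$ with $\sum_{i=0}^{k-1}\rho(x_i,x_{i+1})^{\e}\le S/2$; by maximality the tail sum $\sum_{i=k+1}^{n-1}\rho(x_i,x_{i+1})^{\e}$ is strictly less than $S/2$, while the bridging term $\rho(x_k,x_{k+1})^{\e}$ is at most~$S$. Apply the inductive hypothesis to the two subchains, combine using two applications of the quasitriangle inequality,
\begin{equation*}
\rho(x_0,x_n)\le A_0\,\rho(x_0,x_k)+A_0^{2}\,\rho(x_k,x_{k+1})+A_0^{2}\,\rho(x_{k+1},x_n),
\end{equation*}
and invoke the subadditivity $(a+b+c)^{\e}\le a^{\e}+b^{\e}+c^{\e}$ valid for $\e\in(0,1]$.

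The main obstacle is calibrating the constant $C$ so that the induction actually closes. A naive combination of the three estimates above produces a recursion whose fixed point is finite only when a specific polynomial expression in $A_0^{\e}$ is less than~$2$. The calibration $(2A_0)^{\e}=2$, equivalently $A_0^{\e}=2^{1-\e}$, is engineered precisely so that, after refining the splitting (for instance by also applying the inductive hypothesis to the subchain containing the bridging edge when this is beneficial) and carefully tracking which portion of $S$ is absorbed at each step, the fixed-point condition is met with a finite $C=C(A_0,\e)$. This bookkeeping of constants, in the spirit of Frink's metrization theorem, is the delicate part of the argument; once it is in hand, the remaining steps all follow immediately from the definition~\eqref{eq:pchain} of~$d_{\e}$.
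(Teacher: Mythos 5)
The paper states this result as a citation to \cite[Section~2]{PS09} and does not reproduce the argument, so the relevant comparison is with the proof there (which is essentially Frink's metrization lemma, specialised to the snowflake $\rho_{\e}$). Your high-level strategy --- verify the metric axioms, get the trivial bound $d_{\e}\le\rho_{\e}$ from the one-edge chain, and derive the matching lower bound from a Frink-type chain inequality, with nondegeneracy falling out of the lower bound --- is exactly the right outline and is the one \cite{PS09} follows.

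The concrete inductive step you propose does not close, and the problem is structural rather than a calibration you have merely not finished. With IH constant $C$, your half-mass split gives $\sum_{i<k}\rho_{\e}(x_i,x_{i+1})\le S/2$ and $\sum_{i>k}\rho_{\e}(x_i,x_{i+1})< S/2$ with $S$ the total, and two applications of the quasitriangle inequality followed by $\e$-subadditivity yield
\begin{equation*}
\rho(x_0,x_n)^{\e}\;\le\;A_0^{\e}\,\frac{CS}{2}\;+\;A_0^{2\e}\,S\;+\;A_0^{2\e}\,\frac{CS}{2}.
\end{equation*}
Closing the induction therefore requires $C\bigl(1-\tfrac12(A_0^{\e}+A_0^{2\e})\bigr)\ge A_0^{2\e}$. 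Under $(2A_0)^{\e}=2$, i.e.\ $A_0^{\e}=2^{1-\e}$, set $u:=2^{-\e}$; then $1-\tfrac12(A_0^{\e}+A_0^{2\e})=1-u-2u^{2}=-(2u-1)(u+1)$, which is $\le 0$ for every $u\ge1/2$, that is, for every $\e\in(0,1]$, with equality only at $\e=1$. So there is no finite $C$ for which your recursion closes, for any admissible $\e$. The obstacle you name in your final paragraph is real, but the ``refinement'' you gesture at (also invoking the IH on the subchain containing the bridging edge, tracking absorption) is not an argument; and no choice of split index rescues the additive route, because the quasitriangle inequality for $\rho$ combined with $\e$-subadditivity systematically overpays by a factor $A_0^{2\e}$ on the bridging edge.

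What the calibration $(2A_0)^{\e}=2$ actually buys is not a sharpened additive inequality but a two-ultrametric one. From $\rho(x,z)\le A_0(\rho(x,y)+\rho(y,z))\le 2A_0\max\{\rho(x,y),\rho(y,z)\}$, raising to the power $\e$ and using $(2A_0)^{\e}=2$ gives
\begin{equation*}
\rho_{\e}(x,z)\;\le\;2\,\max\{\rho_{\e}(x,y),\,\rho_{\e}(y,z)\}\qquad\text{for all }x,y,z\in X.
\end{equation*}
Frink's lemma is the statement that any symmetric nonnegative $\sigma$ vanishing on the diagonal and satisfying this \emph{max}-form inequality obeys the chain estimate $\sigma(x_0,x_n)\le 4\sum_i\sigma(x_i,x_{i+1})$; the induction in the proof is run against the max-form inequality (with its own case analysis on which side dominates), not against the additive quasitriangle. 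That is the argument \cite{PS09} invokes, and it gives $d_{\e}\le\rho_{\e}\le 4\,d_{\e}$. Your proposal needs to be rebuilt around that inequality; as written the key inductive estimate is unproved, and the route you sketch cannot be patched by a better split or constant.
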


\begin{defn}\label{Xspace}
 We say a space of homogeneous type $(X,\rho,\mu)$ has \emph{nonempty $\tau$-annuli} if there exists $\tau \in (0,1)$ such that
 for each $x \in X$ and for each $r>0$, $\widetilde{B}(x,r)\backslash \widetilde{B}(x,\tau r)$ is nonempty whenever $X \backslash \widetilde{B}(x,r)$ is nonempty.
\end{defn}
The notion of nonempty  $\tau$-annuli is also called \emph{uniform perfectness} in standard literature on topology, dynamics, geometric measure theory, and analysis in metric spaces.

If $(X,\rho,\mu)$ has nonempty $\tau$-annuli, it follows from Definition~\ref{Xspace} that for all $\widetilde{B}(x,r) \subset X$ with $X \backslash \widetilde{B}(x,r) \neq \emptyset$ we have $\sup_{y \in \widetilde{B}(x,r)} \rho(x,y) > \tau r.$

\begin{prop}\label{prop:nonempty_annuli}
  If~$(X,\rho,\mu)$ is $\al$-regular with constant~$\kappa$, then~$X$ has nonempty $\tau$-annuli property for all~$\tau \in (0,\kappa^{-2/\al})$.
\end{prop}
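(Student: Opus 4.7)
The plan is to argue by contradiction. Suppose there exist $x \in X$ and $r > 0$ with $X \setminus \widetilde{B}(x,r) \neq \emptyset$ but $\widetilde{B}(x,r) \setminus \widetilde{B}(x,\tau r) = \emptyset$. Since $\tau < 1$ gives $\widetilde{B}(x,\tau r) \subset \widetilde{B}(x,r)$ automatically, the vanishing of the annulus forces the set equality $\widetilde{B}(x,r) = \widetilde{B}(x,\tau r)$ and, in particular, equality of their $\mu$-measures. The goal is then to show that this equality is incompatible with the Ahlfors regularity hypothesis when $\tau$ is strictly smaller than $\kappa^{-2/\alpha}$.

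The existence of a point $y \in X \setminus \widetilde{B}(x,r)$ witnesses $\rho(x,y) \geq r$, so $r \leq \dia X$. In the main case $r < \dia X$, both radii $r$ and $\tau r$ are permitted by Definition~\ref{def:alpha}, and I apply $\alpha$-regularity directly to obtain
\[
\kappa^{-1} r^\alpha \leq \mu(\widetilde{B}(x,r)) \quad \text{and} \quad \mu(\widetilde{B}(x,\tau r)) \leq \kappa (\tau r)^\alpha.
\]
For the borderline case $r = \dia X$, I recover the lower bound by writing $\widetilde{B}(x,r) = \bigcup_{n \geq 1} \widetilde{B}(x, r - 1/n)$, applying the lower estimate to each $\widetilde{B}(x, r - 1/n)$ (whose radius is strictly less than $\dia X$), and passing to the limit via continuity of $\mu$ along the increasing union; the upper bound on $\mu(\widetilde{B}(x,\tau r))$ is unaffected since $\tau r < \dia X$.

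Combining these two estimates with the assumed equality of the quasiballs yields
\[
\kappa^{-1} r^\alpha \leq \mu(\widetilde{B}(x,r)) = \mu(\widetilde{B}(x,\tau r)) \leq \kappa \tau^\alpha r^\alpha,
\]
which simplifies to $\tau^\alpha \geq \kappa^{-2}$, i.e. $\tau \geq \kappa^{-2/\alpha}$, contradicting the standing assumption $\tau \in (0, \kappa^{-2/\alpha})$. No step in this plan is substantive; the only mild subtlety is the edge case $r = \dia X$ lying just outside the range covered by the definition of $\alpha$-regularity, and the limiting argument above handles it cleanly.
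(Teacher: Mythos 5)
Your argument is correct and essentially matches the paper's: both rely on the two-sided Ahlfors-regularity bounds $\kappa^{-1} r^\alpha \leq \mu(\widetilde{B}(x,r))$ and $\mu(\widetilde{B}(x,\tau r)) \leq \kappa\tau^\alpha r^\alpha$ together with the observation that $\tau < \kappa^{-2/\alpha}$ forces $\kappa^{-1} - \kappa\tau^\alpha > 0$; the paper concludes directly by computing $\mu(\widetilde{B}(x,r)\setminus\widetilde{B}(x,\tau r)) \geq (\kappa^{-1}-\kappa\tau^\alpha)r^\alpha > 0$, while you run the equivalent contradiction argument. Your added treatment of the boundary case $r = \dia X$ (which the paper's proof leaves implicit, and where the $\alpha$-regularity estimate does not literally apply) is a small refinement of the same method, not a different route.
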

\begin{proof}
  Take~$\tau \in (0,\kappa^{-2/\al})$. Note that $\tau < \kappa^{-2/\al}$ implies that~$\kappa^{-1} - \kappa \tau^{\al} > 0$.
Take~$\widetilde{B}(x,r) \subset X$. Then
\[\kappa^{-1} r^{\al} \leq \mu(\widetilde{B}(x,r)) \leq \kappa r^{\al}, \quad \text{and} \quad \kappa^{-1}\tau^{\al} r^{\al} \leq \mu(\widetilde{B}(x,\tau r)) \leq \kappa \tau^{\al}r^{\al}.\]
Thus Proposition~\ref{prop:nonempty_annuli} follows as
\begin{equation*}
  \mu(\widetilde{B}(x,r)\backslash \widetilde{B}(x,\tau r)) = \mu(\widetilde{B}(x,r))- \mu( \widetilde{B}(x,\tau r)) \geq
   (\kappa^{-1} - \kappa \tau^{\al})r^{\al} > 0.\qedhere
\end{equation*}
\end{proof}

On a metric space~$(X,d)$,
a curve $\gamma: I \rightarrow X$ is \emph{rectifiable} if its length is finite:
$l(\gamma) := \sup l(\gamma|J) < \infty,$
where the supremum is taken over all closed subintervals $J$ of $I$.
Let $U$ be an open set in $X$ and let $u$ be an arbitrary real-valued function on $U$. We say that the Borel function $\varrho: U \rightarrow [0,\infty]$ is a \emph{very weak gradient (upper gradient)} of $u$ in $U$ if for each rectifiable curve $\gamma_{xy}$ joining two points $x$ and $y$ in $U$,
\[|u(x) - u(y)| \leq \int_{\gamma_{xy}} \varrho \,ds.\]

\begin{defn}\label{pc}\cite{HK98} (\textbf{Rectifiably connected})
The metric space~$(X,d)$ is called \emph{rectifiably connected} if every pair of points in $X$ can be joined by a rectifiable curve.
\end{defn}

\begin{defn}\label{pc}\cite{HK98} (\textbf{Poincar\'{e} inequality})
Let $p \geq 1$ be a real number. We say that a metric measure space $(X,d,\mu)$ admits a \emph{weak $(1,p)$-Poincar\'{e} inequality} if there are constants $C_p \geq 1$ and $C_0 \geq 1$ such that
\begin{equation}\label{Pointcare1}
  \intav_{B} |u-u_B| \,d\mu \leq C_p(\dia B)\bigg(\intav_{C_0B} \varrho^p \,d\mu\bigg)^{1/p},
\end{equation}
whenever $u$ is a bounded continuous function on a ball $C_0B$ and $\varrho$ is its very weak gradient there. The constants should be independent of $B$ and $u$.
\end{defn}

\begin{rem}\label{rem:recti_connect}
It is known that if a metric measure space~$(X,d,\mu)$ is complete and admits a weak $(1, p)$–Poincar\'{e} inequality,
then~$X$ is \emph{quasiconvex}, meaning there exists a constant~$C$ such that every pair of points in $x, y \in X$ can be joined by a rectifiable curve~$\gamma$ such that
$\ell(\gamma) \leq Cd(x, y)$ \cite{Che99,Kei03}.
This forces the space to be rectifiably connected.

Moreover, if~$X$ is assumed to be locally compact instead of being complete, then~$X$ is still quasiconvex, and so rectifiably connected.
This is because in the proof of the quasiconvexity of~$X$, the assumption that~$X$ is complete is used only to guarantee that closed balls of finite radius are compact~\cite{Che99}. This can also be achieved by the local compactness of~$X$.

We will use this remark later to invoke the result of~\cite{HK98} that we use in our proofs.
\end{rem}

\subsection{Dyadic cubes in~$(X,\rho)$}\label{subsec:dyadiccubesss}
Since the proofs of Reimann's theorems involve the use of cubes in $\R^n$, we need an equivalent theory of cubes in quasimetric spaces~$(X,\rho)$.
In this section, we recall the construction systems of dyadic cubes, adjacent systems of dyadic cubes and their related properties.
This construction is originally developed in~\cite{HK12}. We present here the (slightly reworded) version that appears in~\cite[Section~2]{KLPW16}.
For the history of the development of systems of dyadic cubes, and collection of such systems which generalise the ``one-third trick'', see \cite{HK12} and the references therein, especially \cite{Chr55} and \cite {SaWh}.


\begin{defn}\label{dyadiccubes}\cite{KLPW16} (\textbf{A system of dyadic cubes}) In a geometrically doubling quasimetric space $(X, \rho)$, a countable family
\[\mathscr{D} = \bigcup_{k \in \mathbb{Z}} \mathscr{D}_k, \hspace{0.6cm} \mathscr{D}_k = \{Q_{\alpha}^k: \alpha \in \mathscr{A}_k\},\]
of Borel sets $Q_{\alpha}^k \subset X$ together with a fixed collection of countably many points $x_{\alpha}^k$ in $X$, with $x_{\alpha}^k \in Q_{\alpha}^k$ for each $k \in \mathbb{Z}$ and each $\alpha \in \mathscr{A}_k$,
is called a \emph{system of dyadic cubes with parameters $\delta \in (0,1)$ and $c_1$ and $C_1$} such that $0 < c_1 < C_1 < \infty$ if it has the following properties.
\begin{align}
  &1. \text{ } X = \bigcup_{\alpha \in \mathscr{A}_k} Q_{\alpha}^k \text{ (disjoint union) for all } k \in \mathbb{Z}.\label{a1}\\
  &2. \text{ If }l \geq k, \text{ then either } Q_{\beta}^l \subset Q_{\alpha}^k \text{ or } Q_{\alpha}^k \cap Q_{\beta}^l = \emptyset.\hspace{5cm} \label{a2}\\
  &3. \text{ } \widetilde{B}(x_{\alpha}^k, c_1\delta ^k) \subset Q_{\alpha}^k \subset \widetilde{B}(x_{\alpha}^k, C_1\delta ^k) =: B(Q_{\alpha}^k).\label{a5}\\
 &4. \text{ If } l \geq k \text{ and } Q_{\beta}^l \subset Q_{\alpha}^k, \text{ then } B(Q_{\beta}^l) \subset B(Q_{\alpha}^k).\label{a6}
 \end{align}
 \hspace{0.5cm} 5. For each $(k, \alpha)$ and each $l \leq k$, there exists a unique $\beta$ such that
 \begin{equation}\label{a3}
 Q_{\alpha}^k \subset  Q_{\beta}^l.
\end{equation}
 \hspace{0.6cm}6. For each $(k, \alpha)$ there exist between 1  and $M$ (a fixed geometric constant) cubes $Q_{\beta}^{k+1}$ such that \\
 \begin{equation}\label{a4}
  Q_{\beta}^{k+1} \subset Q_{\alpha}^k, \text{ and } Q_{\alpha}^k = \bigcup_{Q \in \mathscr{D}_{k+1}, Q\subset Q_{\alpha}^k} Q.
  \end{equation}
The set $Q_{\alpha}^k$ is called a \emph{dyadic cube of generation $k$} with \emph{center point} $x_{\alpha}^k \in Q_{\alpha}^k$ and \emph{side length}~$\delta ^k.$
\end{defn}


\begin{thm}\label{klp}\textbf{(Theorem 2.1 in \cite{KLPW16})}
Let $(X, \rho)$ be a geometrically doubling quasimetric space. Then there exists a system~$\D$ of dyadic cubes with parameters $0 < \delta \leq (12A_0^3)^{-1}$ and $c_1 = (3A_0^2)^{-1}$, $C_1 = 2A_0.$ The construction only depends on some fixed set of countably many center points $x_{\alpha}^k$, satisfying the two inequalities
\begin{equation*}
  \rho (x_{\alpha}^k, x_{\beta}^k) \geq \delta ^k \hspace{0.5 cm} (\alpha \neq \beta),
  \hspace{1.5cm} \min_{\alpha} \rho (x, x_{\alpha}^k) < \delta ^k \text{ for all } x \in X,
\end{equation*}
and a certain partial order $\leq$ among their index pairs $(k, \alpha)$.
\end{thm}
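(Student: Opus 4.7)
The plan is to follow the construction of M.~Christ adapted to quasimetric spaces, as refined by Hyt\"onen and Kairema. For each generation $k \in \mathbb{Z}$, the first step is to fix (via Zorn's lemma) a maximal $\delta^k$-separated set $\{x_\alpha^k\}_{\alpha \in \mathscr{A}_k}$ in $X$. Maximality automatically yields the two displayed inequalities in the theorem: distinct centers at level $k$ are at least $\delta^k$ apart, and every $x \in X$ lies within $\delta^k$ of some $x_\alpha^k$. The index sets $\mathscr{A}_k$ are countable because $(X,\rho)$ is geometrically doubling. Next, for each pair $(l,\beta)$ with $l \geq k$, I would designate a unique ancestor $(k,\alpha)$ among the centers at level $k$ closest to $x_\beta^l$, using a fixed partial order $\leq$ on the index pairs to break ties consistently across generations; this is the partial order referenced in the statement.

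With centers and ancestor assignments in hand, I would define the cubes by grouping descendants under their common ancestors: inductively, set $Q_\alpha^k$ to be the union, over those $\beta$ whose ancestor is $(k,\alpha)$, of the previously defined cubes $Q_\beta^{k+1}$. The combinatorial properties~\eqref{a1}, \eqref{a2}, \eqref{a3}, and~\eqref{a4} of Definition~\ref{dyadiccubes} then follow essentially formally from the ancestor structure. The partition in~\eqref{a1} and the nesting/disjointness in~\eqref{a2} reflect that each point has a unique chain of ancestors; uniqueness of that chain gives~\eqref{a3}; and the bound on the number of children in~\eqref{a4} follows from geometric doubling, since at most $M$ pairwise $\delta^{k+1}$-separated centers can lie within the outer ball $\widetilde{B}(x_\alpha^k, C_1 \delta^k)$.

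The main obstacle is verifying the geometric inclusion~\eqref{a5} with the explicit constants $c_1 = (3A_0^2)^{-1}$ and $C_1 = 2A_0$, together with~\eqref{a6}, under the tightness $\delta \leq (12A_0^3)^{-1}$. For the outer inclusion, any $y \in Q_\alpha^k$ lies in some descendant cube at a generation $k+j$ whose center is within $\delta^{k+j}$ of $y$, and iterating the quasitriangle inequality along the ancestor chain back up to $(k,\alpha)$ produces a bound on $\rho(y, x_\alpha^k)$ which is a geometric series in powers of $A_0 \delta$; the assumption $A_0 \delta \leq (12 A_0^2)^{-1}$ makes the series sum to at most $2 A_0 \delta^k = C_1 \delta^k$. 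For the inner inclusion, suppose $\rho(y, x_\alpha^k) < c_1 \delta^k$; then for any competing center $x_\beta^k$ with $\beta \neq \alpha$, the reverse quasitriangle inequality and the $\delta^k$-separation of the net give
\[
\rho(y, x_\beta^k) \;\geq\; A_0^{-1} \rho(x_\alpha^k, x_\beta^k) - \rho(y, x_\alpha^k) \;\geq\; A_0^{-1}\delta^k - c_1 \delta^k,
\]
which the choice $c_1 = (3A_0^2)^{-1}$ makes strictly larger than the maximum possible distance from $y$ to a descendant of its actual ancestor, forcing $(k,\alpha)$ to be the ancestor of $y$'s fine-scale cube and hence $y \in Q_\alpha^k$. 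Property~\eqref{a6} follows from the same iterated quasitriangle bound applied to the chain from $x_\beta^l$ to $x_\alpha^k$, showing $\widetilde{B}(x_\beta^l, C_1 \delta^l) \subset \widetilde{B}(x_\alpha^k, C_1 \delta^k)$. The specific threshold $\delta \leq (12A_0^3)^{-1}$ is precisely the arithmetic requirement that keeps each of these geometric series summable with the claimed constants $c_1$ and $C_1$; this is where the bookkeeping is most delicate and, in my view, the only genuinely technical part of the argument.
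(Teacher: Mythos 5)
The paper does not prove Theorem~\ref{klp}: it is cited verbatim (with slight rewording) as Theorem~2.1 of~\cite{KLPW16}, and the underlying construction is that of Hyt\"onen and Kairema~\cite{HK12}, which refines Christ's. So there is no in-paper proof to compare against; your sketch is a blind reconstruction of the external source.

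That said, your outline captures the broad shape of the Christ/Hyt\"onen--Kairema construction (maximal $\delta^k$-nets, ancestor chains, cubes as unions of descendants, geometric-series verification of the inner and outer ball inclusions), and the specific constants $c_1 = (3A_0^2)^{-1}$, $C_1 = 2A_0$, $\delta \leq (12A_0^3)^{-1}$ are indeed chosen to close those geometric series. However, two points as written would not survive contact with the details. First, you define the ancestor of $(l,\beta)$ at each level $k \leq l$ directly as a nearest level-$k$ center; the correct relation is defined level by level (choose a parent at level $l-1$, then the parent of that at level $l-2$, and so on) and taken as a transitive closure. A direct nearest-level-$k$ assignment does not compose across generations: the level-$k$ ancestor of the level-$(k+1)$ ancestor of $(l,\beta)$ can differ from the directly-assigned level-$k$ ancestor of $(l,\beta)$, which would break the nesting property~\eqref{a2}. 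Second, your inner-inclusion argument only establishes $\rho(y, x_\beta^k) \geq A_0^{-1}\delta^k - c_1\delta^k$ for $\beta \neq \alpha$, showing that $y$ is closer to $x_\alpha^k$ than to any other level-$k$ center; but membership of $y$ in $Q_\alpha^k$ is determined by the parent chain of the fine-scale center $x_\gamma^l$ near $y$, not by which level-$k$ center $y$ itself is closest to. Closing this gap requires chasing the chain from $x_\gamma^l$ upward, combining the geometric-series bound with the near-maximality of the net, and is more delicate than what you have written. Finally, defining cubes as bare unions of descendant sets does not automatically yield a genuine Borel partition of $X$ at each generation: an assignment of boundary points (or a measure-zero boundary argument, as in Christ's original, versus the half-open conventions in~\cite{HK12}) is needed and is not addressed in your sketch.
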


\begin{defn} \label{cubesinX} \cite{KLPW16} \textbf{(Adjacent Systems of Dyadic Cubes)}
In a geometrically doubling quasimetric space $(X, \rho)$, a finite collection $\{\mathscr{D}^t: t = 1,2,\ldots,T\}$ of families $\mathscr{D}^t$ is called a \emph{collection of adjacent systems of dyadic cubes with parameters $\delta \in (0,1)$, $c_1$ and $C_1$} such that $0 < c_1 < C_1 < \infty$ and $C \in [1, \infty)$ if it has the following properties: individually, each $\mathscr{D}^t$ is a system of dyadic cubes with parameters $\delta \in (0,1)$ and $0 < c_1 < C_1 < \infty$; collectively, for each ball $\widetilde{B}(x,r)\subset X$ with $\delta^{k+3} <r< \delta^{k+2}, k \in \mathbb{Z}$, there exist $t \in \{1,2,\ldots,T\}$ and $Q \in \mathscr{D}^t$ of generation $k$ and with center point $^{t}x_{\alpha}^k$ such that $\rho(x, ^tx_{\alpha}^k) < 2A_0\delta ^k$ and
\begin{equation}\label{a8}
  \widetilde{B}(x,r) \subset Q \subset \widetilde{B}(x, Cr).
\end{equation}
\end{defn}

\begin{thm}\label{klp1}\textbf{(Theorem 2.7 in \cite{KLPW16})}
Let $(X, \rho)$ be a geometrically doubling quasimetric space. Then there exists a collection $\{\mathscr{D}^t: t = 1,2,\ldots,T\}$ of adjacent systems of dyadic cubes with parameters $0 < \delta \leq (96A_0^6)^{-1}$ and $c_1 = (12A_0^4)^{-1}$, $C_1 = 4A_0^2$ and $C = 8A_0^3\delta^{-3}$. The center points $^{t}x_{\alpha}^k$ of the cubes $Q \in \mathscr{D}^t_k $ satisfy, for each $t \in \{1,2,\ldots,T\}$, the  two inequalities
\begin{equation*}
 \rho (^tx_{\alpha}^k, ^tx_{\beta}^k) \geq (4A_0^2)^{-1}\delta ^k \hspace{0.5 cm} (\alpha \neq \beta),
  \hspace{1.5cm} \min_{\alpha} \rho (x, ^tx_{\alpha}^k) < 2A_0\delta ^k \text{ for all } x \in X,
\end{equation*}
and a certain partial order $\leq$ among their index pairs $(k, \alpha)$.
\end{thm}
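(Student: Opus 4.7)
The plan is to verify each of conditions (a)--(d) of Corollary~\ref{thm.R1.quasimetric} separately, driven by the single observation that $\rho = D^\beta$ only distorts the metric by the strictly increasing homeomorphism $t \mapsto t^\beta$ of $[0,\infty)$. The key computational identity is
\[ \widetilde{B}_\rho(x, r) = \widehat{B}_D(x, r^{1/\beta}), \]
which lets every condition stated for $\rho$-balls be transferred to $D$-balls at the rescaled radius $r^{1/\beta}$. A second ingredient is the snowflaking identity $\rho_\varepsilon = D$ when $\varepsilon = 1/\beta$, which will make condition (d) essentially automatic.

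First, invoke Lemma~\ref{lem:construct1} to see that $\rho$ is a quasimetric with constant $A_0 = 2^{\beta-1}$; the proof uses the convexity inequality $(s+t)^\beta \le 2^{\beta-1}(s^\beta + t^\beta)$ for $s,t \ge 0$ applied to $s = D(x,y)$, $t = D(y,z)$. Doubling of $\mu$ on $(X,\rho)$ follows from
\[ \mu\bigl(\widetilde{B}_\rho(x, 2r)\bigr) = \mu\bigl(\widehat{B}_D(x, 2^{1/\beta}r^{1/\beta})\bigr) \le C\,\mu\bigl(\widehat{B}_D(x, r^{1/\beta})\bigr) = C\,\mu\bigl(\widetilde{B}_\rho(x, r)\bigr), \]
by the general dilation estimate~\eqref{eq:dbl_measure_2} applied to $\mu$ on $(X,D)$ with dilation factor $2^{1/\beta}$. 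Conditions (a) and (b) then transfer verbatim because $D$ and $\rho$ generate the same topology: the Borel $\sigma$-algebra, the support of $\mu$, local finiteness, compact sets, and open neighbourhoods all coincide. For (c), the ball identity and $Q$-regularity of $(X,D,\mu)$ give
\[ \kappa^{-1} r^{Q/\beta} \le \mu\bigl(\widetilde{B}_\rho(x,r)\bigr) \le \kappa r^{Q/\beta}, \]
so $(X,\rho,\mu)$ is $(Q/\beta)$-regular, that is, $\alpha$-regular with $\alpha = Q\varepsilon$ for $\varepsilon = 1/\beta$.

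For condition (d), set $\varepsilon = 1/\beta$, so that $\rho_\varepsilon(x,y) = (D(x,y)^\beta)^{1/\beta} = D(x,y)$. This choice is compatible with Theorem~\ref{prop:chain_apporoach}, since $(2A_0)^\varepsilon = (2^\beta)^{1/\beta} = 2$. The $\varepsilon$-chain construction of Definition~\ref{def:e chain} applied to $\rho$ then recovers $D$ itself: for any chain $x = x_0, \ldots, x_n = y$ the $D$-triangle inequality gives $\sum_{i=0}^{n-1} D(x_i, x_{i+1}) \ge D(x,y)$, with equality attained at $n=1$, so $d_\varepsilon = D$ and in particular $d_\varepsilon$ is comparable to $\rho_\varepsilon$. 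Since $\alpha/\varepsilon = (Q/\beta)/(1/\beta) = Q$, condition (d) reduces exactly to the hypothesis (iv) that $(X,D,\mu)$ admits a weak $(1,p)$-Poincar\'e inequality for some $p \in [1, Q)$.

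The only subtle point is bookkeeping: one must keep track of the three coupled exponents $\beta$, $\varepsilon = 1/\beta$, and $\alpha = Q\varepsilon$, and in particular verify the compatibility $(2A_0)^\varepsilon = 2$ required by Theorem~\ref{prop:chain_apporoach}. Everything else collapses to the substitution $r \leftrightarrow r^{1/\beta}$ in the corresponding $D$-statement.
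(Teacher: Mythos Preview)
Your proposal does not address the stated theorem at all. Theorem~\ref{klp1} asserts the existence, in an arbitrary geometrically doubling quasimetric space, of a finite collection of adjacent dyadic systems with the listed parameters; it is quoted from~\cite{KLPW16} and is not proved in this paper. What you have written instead is a proof of Example~\ref{thm:construct}: you take a metric measure space $(X,D,\mu)$ satisfying (i)--(iv), set $\rho = D^\beta$, and verify conditions (a)--(d) of Corollary~\ref{thm.R1.quasimetric}. None of the objects in your argument---the snowflaking $\rho_\varepsilon$, the Poincar\'e inequality, $\alpha$-regularity, the $\varepsilon$-chain metric---appear in the statement you were asked to prove, and conversely your argument says nothing about dyadic cubes, center points $^t x_\alpha^k$, or the covering property~\eqref{a8}.

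If the intended target was indeed Example~\ref{thm:construct}, then your argument is correct and matches the paper's own treatment in Section~\ref{sec:construction_X} essentially line for line: the ball identity $\widetilde{B}_\rho(x,r) = \widehat{B}_D(x,r^{1/\beta})$, Lemma~\ref{lem:construct1} for the quasitriangle constant, the check $(2A_0)^\varepsilon = 2$, and the chain computation $d_\varepsilon = D$ (Lemma~\ref{lem:construct2}) are exactly what the paper does. But as a proof of Theorem~\ref{klp1} it is simply off target.
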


\subsection{Doubling weights vs dyadic doubling weights}\label{subsec:dbl_dydbl_weight}
In this section, we define doubling weights and dyadic doubling weights on metric measure spaces and spaces of homogeneous type.
\begin{defn}
(i) A \emph{weight} on a metric measure space~$(X,d,\mu)$ is a nonnegative locally integrable function $w: X \rightarrow [0,\infty]$.

(ii) A weight~$w$ on a metric measure space~$(X,d,\mu)$ is \emph{doubling} if there is a constant~$C_{\text{dbl}}$ such that for all~$x \in X$ and all~$r>0$,
\begin{equation}\label{eq:dbl_measure_3}
0 < w(\widehat{B}(x,2r)) \leq C_{\text{dbl}} w(\widehat{B}(x,r)) < \infty.
\end{equation}
We recall the notation~$w(E) = \int_E \,w\,d\mu$ where~$E \subset X$.
As in Definition~\ref{defn:dbl_measure}, inequality~\eqref{eq:dbl_measure_3} implies a more general property of the doubling weight~$w$. That is,  for all~$x \in X$, $r > 0$ and~$\lambda > 1$ we have
\begin{equation}\label{eq:dbl_measure_4}
   w(\widehat{B}(x,\lambda r)) \leq C_{\text{dbl}}^{1 + \log_2 \lambda} w(\widehat{B}(x,r)).
\end{equation}

(iii) A weight~$w$ on a metric measure space~$(X,d,\mu)$ equipped with a system~$\mathscr{D}$ of dyadic cubes is \emph{dyadic doubling} if there is a constant~$C_{\text{dydbl}}$ such that for every dyadic cube~$Q \in \mathscr{D}$ and for each child~$Q'$ of~$Q$,
\[0 < w(Q) \leq C_{\text{dydbl}} w(Q') < \infty.\]

 (iv) Similarly, we define weights, doubling weights and dyadic doubling weights on a space of homogeneous type~$(X,\rho,\mu)$ by replacing the ball~$\widehat{B}$ by the quasiball~$\widetilde{B}$.
\end{defn}

It is shown in~\cite{KLPW16} that on a space of homogeneous type~$(X,\rho,\mu)$, if a weight~$w$ is doubling on~$X$ with doubling constant~$C_{\text{dbl}}$, then~$w$ is dyadic doubling with w.r.t.~each of the systems~$\mathscr{D}^t$ of dyadic cubes, $t = 1,\ldots,T$, given by Theorem~\ref{cubesinX}. The dyadic doubling constant can be taken to be~$C_{\text{dydbl}} = C_{\text{dbl}}^N$, with ~$N = 1 + \log_2(2A_0C_1/(c_1\delta))$, where $A_0$ is the quasitriangle constant, and $C_1$, $c_1$ and~$\delta$  are from Theorem~\ref{cubesinX}. The same proof can be applied for a doubling measure~$\mu$ to conclude that~$\mu$ is dyadic doubling with~$C_{\text{dydbl}} = A_1^N$.

\subsection{The function space~$\bmo$ on~$X$}\label{subsec:BMO_X}
In this section, we define the function space~$\bmo$ on metric measure spaces~$(X,d,\mu)$ and on spaces of homogeneous type~$(X,\rho,\mu)$.

\begin{defn}\label{def:BMO_X}
  Let $(X, d, \mu)$ be a metric measure space.
  A locally integrable real-valued function~$f: (X,d,\mu) \rightarrow \R$
  is in~$\bmo(X,d,\mu)$ if
    \begin{equation}\label{eq:BMO_X}
        \|f\|_{\bmo(X,d,\mu)}
        := \sup_{\widehat{B}}\frac{1}{\mu(\widehat{B})}\int_{\widehat{B}} \lf|f(x) - f_{\widehat{B}}\r| \,d\mu(x) < \infty,
    \end{equation}
    where
    $f_{\widehat{B}} := \frac{1}{\mu(\widehat{B})}\int_{\widehat{B}} f(y) \,d\mu(y)$
    is the average of the function~$f$ over the (metric) ball~$\widehat{B} \subset X$.

    Let $\D$ denote any fixed system of dyadic cubes in~$(X,d,\mu)$.
    We define the dyadic $\bmo$ classes $\bmo_{\D}(X,d,\mu)$ as in~\eqref{eq:BMO_X} above, except that the ball~$\widehat{B}$ is replaced by the dyadic cube~$Q \in \D$.
\end{defn}

Let~$(X,\rho,\mu)$ be a space of homogeneous type.
The function classes $\bmo(X,\rho,\mu)$ and $\bmo_{\D}(X,\rho,\mu)$ are defined as in Definition~\ref{def:BMO_X}, except that the ball~$\widehat{B}$ is replaced by the quasiball~$\widetilde{B}$, and the fixed system~$\D$ of dyadic cubes is now in~$(X,\rho,\mu)$.

\subsection{Quasisymmetric maps on~$X$ and their generalised Jacobians $\widehat{J}_f$ and $\widetilde{J}_f$}\label{subsec:quasisymmetic}
The concept of quasisymmetry
is a generalisation of quasiconformality in arbitrary
metric spaces.
We now define $\eta$-\emph{quasisymmetric} maps and their generalised Jacobians.
\begin{defn}\label{def:quasisym}\cite{Tys98} (\textbf{$\eta$-quasisymmetric})
Let~$(X,d_X)$ and~$(Y,d_Y)$ be metric spaces. A homeomorphism $f:(X,d_X) \rightarrow (Y,d_Y)$ is called \emph{$\eta$-quasisymmetric} if there is an increasing homeomorphism $\eta: [0,\infty)\rightarrow [0,\infty)$ so that
\[  \frac{d_X(x,a)}{d_X(x,b)}  \leq \theta \qquad \Rightarrow \qquad \frac{d_Y(f(x),f(a))}{d_Y(f(x),f(b))} \leq \eta(\theta).\]
\end{defn}

Let~$(X,\rho_X)$ and~$(Y,\rho_Y)$ be quasimetric spaces. An $\eta$-quasisymmetric mapping $f:(X,\rho_X) \rightarrow (Y,\rho_Y)$ is defined as in Definition~\ref{def:quasisym} above, except that the metrics~$d_X$
and~$d_Y$ are replaced by the quasimetrics~$\rho_X$ and~$\rho_Y$, respectively.

In Lemma~\ref{lem:homeoequi}, we will show that the $\e$-chain approach preserves the $\eta$-quasisymmetry of functions on~$(X,\rho,\mu)$.

Given a metric measure space~$(X,d,\mu)$,
let~$f$ be an $\eta$-quasisymmetric map from~$(X,d,\mu)$ onto itself.
For each $\mu$-measurable set~$E \subset X$, we define the \emph{pullback measure}~$\mu_f$ by $\mu_f(E) := \mu(f(E))$.
The measure~$\mu_f$ is in fact doubling (see Lemma~\ref{lem:nu_dbl_2}).
We define the \emph{generalised Jacobian of~$f$ w.r.t.~the metric~$d$} by
\begin{equation}\label{eq:JacoQuasi_hat}
    \widehat{J}_f(x) := \lim_{r \rightarrow 0^+}\frac{\mu_f(\overline{B}_{d}(x,r))}{\mu(\overline{B}_{d}(x,r))},
\end{equation}
where $\overline{B}_{d}(x,r) := \{y \in X: d(x,y) \leq r\}$ for all $x \in X$ and $r>0$.


Given a space of homogeneous type~$(X,\rho,\mu)$,  the \emph{generalised Jacobian of~$f$ w.r.t.~the quasimetric~$\rho$}
is defined similarly, except that the ball~$\widehat{B}(x,r)$ is replaced by the quasiball~$\widetilde{B}(x,r)$, and function~$f$ is an $\eta$-quasisymmetric map from~$(X,\rho,\mu)$ onto itself:
\begin{equation}\label{eq:JacoQuasi_tilde}
    \widetilde{J}_f(x) := \lim_{r \rightarrow 0^+}\frac{\mu_f(\overline{B}_{\rho}(x,r))}{\mu(\overline{B}_{\rho}(x,r))},
\end{equation}
where $\overline{B}_{\rho}(x,r) := \{y \in X: \rho(x,y) \leq r\}$ for all $x \in X$ and $r>0$.

Below, when in an already known setting (metric or quasimetric), we will call~$\widehat{J}_f$ and~$\widetilde{J}_f$ the \emph{generalised Jacobian} for short.
Using our generalisation of the Radon--Nikodym Theorem (Theorem~\ref{lem:Nages analog}), we can show that under some additional conditions, the generalised Jacobians~$\widehat{J}_f(x)$ and~$\widetilde{J}_f(x)$  exist and are finite for $\mu$-a.e.~$x \in X$.

The quantities $\widehat{J}_f$ and~$\widetilde{J}_f$ are generalisations of the volume derivative~$V_f$ in~\cite{HK98}, where our doubling measure~$\mu$ has replaced the Hausdorff $Q$-measure, $Q >1$.
Pekka Koskela, one of the authors of~\cite{HK98}, confirmed to us by email that in the definition of $V_f$ in [HK98], the balls should be closed balls, rather than open balls as is stated in [HK98].
This lets us define our generalised Jacobians in terms of closed balls.



\subsection{Weighted inequalities on~$X$}\label{sec:weight}
In this section, we introduce two classes of weight functions, called \emph{$A_p$ weights} and \emph{reverse-H\"{o}lder-$p$ weight}s.


\begin{defn}\cite{Muc72}
  \label{def:Ap}
  (\textbf{$A_p$ weight})
  Let~$(X,d,\mu)$ be a metric measure space.
  Let $\omega(x)$ be a weight on~$X$. For $p$ with $1 < p < \infty$, we
  say $\omega$ is an \emph{$A_p$ weight}, written $\omega\in A_p$ or $\omega\in A_p(X)$,
  if
    \begin{equation}\label{eq2weight}
    A_p(\om)
    := \sup_{\widehat{B}} \left(\intav_{\widehat{B}} \omega\right)
    \left(\intav_{\widehat{B}} \bigg(\frac{1}{\om}\bigg)^{1/(p-1)}\right)^{p-1}
    < \infty.
    \end{equation}
Here the supremum is taken over all
  balls~$\widehat{B}\subset X$. The quantity~$[\om]_{A_p}$ is called
  the \emph{$A_p(X)$~constant} of~$\om$.

Given a system of dyadic cubes $\D$ on $X$ as in Definition~\ref{dyadiccubes}, we define the \emph{dyadic $A_p$} classes $A_p^{\D} = A_p^{\D}(X)$ as in~\eqref{eq2weight}
above except that now the supremum is taken over all
      dyadic cubes $Q\in \D$.
\end{defn}

\begin{defn}
  \label{def:RHp}
  (\textbf{Reverse-H\"{o}lder-$q$ weight})
  Let~$(X,d,\mu)$ be a metric measure space.
  Let $\omega(x)$ be a weight on~$X$. For $q$ with $1 < q < \infty$, we
  say $\omega$ is a \emph{reverse-H\"older-$q$ weight} (reverse-H\"{o}lder, for short), written
  $\omega\in RH_q$ or $\omega\in RH_q(X)$,
  if
    \begin{equation}\label{eq5weight}
    RH_q(\om)
    := \sup_{\widehat{B}} \left(\intav_{\widehat{B}} \omega^q\right)^{1/q}
    \left(\intav_{\widehat{B}} \omega\right)^{-1}
    < \infty.
    \end{equation}
Here the supremum is taken over all
  balls~$\widehat{B}\subset X$. The quantity $[w]_{RH_q}$ is called
  the \emph{$RH_q(X)$~constant} of~$\om$.

 Given a system of dyadic cubes $\D$ on $X$ as in Definition~\ref{dyadiccubes}, we define
  the \emph{dyadic $RH_q$} classes $RH_q^{\D} = RH_q^{\D}(X)$ as in~\eqref{eq5weight}
  above except that now the supremum is taken over all dyadic cubes $Q\in \D$. In addition, one must require explicitly that $\om$ is a dyadic doubling weight. This is a technical requirement which is also present in the Euclidean case.
\end{defn}

Given a space of homogeneous type~$(X,\rho,\mu)$, we define $A_p$ weights, dyadic $A_p$ weights, $RH_q$ weights and dyadic $RH_q$ weights as in Definitions~\ref{def:Ap} and~\ref{def:RHp} above except that the ball~$\widehat{B}(x,r)$ is replaced by the quasiball~$\widetilde{B}(x,r)$.

The definitions of $A_p$ weights and reverse-H\"older-$p$ weights indicate that such a weight cannot degenerate or grow too quickly. This property can be phrased equivalently in terms of how much the logarithm of the weight can  oscillate.

\subsection{Results about measurable functions}\label{sec:meable func}
Given a set~$X$ and a $\sigma$-algebra $\mathfrak{M}$ on~$X$, $(X,\mathfrak{M})$ is called a \emph{measurable space}.
\begin{defn}\label{defn:meable_func}(\textbf{Measurable function})
Let~$(X,\mathfrak{M})$ and~$(Y,\mathfrak{N})$ be measurable spaces. 
A mapping~$h: X \rightarrow Y$ is called~$(\mathfrak{M}, \mathfrak{N})$-\emph{measurable}, or just \emph{measurable} when~$\mathfrak{M}$ and~$\mathfrak{N}$ are understood, if~$h^{-1}(E) \in \mathfrak{M}$ for all~$E \in \mathfrak{N}$.
\end{defn}
 More details about measurable functions can be found in~\cite[Chapter~2]{Fol99}.
Below we collect some properties related to measurable functions.
These will be applied for the generalised Jacobian~$\widehat{J}_f$ in Section~\ref{sec:R1_mec}.
\begin{prop}[\textbf{Results about measurable functions}]
\label{prop:properties_meable_func}
  Let~$h: X \rightarrow [0,\infty]$ be a measurable function on a
  measurable space $(X,\mathfrak{M})$. Then\\
  \indent\textup{(i)} for every~$n \in \R^+$ the function~$h^{n}(x)$ is also measurable, and\\
  \indent \textup{(ii)}  the reciprocal~$1/h(x)$ is measurable, except on the set~$\{x \in X: h(x) = 0\}$.\\
  Suppose~$\mu_1$ and~$\mu_2$ are two measures such that for all measurable sets~$E \subset X$ we have~$\mu_1(E) \sim_C \mu_2(E)$, where~$C>0$ is a constant. Then\\
  \indent\textup{(iii)}$\int_X h \,d\mu_1 \sim_C \int_X h \,d\mu_2.$\\
  Suppose~$\mathcal{B}_X$ is the Borel $\sigma$-algebra generated by the collection of open sets in~$X$. Assign to the set~$X$ a metric~$d$ and a doubling measure~$\mu$, which is defined on~$\mathcal{B}_X$. For each~$x \in X$ and~$r>0$, set~$B(x,r) := \{y \in X: d(x,y)<r\}$. Then\\
  \indent\textup{(iv)} for each fixed~$r$, the function~$\varphi(x) = \varphi_r(x):= \mu(\overline{B}(x,r))$ is measurable. Here for each~$x \in X$ and~$r>0$, let $\overline{B}(x,r) := \{y \in X: d(x,y) \leq r\}$ denote the closed ball on~$X$.
\end{prop}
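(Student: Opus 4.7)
The plan is to handle the four assertions separately, treating (i)--(iii) by routine measure-theoretic machinery and devoting the real work to (iv). For (i) and (ii), I would observe that the maps $t \mapsto t^{n}$ on $[0,\infty]$ (with $\infty^{n} := \infty$ for $n > 0$) and $t \mapsto 1/t$ on $(0,\infty]$ (with $1/\infty := 0$) are continuous, hence Borel, on their respective domains; since the composition of a measurable function with a Borel function is measurable, $h^{n}$ is measurable on $X$ and $1/h$ is measurable on the measurable set $\{h>0\}$. For (iii), I would use the standard approximation scheme: the comparability $\mu_{1}(E) \sim_{C} \mu_{2}(E)$ extends linearly to nonnegative simple functions, and the monotone convergence theorem, applied to an increasing sequence $s_{n} \uparrow h$ of simple functions, propagates the comparability with the same constant $C$ to the integrals $\int_{X} h\,d\mu_{i}$.

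For (iv), the key point is to establish the stronger statement that $\varphi$ is upper semicontinuous on $X$; measurability with respect to $\mathcal{B}_{X}$ then follows because the superlevel sets $\{x : \varphi(x) \geq a\}$ are closed. Fix $x \in X$ and let $x_{n} \to x$. A direct triangle-inequality argument gives
\[
\limsup_{n \to \infty} \overline{B}(x_{n}, r) \;\subseteq\; \overline{B}(x, r),
\]
since any $y$ lying in $\overline{B}(x_{n_{k}}, r)$ along a subsequence satisfies $d(x, y) \leq d(x, x_{n_{k}}) + r \to r$. Moreover, for all $n$ sufficiently large we have $\overline{B}(x_{n}, r) \subseteq \widehat{B}(x, r+2)$, whose $\mu$-measure is finite by the doubling hypothesis. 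The reverse Fatou lemma for measures then yields
\[
\limsup_{n \to \infty} \varphi(x_{n}) \;=\; \limsup_{n \to \infty} \mu(\overline{B}(x_{n}, r)) \;\leq\; \mu\!\left(\limsup_{n \to \infty} \overline{B}(x_{n}, r)\right) \;\leq\; \mu(\overline{B}(x, r)) \;=\; \varphi(x),
\]
which is the desired upper semicontinuity.

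The main obstacle lies entirely in part (iv), and within it the only delicate step is the application of the reverse Fatou lemma, which requires exhibiting a Borel set of finite $\mu$-measure that eventually contains the $\overline{B}(x_{n}, r)$; this is precisely where the doubling property of $\mu$ is used. Everything else reduces to continuity of standard functions on $[0,\infty]$, linearity of integration on simple functions, and monotone convergence, none of which require any structure beyond what is already assumed.
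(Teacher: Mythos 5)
Your arguments for (i)--(iii) match the paper's in substance: (i) and (ii) by composition with a continuous (hence Borel) map on the relevant subset of $[0,\infty]$, and (iii) by the standard characteristic-function-to-simple-function-to-general-function escalation with monotone convergence. All three are routine and correct.

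For (iv) you and the paper both aim at upper semicontinuity of $\varphi_r(x)=\mu(\overline{B}(x,r))$, but the mechanisms differ. The paper works directly with the sublevel set $\{\varphi_r<a\}$: it first proves, via the monotone convergence theorem applied to the decreasing functions $\chi_{B(x,r+1/n)}\downarrow\chi_{\overline{B}(x,r)}$ (i.e.\ continuity from above of $\mu$), that $\mu(\overline{B}(x,r))=\lim_n\mu(B(x,r+1/n))$, hence finds $\varepsilon>0$ with $\mu(B(x,r+\varepsilon))<a$; it then uses the triangle inequality $\overline{B}(x',r)\subset B(x,r+\varepsilon)$ for $x'$ within $\varepsilon$ of $x$ to conclude the sublevel set is open. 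You instead invoke the sequential criterion $\limsup_{x_n\to x}\varphi_r(x_n)\le\varphi_r(x)$, obtaining the set inclusion $\limsup_n\overline{B}(x_n,r)\subseteq\overline{B}(x,r)$ by the same triangle-inequality computation, and then packaging the ``continuity from above'' step inside the reverse Fatou lemma for sets, with $\widehat{B}(x,r+2)$ supplying the required finite-measure envelope. The two proofs therefore run on the same two ideas (triangle inequality plus a decreasing-limit/Fatou-type argument using $\mu(B)<\infty$ from the doubling hypothesis), but you package them differently: your version is shorter and avoids the explicit choice of $\varepsilon$, at the cost of invoking reverse Fatou for measures and the equivalence of sequential and topological upper semicontinuity in a metric space; the paper's version is more elementary, constructing the open neighbourhood directly. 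Both are valid, and both hinge on the same finiteness-of-ball-measure fact.
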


\begin{proof}
  Properties~(i) and~(ii) are straightforward from~\cite[Proposition~2.3]{Fol99}.
  For property~(iii), first consider characteristic functions~$h(x) = \chi_E(x)$,
  then simple functions~$h(x) = \sum_{i = 1}^{n} c_i \chi_{E_i}(x)$, where~$c_i \geq 0$ and~$E_i \subset X$,
  then arbitrary nonnegative measurable functions~$h$.

Now we will show property~(iv).
Note that the function~$\varphi$ takes~$X$ to~$[0,\infty)$.
By Proposition~2.3 in~\cite{Fol99}, to show~$\varphi$ is a measurable function, it suffices to show that for all~$a > 0$
\[\varphi^{-1}([0,a)) = \{x \in X: \mu(\overline{B}(x,r)) < a\} \in \mathcal{B}_X.\]
Thus, it suffices to show that~$\varphi^{-1}([0,a))$ is a Borel set in~$X$. In particular, it is enough to show that~$\varphi^{-1}([0,a))$ is open in~$X$.

Fix~$a>0$.
Fix~$x \in \varphi^{-1}([0,a))$.
For~$\e > 0$, define a neighbourhood of~$x$ by
$$N_{x,\e} := \{x' \in X: d(x,x') < \e\}.$$
We claim that there exists~$\e > 0$ such that with~$r^* := r + \e$ we have
$\mu(\overline{B}(x,r))  \leq \mu(B(x,r^*))< a.$
This will be shown at the end of this proof.
Then for such an~$\e$, take~$x' \in N_{x,\e}$  and~$y \in \overline{B}(x',r)$. By the triangle inequality we have
$$d(y,x) \leq d(y,x') + d(x',x) < r + \e = r^*.$$
Thus, $\overline{B}(x',r) \subset B(x,r^*)$, and so $\mu(\overline{B}(x',r)) \leq \mu(B(x,r^*)) < a$. Consequently, $x' \in \varphi^{-1}([0,a))$.
Since this is true for all~$x' \in N_{x,\e}$, we have~$N_{x,\e} \subset \varphi^{-1}([0,a))$.
Since this is true for all~$x \in \varphi^{-1}([0,a))$, we see that~$\varphi^{-1}([0,a))$ is open in~$X$.
Since this is true for all~$a > 0$, we conclude that~$\varphi$ is a measurable function.

We are left with proving our claim.
Fix~$a>0$.
Fix~$x \in \varphi^{-1}([0,a))$.
We recall a result in~\cite[Exercise~15, p.~52]{Fol99}: if~$\{f_n\}$ is a sequence of measurable functions from~$X$ to~$[0,\infty]$, $f_n$ decreases pointwise to~$f$, and~$\int f_1 < \infty$, then we have $\int f = \lim \int f_n$.
We will apply this result for~$f = \chi_{\overline{B}(x,r)}$ and~$f_n = \chi_{B(x,r+1/n)},$ where~$n \in \N$.
Notice that for each~$n$, $f_n$ is a characteristic function from~$X$ to~$[0,\infty]$, so it is measurable.
As~$\mu$ is a doubling measure, for each~$n \in \N$ we obtain
\[\int_X f_n \,d\mu = \int_X \chi_{B(x,r+1/n)} \,d\mu = \mu(B(x,r+1/n)) < \infty.\]
Also, $f_n$ decreases pointwise to~$f$. To see this, consider~$y \notin \overline{B}(x,r)$. Then~$f(y) =~0$ and $f_n(y) = \chi_{B(x,r+1/n)}(y) \rightarrow 0$ as~$n \rightarrow \infty$, as for~$n$ sufficiently large, $d(y, \overline{B}(x,r)) > 1/n >0$. If~$y \in \overline{B}(x,r)$, then for each~$n \in \N$, we have~$y \in B(x,r+1/n)$, because~$\overline{B}(x,r) \subset B(x,r+1/n)$. Thus
$f(y) = \chi_{\overline{B}(x,r)}(y) =  \chi_{B(x,r+1/n)}(y) = 1.$
Therefore,  we can conclude that
\[\mu(\overline{B}(x,r)) = \int_X \chi_{\overline{B}(x,r)} \,d\mu =
\lim_{n \rightarrow \infty} \int_X  \chi_{B(x,r+1/n)} \,d\mu
=  \lim_{n \rightarrow \infty} \mu(B(x, r + 1/n)).\]
Hence, we may choose~$n$ sufficiently large that
\[\mu(\overline{B}(x,r)) \leq \mu(B(x, r + 1/n))
< \frac{\mu(B(x,r))+a}{2} < a.\]
Setting~$\e = 1/n$, our claim is established.
\end{proof}

\subsection{Results about $A_{\infty}$-related measures}\label{sec:A_infty measure}
\begin{defn}\label{prop:properties_A_infty}\textbf{($A_{\infty}$-related)}
On a metric space~$(X,d)$, a measure~$\mu_1$ is \emph{$A_{\infty}$-related }to a measure~$\mu_2$ if for each~$\lambda > 0$, there exists~$\delta > 0$ such that
  \[\mu_2(E) < \delta \mu_2(B) \quad \Rightarrow \quad \mu_1(E) < \lambda\mu_1(B),\]
  whenever~$E$ is a measurable subset of a ball~$B$.
\end{defn}
 Below we collect some properties of $A_{\infty}$-related measures.
 They will be applied for measures~$\mu$, $\mathcal{H}_{\al}$, $\mu_f$, $\sigma_f$ in Section~\ref{sec:R1_mec}.
\begin{prop}\textbf{(Results about $A_{\infty}$-related measures)}
  Let~$(X,d)$ be a metric space.
  Let~$\mu_1$ and~$\mu_2$ be measures on~$X$.
  Then the following statements hold. \\
  \indent\textup{(a)} If~$\mu_1$ is comparable to~$\mu_2$,
  then~$\mu_1$ is $A_{\infty}$-related to~$\mu_2$.\\
  \indent\textup{(b)} If  $\mu_1$ is $A_{\infty}$-related to~$\mu_2$ and
  $\mu_2$ is $A_{\infty}$-related to~$\mu_3$,
  then~$\mu_1$ is $A_{\infty}$-related to~$\mu_3$.
\end{prop}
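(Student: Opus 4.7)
Both parts follow directly from unpacking Definition~\ref{prop:properties_A_infty}, so the plan is essentially to chase the quantifiers. The main observation is that the $A_\infty$-related condition is a quantified implication of the form ``small in $\mu_2$ forces small in $\mu_1$'', and both comparability and transitivity interact cleanly with this quantifier structure.

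For part~(a), suppose $\mu_1 \sim_C \mu_2$, so that $C^{-1}\mu_2(E) \leq \mu_1(E) \leq C\mu_2(E)$ for every measurable~$E \subset X$. Given $\lambda > 0$, my plan is to set $\delta := \lambda / C^2$. If $E \subset B$ is a measurable subset of a ball~$B$ with $\mu_2(E) < \delta \mu_2(B)$, then the upper comparability gives $\mu_1(E) \leq C\mu_2(E) < C\delta\, \mu_2(B)$, and the lower comparability gives $\mu_2(B) \leq C\mu_1(B)$, so that $\mu_1(E) < C^2 \delta\, \mu_1(B) = \lambda \mu_1(B)$, as required.

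For part~(b), the plan is a two-step quantifier chase. Fix $\lambda > 0$. Since $\mu_1$ is $A_\infty$-related to $\mu_2$, apply the definition with this $\lambda$ to obtain $\delta_1 > 0$ such that whenever $E \subset B$ is measurable and $\mu_2(E) < \delta_1\, \mu_2(B)$, one has $\mu_1(E) < \lambda \mu_1(B)$. Next, since $\mu_2$ is $A_\infty$-related to $\mu_3$, apply the definition with $\delta_1$ playing the role of the small parameter to obtain $\delta > 0$ such that whenever $E \subset B$ is measurable and $\mu_3(E) < \delta\, \mu_3(B)$, one has $\mu_2(E) < \delta_1\, \mu_2(B)$. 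Concatenating these two implications gives $\mu_3(E) < \delta\, \mu_3(B) \Rightarrow \mu_1(E) < \lambda\, \mu_1(B)$, which is the desired $A_\infty$-relation between $\mu_1$ and $\mu_3$.

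There is no real obstacle here; the only care needed is to keep track of which set is required to be a measurable subset of a ball, and to verify that the same ball $B$ is reused across both applications in the transitivity step (it is, since the definition involves arbitrary measurable $E \subset B$ for a fixed ball). Thus the entire proof is a direct verification from Definition~\ref{prop:properties_A_infty}, and no auxiliary results from earlier sections are needed.
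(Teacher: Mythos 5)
Your proof is correct and matches the paper's approach; the paper simply states that both parts are ``straightforward from the comparability'' and ``follow from the definition of $A_\infty$-relatedness'' without spelling out the quantifier chase, and your argument supplies exactly the missing details (including the right choice $\delta = \lambda/C^2$ in part~(a) and the correct chaining order in part~(b)).
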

\begin{proof}
  Property~(a) is straightforward from the comparability of~$\mu_1$ and~$\mu_2$. Property~(b) follows from the definition of $A_{\infty}$-relatedness.
\end{proof}

\section{Proof of Theorem~\ref{thm.logBMO}}\label{sec:logBMO}
In this section, we prove our first main result, namely Theorem~\ref{thm.logBMO}.
The five steps of the proof of Theorem~\ref{thm.logBMO} are outlined in the Introduction. Our remaining task is proving the theorems mentioned there.
In Section~\ref{subsec:CZcubes}, we establish a Calder\'{o}n--Zygmund decomposition stated in terms of dyadic cubes.
In Section~\ref{subsec:RHr_charac}, we establish two properties of the dyadic reverse-H\"{o}lder weights.
In Section~\ref{subsec:RHr_imply_dRHr}, we prove that a reverse-H\"{o}lder weight is also a dyadic reverse-H\"{o}lder weight.
In Section~\ref{subsec:RhripliesAp}, we show that a dyadic reverse-H\"{o}lder weight is also a dyadic $A_p$ weight.
In Section~\ref{subsec:logAp_BMO}, we show that the logarithm of a dyadic $A_p$ weight is in dyadic~$\bmo$.

The setting of these sections is in spaces of homogeneous type~$(X,\rho,\mu)$.
We believe that the results presented in these sections are of independent interest, beyond our use of them in the proof of Theorem~\ref{thm.logBMO}.

To simplify the notation, in this section only,  when we say~$X$, we mean~$(X,\rho,\mu)$. When we say the ball $B(x,r)$, we mean the quasiball~$\widetilde{B}(x,r)$.
On the space~$X$, we can generate a collection~$\{\D^t: t = 1, \ldots, T\}$ of adjacent systems of dyadic cubes of~$X$, as in Definition~\ref{cubesinX} and Theorem~\ref{klp1}. When we talk about a fixed dyadic grid~$\D$ of cubes or a system~$\D$ of dyadic cubes, we mean a system~$\D^t$, when $t \in \{1,\ldots, T\}$ is fixed.

\subsection{Calder\'{o}n--Zygmund decomposition of~$(X,\rho,\mu)$ with cubes}\label{subsec:CZcubes}

   In this section, we start by establishing a Calder\'{o}n--Zygmund decomposition on spaces of homogeneous type $(X,\rho,\mu)$. In fact, this result still holds if the measure~$\mu$ is just dyadic doubling and not necessary doubling. Recall that an analogous Calder\'{o}n--Zygmund decomposition on $(X,\rho,\mu)$ has been derived previously in~\cite{CW71}. However, that version is in terms of balls, and it does not give us property~(i) in Theorem~\ref{CZ}, which is the main property that we use in proofs of other results.
   Here we derive two other analogs of the Calder\'{o}n--Zygmund decomposition in terms of dyadic cubes.
   Theorem~\ref{CZ} is called the \emph{local version} as the decomposition takes place entirely in a cube~$Q_0$. This is also the version that is used in the proof of our first main result (Theorem~\ref{thm.logBMO}). Theorem~\ref{CZ2} is called the \emph{global version}. We include it here because we believe that it has its own interest.
\begin{thm}\label{CZ} \textbf{(Calder\'{o}n--Zygmund decomposition on $(X,\rho,\mu)$: local version)}
  Given a space of homogeneous type $(X, \rho, \mu)$,
  let $\D$ denote any fixed system of dyadic cubes in~$X$.
  Take $f \in L^1(X)$ with $\supp f \subset Q_0$, where~$Q_0 \in \D$.
  Define~$\al_0 := \frac{1}{\mu(Q_0)}\int_{Q_0} f \,d\mu$.
  Define the dyadic maximal function $M$  w.r.t.~$\D$ by
  \begin{equation}\label{eq:CZ1}
  M f(x) := \sup_{\substack{Q \ni x \\ Q \in \D, Q \subset Q_0}}\intq |f(y)| \,d\mu(y),
  \end{equation}
  where the supremum is taken over all dyadic cubes $Q \in \D$ containing $x$ and included in~$Q_0$. Let $\al > \al_0$ and
  $\Om_{\al} := \{x \in Q_0: M f(x) > \al\}.$ 
  Then $\Om_{\al}$ can be written as a disjoint union of dyadic cubes $\{Q_j\}$ with the following three properties.\\
  \indent\textup{(i)} For each cube $Q_j$,
  \[\al < \intqj |f(x)| \,d\mu(x) \leq A_1^N\al, \]
  where $A_1 \geq 1$ is the doubling constant of $\mu$ and~$N := 1 + \log_2(2A_0C_1/(c_1\delta))$. \\
  \indent\textup{(ii)} For $\mu$-a.e. $x \in X\backslash\bigcup_j Q_j$, we have~$M f(x) \leq \al$. \\
  \indent\textup{(iii)} $\mu(\Om_{\al}) \leq \frac{1}{\al}\int_X |f(x)| \,d\mu(x)$.
\end{thm}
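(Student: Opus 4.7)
The argument follows the classical dyadic Calderón--Zygmund selection scheme, carried out inside the cube $Q_0$ and exploiting the dyadic doubling of $\mu$ (with constant $A_1^N$, $N := 1 + \log_2(2A_0 C_1/(c_1\delta))$) established in Section~\ref{subsec:dbl_dydbl_weight}.

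\textbf{Step 1 (Selection of stopping cubes).} For each $x \in \Omega_\alpha$, the definition of $Mf$ supplies some $Q \in \D$ with $x \in Q \subset Q_0$ and $\frac{1}{\mu(Q)}\int_Q |f|\,d\mu > \alpha$. Because $\alpha > \alpha_0 = \frac{1}{\mu(Q_0)}\int_{Q_0} f\,d\mu$, the cube $Q_0$ itself does not qualify, so every candidate $Q$ is a strict dyadic subcube of $Q_0$ and hence has only finitely many dyadic ancestors inside $Q_0$. I can therefore let $Q_x$ be the \emph{maximal} dyadic cube with $x \in Q_x \subset Q_0$ and $\frac{1}{\mu(Q_x)}\int_{Q_x}|f|\,d\mu > \alpha$. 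Define $\{Q_j\}$ to be the collection of all such maximal cubes as $x$ ranges over $\Omega_\alpha$. By the dyadic nesting property~\eqref{a2}, two such maximal cubes either coincide or are disjoint, so $\{Q_j\}$ is a pairwise disjoint family, and by construction $\bigcup_j Q_j \supseteq \Omega_\alpha$.

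\textbf{Step 2 (The two inequalities in (i)).} The lower bound $\alpha < \frac{1}{\mu(Q_j)}\int_{Q_j}|f|\,d\mu$ is immediate from Step~1. For the upper bound, let $\widehat{Q}_j$ denote the dyadic parent of $Q_j$ in $\D$; since $Q_j \subsetneq Q_0$, the cube $\widehat{Q}_j$ still lies in $Q_0$. Maximality of $Q_j$ forces $\frac{1}{\mu(\widehat{Q}_j)}\int_{\widehat{Q}_j}|f|\,d\mu \leq \alpha$. Therefore
\[
\frac{1}{\mu(Q_j)}\int_{Q_j}|f|\,d\mu \leq \frac{\mu(\widehat{Q}_j)}{\mu(Q_j)}\cdot\frac{1}{\mu(\widehat{Q}_j)}\int_{\widehat{Q}_j}|f|\,d\mu \leq \frac{\mu(\widehat{Q}_j)}{\mu(Q_j)}\,\alpha \leq A_1^N \alpha,
\]
where the last inequality uses the dyadic doubling estimate $\mu(\widehat{Q}_j) \leq A_1^N\mu(Q_j)$ from Section~\ref{subsec:dbl_dydbl_weight}.

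\textbf{Step 3 (Property (ii) and containment $\Omega_\alpha = \bigcup_j Q_j$).} If $x \in X\setminus\bigcup_j Q_j$, then no dyadic cube $Q$ with $x \in Q \subset Q_0$ can satisfy $\frac{1}{\mu(Q)}\int_Q |f|\,d\mu > \alpha$; for otherwise the maximal such cube would belong to $\{Q_j\}$ and contain $x$. Hence $Mf(x) \leq \alpha$ directly from~\eqref{eq:CZ1} (if $x \notin Q_0$ the supremum is over the empty set and the conclusion is trivial). This also gives $\Omega_\alpha \subseteq \bigcup_j Q_j$; the reverse inclusion is immediate from the lower bound in~(i).

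\textbf{Step 4 (Property (iii)).} Using the disjointness from Step~1 and the lower bound from Step~2,
\[
\mu(\Omega_\alpha) = \sum_j \mu(Q_j) \leq \frac{1}{\alpha}\sum_j \int_{Q_j}|f|\,d\mu \leq \frac{1}{\alpha}\int_X |f|\,d\mu,
\]
which is~(iii). There is no serious obstacle here; the only points that require care are the verification that $Q_0$ does not qualify as a stopping cube (which is exactly the hypothesis $\alpha > \alpha_0$, ensuring the selection is strict and the parent lies in $Q_0$) and the invocation of the \emph{dyadic} doubling constant $A_1^N$ in place of the metric doubling constant, which is what produces the sharp factor in~(i).
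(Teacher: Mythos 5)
Your argument is correct and is essentially the same proof the paper gives: the standard dyadic stopping-time selection of maximal cubes inside $Q_0$, with the hypothesis $\al > \al_0$ playing exactly the role you describe (guaranteeing $Q_0$ itself is not a stopping cube, so ancestors stop at a finite stage and the parent of each $Q_j$ sits in $Q_0$), and the dyadic doubling constant $A_1^N$ supplying the factor in property~(i). The paper merely cites the classical argument in Stein and lists the same adjustments you carry out explicitly, so the two proofs coincide in substance.
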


\begin{thm}\label{CZ2} \textbf{(Calder\'{o}n--Zygmund decomposition on $(X, \rho, \mu)$: global version)}
  Given a space of homogeneous type $(X, \rho, \mu)$ such that~$\mu(X) < \infty$,
  let $\D$ denote any fixed system of dyadic cubes in~$X$.
  Take $f \in L^1(X)$.
  Define the dyadic maximal function $M$  w.r.t.~$\D$ by
  \begin{equation*}
  M f(x) := \sup_{\substack{Q \ni x \\ Q \in \D}}\intq |f(y)| \,d\mu(y)
  \end{equation*}
  where the supremum is taken over all dyadic cubes $Q \in \D$ containing $x$. Let $\al > 0$ be such that
  $\Om_{\al} := \{x \in X: M f(x) > \al\} \text{ has finite measure}.$
  Then $\Om_{\al}$ can be written as a disjoint union of dyadic cubes $\{Q_j\}$ with the following three properties.\\
  \indent\textup{(i)} For each cube $Q_j$,
  \[\al < \intqj |f(x)| \,d\mu(x) \leq A_1^N\al, \]
  where $A_1 \geq 1$ is the doubling constant of $\mu$ and~$N := 1 + \log_2(2A_0C_1/(c_1\delta))$. \\
  \indent\textup{(ii)} For $\mu$-a.e. $x \in X\backslash\bigcup_j Q_j$, we have~$M f(x) \leq \al$. \\
  \indent\textup{(iii)} $\mu(\Om_{\al}) \leq \frac{1}{\al}\int_X |f(x)| \,d\mu(x)$.
\end{thm}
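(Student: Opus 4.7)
\medskip
\noindent \emph{Proof proposal.} The argument mirrors the local version (Theorem~\ref{CZ}), the main difference being that the selection of maximal cubes is now performed throughout the full dyadic system $\mathscr{D}$ rather than being confined to subcubes of a fixed $Q_0$.

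First I would run a stopping-time construction at each $x \in \Omega_\alpha$. The partition and nesting properties \eqref{a1}, \eqref{a2} give a unique totally ordered chain $\{Q^k(x)\}_{k \in \Z}$ of dyadic cubes containing $x$, with $Q^{k+1}(x) \subset Q^k(x)$. Since $Mf(x) > \alpha$, the set $S(x) := \{k \in \Z : \intav_{Q^k(x)} |f|\, d\mu > \alpha\}$ is nonempty. Moreover, any $Q^k(x)$ with $k \in S(x)$ lies inside $\Omega_\alpha$ (for every $y \in Q^k(x)$ one has $Mf(y) \geq \intav_{Q^k(x)} |f|\, d\mu > \alpha$), so $\mu(Q^k(x)) \leq \mu(\Omega_\alpha) < \infty$. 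Combining this with the lower ball estimate $\mu(Q^k(x)) \geq \mu(\widetilde{B}(x_\alpha^k, c_1 \delta^k))$ from \eqref{a5} and the doubling of $\mu$, the measures along the chain grow as $k$ decreases, which forces $S(x)$ to be bounded below. I would then set $k^*(x) := \min S(x)$ and $Q(x) := Q^{k^*(x)}(x)$.

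Next I would extract a disjoint family. By the trichotomy \eqref{a2}, any two dyadic cubes are nested or disjoint. Suppose $Q(x_1) \subsetneq Q(x_2)$ for two distinct selected cubes; since $\delta < 1$ makes higher generations correspond to smaller cubes, this forces $k^*(x_1) > k^*(x_2)$. But $x_1 \in Q(x_1) \subset Q(x_2)$, so by uniqueness of cubes in the chain at $x_1$ we have $Q(x_2) = Q^{k^*(x_2)}(x_1)$, giving $k^*(x_2) \in S(x_1)$ and contradicting $k^*(x_1) = \min S(x_1)$. Hence $\{Q(x) : x \in \Omega_\alpha\}$ consists of pairwise disjoint cubes, which I enumerate as $\{Q_j\}$; then $\Omega_\alpha = \bigcup_j Q_j$.

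Properties~(i)--(iii) then follow as in the local argument. For~(i), each $Q_j$ admits a unique dyadic parent $\widehat{Q_j}$ at the preceding generation by \eqref{a3}, and the minimality of $k^*$ gives $\intav_{\widehat{Q_j}} |f|\, d\mu \leq \alpha$; the dyadic doubling of $\mu$ with constant $A_1^N$ (as described after Theorem~\ref{klp1}) then yields
\[
\intqj |f|\, d\mu \;\leq\; \frac{\mu(\widehat{Q_j})}{\mu(Q_j)}\, \intav_{\widehat{Q_j}} |f|\, d\mu \;\leq\; A_1^N\, \alpha.
\]
Property~(ii) is immediate from $\Omega_\alpha = \bigcup_j Q_j$: if $x \notin \bigcup_j Q_j$ then $Mf(x) \leq \alpha$. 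Property~(iii) follows from the disjointness and the lower bound in~(i):
\[
\mu(\Omega_\alpha) = \sum_j \mu(Q_j) \;\leq\; \frac{1}{\alpha} \sum_j \int_{Q_j} |f|\, d\mu \;\leq\; \frac{1}{\alpha} \int_X |f|\, d\mu.
\]

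The principal obstacle is showing that $S(x)$ is bounded below for each $x \in \Omega_\alpha$. This is the one place where the assumptions $\mu(X) < \infty$ and $\mu(\Omega_\alpha) < \infty$ are genuinely used: in the local version the ambient cube $Q_0$ trivially caps the chain from above, whereas here one must rule out infinite descent by balancing the bound $\mu(Q^k(x)) \leq \mu(\Omega_\alpha)$ against the growth forced by \eqref{a5} and the doubling property. All subsequent steps are direct adaptations of the local argument.
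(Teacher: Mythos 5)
Your proposal matches the paper's strategy: like the paper, you run the Stein-type stopping-time argument, select maximal dyadic cubes at each $x \in \Omega_\alpha$, prove pairwise disjointness via property~\eqref{a2}, and establish~(i)--(iii) via the dyadic parent and dyadic doubling with constant $A_1^N$. You supply considerably more detail than the paper, which for the termination step only says it is ``by contradiction, using property~\eqref{a5}, the fact that $\mu$ is doubling, and the assumption that $\mu(\Omega_\alpha)<\infty$.'' Your chain construction, disjointness argument, and the derivations of~(i)--(iii) are all correct.

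The one place to be careful is the very step you flagged as the crux, and your phrasing there is imprecise. You write that ``the measures along the chain grow as $k$ decreases,'' but under the stated hypothesis $\mu(X)<\infty$ this is not the right contradiction: by~\eqref{a5} and doubling one gets $\mu(Q^k(x)) \geq c\,\mu(\widetilde{B}(x, c_1\delta^k))$ for some constant $c\in(0,1)$, and as $k\to -\infty$ the right-hand side tends to $c\,\mu(X)$, which is \emph{finite}, not unbounded. So the contradiction with $\mu(Q^k(x)) \leq \mu(\Omega_\alpha)$ only appears when $\mu(\Omega_\alpha)$ is strictly below the asymptotic value (and, once $X$ is bounded, a single dyadic cube eventually equals $X$ at all low generations, so the measures are eventually \emph{constant}). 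In particular, when $\alpha$ is small enough that $\Omega_\alpha = X$, the set $S(x)$ is unbounded below, $\min S(x)$ does not exist, and the parent step that powers the upper bound in~(i) also breaks because $X$ has no proper dyadic parent. This exact gap is present in the paper's own one-paragraph sketch and reflects a subtlety in the theorem's hypotheses: the combination ``$\mu(X)<\infty$ and $\mu(\Omega_\alpha)<\infty$'' is redundant (the latter follows from the former) and does not pin down $\alpha$; the argument both you and the paper indicate goes through cleanly in Stein's global setting, where $\mu(X)=\infty$ and the finiteness of $\mu(\Omega_\alpha)$ is a genuine constraint, or with an explicit lower bound on $\alpha$ as in the local Theorem~\ref{CZ}. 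So your proof is the right proof of what the paper intends, but when you write it up you should replace ``the measures grow'' by an explicit statement of what lower bound you extract and under which regime it contradicts $\mu(Q^k(x)) \leq \mu(\Omega_\alpha)$.
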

The conclusion of both theorems is the same, but their hypotheses are slightly different. In particular, in Theorem~\ref{CZ}, the supremum in the definition of~$Mf(x)$ is only taken over all dyadic cubes containing~$x$ and included in~$Q_0$, and $\al > \al_0$, where~$\al_0 := \frac{1}{\mu(Q_0)}\int_{Q_0} f \,d\mu$.
By contrast, in Theorem~\ref{CZ2},  the supremum in the definition of~$Mf(x)$ is taken over all dyadic cubes containing~$x$, and $\al > 0$; also, $\Om_{\al}$ is assumed to have finite measure.
Now we are going to prove Theorem~\ref{CZ}.

\medskip
\noindent\emph{Proof of Theorem~\ref{CZ}.}
The proof follows the proof of the (global) Calder\'{o}n--Zygmund decomposition of $f \in L^1(\R^n)$ given in~\cite[Lemma~1, Section IV.3]{Ste93}, noting the following points.

In our (local) setting, we see immediately that for each~$x \in \Omega_{\al}$ there is a maximal dyadic cube containing~$x$ and contained in~$Q_0$, since by definition of~$Mf(x)$ there is a cube~$Q_1 \in x$, $Q_1 \subset Q_0$, with~$\intav_{Q_1} |f(y)| \,d\mu(y) > \al > \al_0 := \intav_{Q_0} |f(y)| \,d\mu(y)$, and there are only finitely many cubes containing~$Q_1$ and contained in~$Q_0$.

We need not explicitly assume that~$\Om_{\al}$ has finite measure. Indeed, for~$\al > \al_0$, $\mu(\Om_{\al}) \leq \mu(Q_0) < \infty$.

By property~\eqref{a2} of the dyadic cubes in~$X$, any two dyadic cubes are nested or disjoint.

In the second inequality in property (i), we obtain~$A_1^N\al$, not~$2^n\al$ as in the Euclidean~$\R^n$ case. The reason for the difference is that for the parent~$\widetilde{Q}_j$ of a cube~$Q_j$ in~$X$, we have~$\mu(\widetilde{Q}_j) \leq A_1^N\mu(Q_j)$. For as mentioned in Section~\ref{subsec:dbl_dydbl_weight}, since~$\mu$ is a doubling measure, it is also dyadic doubling with the dyadic doubling constant~$C_{\text{dydbl}} = A_1^N$ where~$N = 1 + \log_2[2A_0C_1/(c_1\delta)]$.

In property~(ii), the conclusion that~$Mf(x) \leq \al$ is straightforward from the definition of~$\Om_{\al}$. In the Euclidean case, since~$|f(x)| \leq Mf(x)$ by the Lebesgue Differentiation Theorem, one can obtains that~$|f(x)| \leq \al$. However, the Lebesgue Differentiation Theorem may or may not hold in a given space of homogeneous type~$(X,\rho,\mu)$. See also Remark~\ref{rem:x_notin_UQ} for a variant of our Theorem~\ref{CZ} which yields an upper bound for~$|f(x)|$, not just for~$Mf(x)$.
 \hfill\(\Box\)

\medskip
\noindent\emph{Proof of Theorem~\ref{CZ2}.}
The proof of Theorem~\ref{CZ2} is the same as that of Theorem~\ref{CZ}, except the part showing the existence of the maximal dyadic cubes.
For each~$x \in \Omega_{\al}$, we can show that there exists a maximal dyadic cube containing~$x$ by contradiction, using property~\eqref{a5} of dyadic cubes, the fact that~$\mu$ is a doubling measure, and the assumption that~$\mu(\Omega_{\al}) < \infty$.  \hfill\(\Box\)

\medskip
Below we state three remarks related to Theorem~\ref{CZ}. These remarks are also apply to Theorem~\ref{CZ2}.
\begin{rem}\label{rem:x_notin_UQ}
  In Theorem~\ref{CZ},
   if we impose an extra assumption, use a slightly different definition of the dyadic maximal function~$Mf(x)$, and sacrifice other conclusions, then we can obtain a different version of conclusion~(ii). In particular, in addition to the hypotheses of Theorem~\ref{CZ}, we assume that the Lebesgue Differentiation Theorem holds in~$X$.
   In the definition of~$Mf(x)$, the supremum is taken not only over all cubes in a fixed dyadic grid~$\D$ containing~$x$ and included in~$Q_0$, but also over all cubes in a collection~$\{\D^t\}:= \{\D^t : t = 1,\ldots,T\}$ of dyadic grids containing~$x$ and included in~$Q_0$, given by Theorem~\ref{klp1}. Specifically, $Mf(x)$ is now defined as
  \[M f(x) := \sup_{\substack{  Q \in \{\D^t\} \\ Q \ni x,  Q \subset Q_0}}\intq |f(y)| \,d\mu(y).\]
  Then we obtain a collection $\{Q_j: Q_j \in \bigcup_{t =1}^{T}\D^t\}$ of dyadic cubes such that $\Omega_{\al} = \bigcup_j Q_j$. The conclusions that we have to sacrifice are the disjointness of the cubes~$Q_j$, as they are not necessary disjoint; and conclusion~(iii), as it relies on the disjointness of~$Q_j$.

  A different version of conclusion~(ii) in Theorem~\ref{CZ} that we gain is that~$|f(x)| \leq A_1^m\al$, where $m = 1 +\log_2 (8A_0^3\delta^{-3})$.

  This remark is not used in the proofs of our main results, but it is of interest on its own. We omit the proof.
\end{rem}


\begin{rem}
  If $\al_1 > \al_2$, then
$\{x \in X: M f(x) > \al_1\} \subset \{x \in X: M f(x) > \al_2\}$,
and by the maximality of the cubes, each dyadic cube in the decomposition at level $\al_1$ is contained in a dyadic cube in the decomposition at level $\al_2$.
\end{rem}

\begin{rem}\label{rem:CZ_ddbl}
  Given a dyadic doubling weight~$w$, Theorem~\ref{CZ} still holds if we replace~$d\mu$ by~$w\, d\mu$, and $\mu(Q)$ by~$w(Q) = \int_Q \,w\,d\mu$, where~$Q \in \D$. In that case, the constant~$A_1^N$ appearing in property~(i) is replaced by the dyadic doubling constant~$C_{\text{dydbl}}$ of the weight~$w$.
\end{rem}


\subsection{Properties of~$RH_q^{\D}$}\label{subsec:RHr_charac}
In Theorem~\ref{thm2} below, we establish two properties of the class $RH_q^{\D}(X)$  of dyadic reverse-H\"{o}lder-$r$ weights.
\begin{thm}\label{thm2}
   Given a space of homogeneous type $(X, \rho, \mu)$,
  let $\D$ denote any fixed system of dyadic cubes.
  Suppose $w$ is a weight on~$X$ and $w \in RH_q^{\D}(X)$ with the~$RH_q^{\D}(X)$ constant~$[w]_{RH_q^{\D}}$ for some~$q \in (1,\infty)$.
  Then there exists $\e\in (0, \infty)$   such that for all dyadic cubes $Q \in \D$  and all $\mu$-measurable subsets $E$ of $Q$ we have
  \begin{equation}\label{22.1}
    \frac{w(E)}{w(Q)} \leq [w]_{RH_q^{\D}}\bigg(\frac{\mu(E)}{\mu(Q)}\bigg)^{\e}.
  \end{equation}
  Furthermore, there exist $\gamma, \lambda \in (0,1)$ such that
  \begin{equation}\label{23.1}
     w(E) < \gamma w(Q) \quad \Rightarrow \quad \mu(E) < \lambda\mu(Q).
  \end{equation}
\end{thm}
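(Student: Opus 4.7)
The plan is to establish inequality~\eqref{22.1} by a direct application of Hölder's inequality to $w\,\chi_E$ on $Q$, and then to deduce the second implication~\eqref{23.1} by applying~\eqref{22.1} to the complementary set $Q\setminus E$.

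For the first inequality, assuming without loss of generality that $w(Q) > 0$ (otherwise $w(E) = 0$ and the claim is trivial), I would write $w(E) = \int_Q w\,\chi_E\,d\mu$ and apply Hölder's inequality with exponents $q$ and $q' = q/(q-1)$:
\[
w(E) \leq \left(\int_Q w^q\,d\mu\right)^{1/q}\mu(E)^{1/q'}.
\]
The defining reverse-Hölder condition $w \in RH_q^{\D}(X)$ then gives
\[
\left(\int_Q w^q\,d\mu\right)^{1/q} \leq [w]_{RH_q^{\D}}\,\mu(Q)^{1/q}\,\frac{w(Q)}{\mu(Q)} = [w]_{RH_q^{\D}}\,w(Q)\,\mu(Q)^{-1/q'}.
\]
Combining these two displays and dividing by $w(Q)$ yields the desired bound with exponent $\e := 1/q' = (q-1)/q \in (0,1)$.

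For the implication~\eqref{23.1}, I would apply~\eqref{22.1} to the $\mu$-measurable set $F := Q\setminus E \subset Q$, obtaining $w(F)/w(Q) \leq [w]_{RH_q^{\D}}(\mu(F)/\mu(Q))^{\e}$. If $w(E) < \gamma\,w(Q)$, then $w(F) > (1-\gamma)w(Q)$, which forces
\[
\frac{\mu(E)}{\mu(Q)} = 1 - \frac{\mu(F)}{\mu(Q)} < 1 - \left(\frac{1-\gamma}{[w]_{RH_q^{\D}}}\right)^{1/\e} =: \lambda.
\]
Since Jensen's inequality forces $[w]_{RH_q^{\D}} \geq 1$, for any $\gamma \in (0,1)$ we have $\lambda \in [0,1)$; choosing $\gamma$ small enough that $1-\gamma < [w]_{RH_q^{\D}}$ (automatic when $[w]_{RH_q^{\D}} > 1$, and otherwise achieved by shrinking $\gamma$) guarantees $\lambda \in (0,1)$ as required.

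There is no real obstacle: the entire proof rests on a single use of Hölder's inequality together with the definition of $RH_q^{\D}$, and no property of the quasimetric, doubling measure, or dyadic structure beyond the measurability of subsets is needed. The only delicate point is book-keeping the admissible range of $\gamma$ and $\lambda$ to ensure both lie strictly in $(0,1)$, which is why I would treat the edge case $[w]_{RH_q^{\D}} = 1$ separately.
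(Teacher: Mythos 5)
Your proof is correct and follows essentially the same route as the paper, which simply cites the implications $\text{(c)}\Rightarrow\text{(d)}\Rightarrow\text{(e)}$ of Grafakos's Theorem~9.3.3 — that reference's proof is exactly your H\"older argument for~\eqref{22.1} followed by the complementation argument for~\eqref{23.1}. The only slip is in the edge-case book-keeping at the end: since $[w]_{RH_q^{\D}} \ge 1$ and $\gamma > 0$, we have $1-\gamma < 1 \le [w]_{RH_q^{\D}}$ automatically, so $\lambda \in (0,1)$ for \emph{every} $\gamma \in (0,1)$ with no further adjustment, and in any case shrinking $\gamma$ makes $1-\gamma$ larger rather than smaller, so it would not help in the direction you describe.
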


\begin{proof}
Theorem~\ref{thm2} is a generalisation from the Euclidean setting of the implications $\text{(c)} \Rightarrow \text{(d)} \Rightarrow \text{(e)}$ between parts~(c), (d) and~(e) in~\cite[Theorem 9.3.3]{Gra09}. The proof given there works perfectly on spaces of homogeneous type~$(X,\rho,\mu)$.
\end{proof}

\subsection{A reverse-H\"{o}lder weight is also a dyadic reverse-H\"{o}lder weight}\label{subsec:RHr_imply_dRHr}
%
Lemma~\ref{lem:RHr_implies_dRHr} says that if a weight~$w$ is a reverse-H\"{o}lder weight, then $w$ is also a dyadic reverse-H\"{o}lder weight. In other words, if~$w$  has the reverse-H\"{o}lder-$q$ property w.r.t.~balls in~$X$, then~$w$ also has the reverse-H\"{o}lder-$q$ property w.r.t.~each of the systems~$\D^t, t \in \{1, \ldots,T\}$, of dyadic cubes on~$X$.
\begin{lem}\label{lem:RHr_implies_dRHr}
  Suppose $X$ is a space of homogeneous type $(X, \rho, \mu)$ equipped with the systems~$\D^t$ of dyadic cubes, $t \in \{1, \ldots,T\}$, given by Theorem~\ref{klp1}.
  Suppose~$w$ is a weight on~$X$ and $w \in RH_q(X)$ with the $RH_q(X)$ constant~$[w]_{RH_q}$, where~$q \in (1,\infty)$. Then for the same~$q$, $w \in RH_q^{\D^t}(X)$, w.r.t.~each of the systems~$\D^t$ of dyadic cubes.
  The~$RH_q^{\D^t}(X)$ constant of~$w$ is $[w]_{RH_q^{\D^t}} := [w]_{RH_q} A_1^{m/q}C_{\text{dbl}}^m$,
  where~$A_1$ and~$C_{\text{dbl}}$ are the doubling constants of~$\mu$ and~$w$, respectively, and $m = 1 + \log_2\frac{C_1}{c_1}$ with $c_1$ and $C_1$ as in property~\eqref{a5}.
\end{lem}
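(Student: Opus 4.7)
The plan is to exploit the geometric sandwich of every dyadic cube between two concentric quasiballs, afforded by property~\eqref{a5} of Definition~\ref{dyadiccubes}, and then transport the reverse-H\"older inequality from the big quasiball down to the cube using doubling to absorb the distortion. Concretely, fix~$t \in \{1,\ldots,T\}$ and a cube~$Q \in \D^t$ of generation~$k$ with centre~$x_Q$. Set
\[
\widetilde{B}_1 := \widetilde{B}(x_Q, c_1\delta^k), \qquad \widetilde{B}_2 := \widetilde{B}(x_Q, C_1\delta^k),
\]
so that~$\widetilde{B}_1 \subset Q \subset \widetilde{B}_2$. Since~$C_1/c_1 \leq 2^{m-1}$ for~$m = 1 + \log_2(C_1/c_1)$, iterating the doubling condition~\eqref{eq:dbl_measure_2} for~$\mu$ and the analogous doubling inequality~\eqref{eq:dbl_measure_4} for~$w$ yields $\mu(\widetilde{B}_2) \leq A_1^m\mu(\widetilde{B}_1)$ and $w(\widetilde{B}_2) \leq C_{\mathrm{dbl}}^m w(\widetilde{B}_1)$.

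The computation will then proceed in two parallel estimates. For the numerator in the $RH_q^{\D^t}$ quotient, monotonicity of the integral combined with~$\widetilde{B}_1 \subset Q \subset \widetilde{B}_2$ gives
\[
\bigg(\frac{1}{\mu(Q)}\int_Q w^q \,d\mu\bigg)^{1/q}
\leq \bigg(\frac{\mu(\widetilde{B}_2)}{\mu(\widetilde{B}_1)}\bigg)^{1/q}\bigg(\frac{1}{\mu(\widetilde{B}_2)}\int_{\widetilde{B}_2} w^q \,d\mu\bigg)^{1/q}
\leq A_1^{m/q}\,[w]_{RH_q}\,\frac{w(\widetilde{B}_2)}{\mu(\widetilde{B}_2)},
\]
where the last step applies the hypothesis $w \in RH_q(X)$ on the quasiball~$\widetilde{B}_2$. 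For the denominator, the same inclusions give $w(Q) \geq w(\widetilde{B}_1)$ and $\mu(Q) \leq \mu(\widetilde{B}_2)$, hence
\[
\bigg(\frac{1}{\mu(Q)}\int_Q w \,d\mu\bigg)^{-1} \leq \frac{\mu(\widetilde{B}_2)}{w(\widetilde{B}_1)}.
\]
Multiplying the two bounds, the factor~$\mu(\widetilde{B}_2)$ cancels and the doubling of~$w$ converts~$w(\widetilde{B}_2)/w(\widetilde{B}_1)$ into~$C_{\mathrm{dbl}}^m$, giving the constant~$[w]_{RH_q}A_1^{m/q}C_{\mathrm{dbl}}^m$ uniformly in~$Q \in \D^t$.

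The only non-cosmetic issue is that the statement tacitly invokes the doubling constant~$C_{\mathrm{dbl}}$ of~$w$, while the hypothesis only asserts~$w \in RH_q(X)$. This is not a real obstacle: a reverse-H\"older weight on a space of homogeneous type is automatically doubling (as can be seen by applying the $RH_q$ inequality on a quasiball~$\widetilde{B}_2$ together with H\"older's inequality in reverse on a concentric sub-quasiball~$\widetilde{B}_1$, and using the $\mu$-doubling inequality~\eqref{eq:dbl_measure_2}), so the constant~$C_{\mathrm{dbl}}$ is well defined and quantitatively controlled by~$[w]_{RH_q}$ and~$A_1$. I would either insert a short preliminary lemma recording this implication or cite the analogous statement from the $A_p$/$RH_q$ theory on spaces of homogeneous type. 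Beyond that, the argument is a clean transference from quasiballs to dyadic cubes; the two-sided sandwich in~\eqref{a5} is doing all the geometric work, and doubling is doing all the analytic work.
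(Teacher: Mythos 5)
Your central computation is essentially the same as the paper's: fix a dyadic cube $Q$ of generation $k$, sandwich it between the quasiballs $\widetilde{B}_1 = \widetilde{B}(x_Q, c_1\delta^k) \subset Q \subset \widetilde{B}(x_Q, C_1\delta^k) = \widetilde{B}_2$ via property~\eqref{a5}, apply $RH_q$ to $\widetilde{B}_2$, and absorb the mismatch with the doubling of $\mu$ and of $w$. The paper runs this as a single chain starting from $(\mu(Q))^{-1}\int_Q w^q$ and ending at $(\mu(Q))^{-1}\int_Q w$; you split it into two parallel estimates and multiply at the end. Same argument, same constant $[w]_{RH_q}A_1^{m/q}C_{\mathrm{dbl}}^m$.

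The issue is your closing remark, where you claim that $C_{\mathrm{dbl}}$ is well defined because an $RH_q$ weight on a space of homogeneous type is ``automatically doubling,'' and sketch an argument: H\"older on $\widetilde{B}_1$ plus $RH_q$ on $\widetilde{B}_2$. That argument does not give you what you want. H\"older on $\widetilde{B}_1$ gives $w(\widetilde{B}_1) \le \bigl(\int_{\widetilde{B}_1} w^q\,d\mu\bigr)^{1/q}\mu(\widetilde{B}_1)^{1-1/q}$; enlarging the domain to $\widetilde{B}_2$ and then using $RH_q$ on $\widetilde{B}_2$ yields
\[
w(\widetilde{B}_1) \;\le\; [w]_{RH_q}\,w(\widetilde{B}_2)\Bigl(\frac{\mu(\widetilde{B}_1)}{\mu(\widetilde{B}_2)}\Bigr)^{1-1/q} \;\le\; [w]_{RH_q}\,w(\widetilde{B}_2),
\]
which is a \emph{weaker} form of the trivial monotonicity $w(\widetilde{B}_1) \le w(\widetilde{B}_2)$. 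Doubling is the reverse bound, $w(\widetilde{B}_2) \le C\,w(\widetilde{B}_1)$, and neither H\"older nor $RH_q$ supplies it; what you are reproving is essentially the small-set absorption property of Theorem~\ref{thm2}, not doubling. In fact, on a general space of homogeneous type the implication $RH_q \Rightarrow$ doubling is precisely one of the obstructions that forced the extra hypotheses in \cite{KK11} and \cite{KS14}, which the paper's introduction highlights, so the claim should not be taken for granted. The paper's own proof simply invokes ``$w$ is doubling with constant $C_{\mathrm{dbl}}$'' without deriving it; you are right to notice the tension, but a correct remedy would be to add doubling of $w$ to the hypotheses (or to verify it in the specific application, where $w\,d\mu = d\mu_f$ and $\mu_f$ is shown to be doubling in Lemma~\ref{lem:nu_dbl_2}), not to assert that it is automatic.
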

\begin{proof}
  Let~$\D$ denote any of the systems~$\D^t$.
  Fix a dyadic cube $Q  \in \mathscr{D}$ of generation $k \in \mathbb{Z}$, centred at $z$.
Let~$B_2 := B(z, C_1\delta ^k)$ and~$ m := 1 + \log_2(C_1/c_1)$, where~$c_1$ and~$C_1$ are from property~\eqref{a5} of dyadic cubes.
By properties~\eqref{a5} of dyadic cubes and~\eqref{eq:dbl_measure_2} of doubling measures, together with the facts that $w \geq 0$ for all $x \in X$, $w \in RH_q(X)$ with constant~$[w]_{RH_q}$ and $w$ is doubling with constant~$C_{\text{dbl}}$, we obtain
\begin{eqnarray*}
  \bigg(\frac{1}{\mu(Q)}\int_{Q} w^q \,d\mu\bigg)^{1/q} &\leq&
   \bigg(\frac{A_1^m }{\mu(B_2)}\int_{B_2} w^q \,d\mu\bigg)^{1/q}\\
  &\leq&  [w]_{RH_q} A_1^{m/q}\frac{1 }{\mu(B_2)}\int_{B_2} w \,d\mu \\
  &\leq& [w]_{RH_q} A_1^{m/q}C_{\text{dbl}}^m \frac{1 }{\mu(Q)} \int_{Q} w \,d\mu \\
  &=:& [w]_{RH_q^{\D^t}} \intav_Q w \,d\mu,
\end{eqnarray*}
where $[w]_{RH_q^{\D^t}} := [w]_{RH_q} A_1^{m/q}C_{\text{dbl}}^m$.
Thus $w \in RH_q^{\D^t}(X)$.
This completes the proof of Lemma~\ref{lem:RHr_implies_dRHr}.
\end{proof}
\subsection{A dyadic reverse-H\"{o}lder weight is also a dyadic $A_p$ weight}\label{subsec:RhripliesAp}
\begin{thm}\label{thm3}
  Given a space of homogeneous type $(X, \rho, \mu)$, such that~$\mu$ is Borel regular,
  let $\D$ denote any fixed system of dyadic cubes in~$X$.
  Suppose $w$ is a weight on~$X$ and $w \in RH_q^{\D}(X)$ for some~$q \in (1,\infty)$.
  Then $w \in A_p^{\D}(X)$ for some~$p \in (1,\infty)$. 
\end{thm}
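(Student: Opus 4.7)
\medskip
\noindent \emph{Proof plan.}
The plan is to extract from the reverse-H\"older hypothesis a quantitative lower bound on $w$ at small scales, which translates into local integrability of $w^{-r}$ for some $r>0$ and, by the standard reformulation of the $A_p$ condition, into $w \in A_p^{\D}(X)$ with $p = 1 + 1/r$. Concretely, I aim to establish
\[ \intav_{Q} w^{-r}\, d\mu \leq K \Bigl(\intav_{Q} w\, d\mu\Bigr)^{-r} \qquad (Q \in \D,\ 0 < r < s), \]
with $s,K > 0$ depending only on $q$, $[w]_{RH_q^{\D}}$, and the doubling constants of $\mu$ and $w$. Rearranging yields $(\intav_Q w)(\intav_Q w^{-1/(p-1)})^{p-1} \leq K^{p-1}$, which is the $A_p^{\D}(X)$ condition for any $p > 1 + 1/s$.

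The analytic input is Theorem~\ref{thm2}, which supplies both $A_\infty$-type estimates associated to $RH_q^{\D}$ weights: the upper bound $w(E)/w(Q) \leq [w]_{RH_q^{\D}} (\mu(E)/\mu(Q))^{\e}$ and the ``small $w$ forces small $\mu$'' implication with constants $\gamma,\lambda \in (0,1)$. From these I would derive, by an iterated stopping-time construction, the level-set decay
\[ \mu\bigl(\{x \in Q : w(x) < t\}\bigr) \leq C' \mu(Q)\, (t/w_Q)^{s} \qquad (t>0), \]
where $w_Q := \intav_Q w\, d\mu$. Fix $\beta \in (0,\gamma)$; inside $Q$, let $\Omega_1 \subset Q$ be the union of the maximal dyadic subcubes whose $w$-average falls below $\beta w_Q$ (they exist by a Calder\'on--Zygmund-type argument using dyadic doubling of $w$, which is automatic for $RH_q^{\D}$ weights). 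Then $w(\Omega_1)/w(Q) \leq \beta < \gamma$, and Theorem~\ref{thm2} gives $\mu(\Omega_1) \leq \lambda \mu(Q)$. Iterating inside each stopping cube produces $\Omega_k \subset \Omega_{k-1}$ with $\mu(\Omega_k) \leq \lambda^k \mu(Q)$, and the level-$k$ stopping cubes have $w$-averages below $\beta^k w_Q$. Combined with dyadic doubling of $w$ and $\mu$ and the dyadic Lebesgue Differentiation Theorem---valid in $(X,\rho,\mu)$ via martingale convergence of dyadic averages against the doubling measure---this forces $w(x) \gs (\beta/C_{\text{dydbl}})^{k} w_Q$ $\mu$-a.e.\ on $Q \setminus \Omega_k$, and choosing $t$ comparable to $(\beta/C_{\text{dydbl}})^{k} w_Q$ yields the displayed level-set bound with $s = \log\lambda / \log(\beta/C_{\text{dydbl}})$.

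The conclusion is then the layer-cake identity $\int_Q w^{-r}\, d\mu = r \int_0^\infty u^{-r-1} \mu(\{w<u\}\cap Q)\, du$, split at $u=w_Q$: the level-set bound makes the small-$u$ piece integrable precisely when $r < s$, while the large-$u$ piece is dominated by $\mu(Q) w_Q^{-r}$. Thus $\intav_Q w^{-r} \leq K w_Q^{-r}$ uniformly in $Q \in \D$, and $w \in A_p^{\D}(X)$ for $p = 1 + 1/r$. The main obstacle is the stopping-time iteration: one has to verify that the geometric decay $\mu(\Omega_k) \leq \lambda^k \mu(Q)$ genuinely propagates (the second conclusion of Theorem~\ref{thm2} must be applied inside each preceding stopping cube, not only inside $Q$), and that the dyadic Lebesgue Differentiation Theorem, together with dyadic doubling of both $w$ and $\mu$, delivers the required $\mu$-a.e.\ lower bound on $w$ outside $\Omega_k$ that drives the level-set estimate.
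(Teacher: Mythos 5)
Your proposal is correct and follows essentially the same route as the paper: it reproduces the Stein-style argument via Theorem~\ref{thm2}, a Calder\'{o}n--Zygmund (stopping-time) iteration on dyadic cubes, the Lebesgue Differentiation Theorem (available here because $\mu$ is Borel regular), and a geometric-series/layer-cake integration over the level sets. The paper phrases the iteration through the weighted dyadic maximal function $M_w(w^{-1}\chi_{Q_0})$ and sums over the layers $E^s\setminus E^{s+1}$, whereas you build the stopping cubes directly and package the same decay as the estimate $\mu(\{w<t\}\cap Q)\lesssim \mu(Q)(t/w_Q)^s$ before integrating; these are two formulations of identical content, and the ``obstacles'' you flag at the end (propagating the thm2 implication inside each stopping cube, and using dyadic LDT plus dyadic doubling of $w$ and $\mu$ to get the a.e.\ lower bound) are exactly the steps the paper's proof also carries out.
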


\begin{proof}
Theorem~\ref{thm3} is a generalisation to~$(X,\rho,\mu)$ of its analogue in the Euclidean setting; this Euclidean analogue is established during the proof of Theorem~3 in~\cite[Section~5.1]{Ste93}. In the original proof, the two main ingredients are the Calder\'{o}n--Zygmund decomposition of~$f \in L^1(\R^n)$ given in~\cite[Lemma~1, Section~IV.3]{Ste93}, and the property of~$w \in RH_q(\R^n)$ given in~\cite[Theorem~9.3.3.(e)]{Gra09}.
We have generalised both of these ingredients to the setting of spaces of homogeneous type~$(X,\rho,\mu)$: see Theorems~\ref{CZ} and~\ref{thm2} above.

Following the structure of the original proof in~\cite{Ste93}, to show that~$w \in A_p^{\D}(X)$ for some~$p \in (1,\infty)$, it suffices to show that there exist some~$c >0$ and~$\bar{q} >1$ such that for each cube~$Q_0 \in \D$ we have
 \begin{equation}\label{14.1}
   \bigg(\frac{1}{w(Q_0)}\int_{Q_0} w^{1-\bar{q}} \,d\mu \bigg)^{1/\bar{q}} \leq c\frac{\mu(Q_0)}{w(Q_0)},
 \end{equation}
where as usual~$w(Q) = \int_Q w \,d\mu$.
Note that in~\cite{Ste93}, the cube~$Q_0$ is normalised such that~$\mu(Q_0) = w(Q_0)=1$, which leads to~$\al_0 := \mu(Q_0)/ w(Q_0) = 1$. However, the proof works without this normalisation.
To make the calculations more explicit, we work with a general (non-normalised) dyadic cube~$Q_0 \in \D$.

Fix a cube~$Q_0 \in \D$.
Let~$f = w^{-1}\chi_{Q_0}$.
We will apply our (local) Calder\'{o}n--Zygmund decomposition (Theorem~\ref{CZ}) to the \emph{dyadic maximal function with weight $w$}, defined by
  \[M_w f(x) := \sup_{\substack{Q \ni x \\ Q \in \D, Q \subset Q_0}}\frac{1}{w(Q)}\int_Q |f(y)|w(y) \,d\mu(y)
  =:  \sup_{\substack{Q \ni x \\ Q \in \D, Q \subset Q_0}}\frac{1}{w(Q)}\int_Q |f(y)|\,d\mu_2(y),\]
  where the supremum is taken over all dyadic cubes in~$\D$ containing $x$ and contained in~$Q_0$.
Note that in~\cite{Ste93}, the supremum is taken over all dyadic cubes in~$\D$ containing $x$, without requiring~$Q \subset Q_0$.

Notice that the weighted maximal function~$M_w f$ is the same as the unweighted maximal function~$M f$ defined in~\eqref{eq:CZ1} but with the measure~$\mu_2$ in place of~$\mu$.
Moreover, since~$w$ is a dyadic doubling weight, as noted in Remark~\ref{rem:CZ_ddbl}, Theorem~\ref{CZ} also holds for~$M_w f$. The only difference is that the constant~$A_1^N$ appearing in property~(ii) is replaced by the dyadic doubling constant~$C_{\text{dydbl}} > 1$ of~$w$.

Let $\al_s = C_{\text{dydbl}}^{Ms}\al_0$, where~$M, s \in \N$. Note that  $\al_s$ is the substitute for~$2^{Ms}$ in~\cite{Ste93}.
Define the set $E^s := \{x \in Q_0: M_w f(x) > \al_s\}$.
Again, following the proof in~\cite{Ste93}, we can show that
\begin{equation}\label{14.2}
\mu(E^s) < \lambda^s \mu(Q_0),
\end{equation}
where here~$\lambda \in (0,1)$ is from Theorem~\ref{thm2}.

Now we are ready to prove~\eqref{14.1}.
We note that since~$\mu$ is Borel-regular, it is Borel semiregular. As noted in~\cite{AM15}, it follows that the Lebesgue Differentiation Theorem holds in~$X$.
So we have
\begin{eqnarray}\label{eq:lhs1}
  \frac{1}{w(Q_0)}\int_{Q_0} w^{1-\bar{q}} \,d\mu
  = \frac{1}{w(Q_0)}\int_{Q_0} f^{\bar{q}-1} \,wd\mu
  \leq \frac{1}{w(Q_0)}\int_{Q_0} [M_w f(x)]^{\bar{q}-1} \,d\mu.
\end{eqnarray}
The integral~\eqref{eq:lhs1} can be broken into
\[\underbrace{\frac{1}{w(Q_0)}\int_{Q_0\backslash E^0} [M_w f(x)]^{\bar{q}-1} \,d\mu}_{\textup{(I)}}
+ \underbrace{\frac{1}{w(Q_0)} \sum_{s=0}^{\infty} \int_{E^s \backslash E^{s+1}} [M_w f(x)]^{\bar{q}-1} \,d\mu}_{\textup{(II)}}. \]
Note that because of our slightly different definition of~$M_wf$, the sets over which the integrals~(I) and~(II) are evaluated are slightly simpler than those in~\cite{Ste93}.

The integral~(I) is majorised by~$(\mu(Q_0)/ w(Q_0))^{\bar{q}}$.
Using~\eqref{14.2} we can show
\[\text{(II)} < \bigg(\frac{\mu(Q_0)}{w(Q_0)}\bigg)^{\bar{q}} \sum_{s=1}^{\infty}  C_{\text{dydbl}}^{M(s+1)(\bar{q}-1)}\lambda^s.\]
Since $\lambda < 1$, the geometric series $\sum_{s=0}^{\infty} C_{\text{dydbl}}^{M(s+1)(\bar{q}-1)}\lambda^s$ converges if $\bar{q}$ is sufficiently close to~1, specifically, if $\bar{q} < \log \lambda^{-1} / (M \log C_{\text{dydbl}}) + 1$. We have therefore proved~\eqref{14.1}. In turn, this shows that $w \in A_p^{\D}$ for~$p = \bar{q}/(\bar{q}-1)$ for each~$\bar{q}$ in this range, completing the proof of Theorem~\ref{thm3}.
\end{proof}


\subsection{The logarithm of a dyadic $A_p$ weight is in $\bmo_{\mathscr{D}}$}\label{subsec:logAp_BMO}
In Theorem~\ref{AplogW} below, we show that the logarithm of an $A_p^{\mathscr{D}}(X)$ weight is in $\bmo_{\mathscr{D}}(X)$.
This result is motivated by its analogues on Euclidean spaces~$\R^n$.
See for example \cite[Exercise~9.2.3]{Gra09} and~\cite[Lemma 2]{PWX11}.
\begin{thm}\label{AplogW}
 Given a space of homogeneous type $(X, \rho, \mu)$,
 let $\D$ denote any fixed system of dyadic cubes in~$X$.
 Suppose $w$ is a weight on $X$ and~$w \in A_p^{\mathscr{D}}(X)$ for some $p \in (1,\infty)$.
 Then $\log w(x) \in \bmo_{\mathscr{D}}(X)$ with
 \[\|\log w\|_{\bmo_{\mathscr{D}}} \leq [w]_{A_p^{\mathscr{D}}} + (p-1)[w]_{A_p^{\mathscr{D}}}^{1/(p-1)}.\]
\end{thm}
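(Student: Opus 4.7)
The plan is to fix an arbitrary dyadic cube $Q \in \mathscr{D}$ and prove the pointwise-in-$Q$ bound
\[
\frac{1}{\mu(Q)}\int_Q |\log w - (\log w)_Q| \, d\mu \leq [w]_{A_p^{\mathscr{D}}} + (p-1)[w]_{A_p^{\mathscr{D}}}^{1/(p-1)},
\]
from which the desired $\bmo_{\mathscr{D}}$ control follows by taking a supremum over $Q$. The first step is the standard split
\[
|\log w - (\log w)_Q| = (\log w - (\log w)_Q)^+ + ((\log w)_Q - \log w)^+,
\]
which I would estimate piece by piece, arranging things so that the two resulting summands on the right-hand side arise from dual applications of the $A_p^{\mathscr{D}}$ condition.

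The only tools needed are the elementary inequality $\log t \leq t-1$ for $t>0$ and Jensen's inequality; no Calder\'{o}n--Zygmund machinery or John--Nirenberg argument is required. For the positive part, applying $\log t \leq t-1$ pointwise with $t = w(x)/\exp((\log w)_Q)$ gives
\[
(\log w(x) - (\log w)_Q)^+ \leq \exp(-(\log w)_Q)\, w(x),
\]
so integrating over $Q$ produces the bound $\exp(-(\log w)_Q)\, w_Q$, where $w_Q := \intav_Q w\, d\mu$. Jensen's inequality for the concave function $\log$ applied to $w^{-1/(p-1)}$ yields $\exp(-(\log w)_Q/(p-1)) \leq \intav_Q w^{-1/(p-1)} \, d\mu$, and raising to the power $p-1$ and multiplying by $w_Q$ produces
\[
\intav_Q (\log w - (\log w)_Q)^+ \, d\mu \leq w_Q \Big(\intav_Q w^{-1/(p-1)} \, d\mu\Big)^{p-1} \leq [w]_{A_p^{\mathscr{D}}}
\]
by the very definition of $A_p^{\mathscr{D}}$. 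This accounts for the first summand.

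For the negative part, I would run the same argument with $w$ replaced by its $A_p$-conjugate weight $u := w^{-1/(p-1)}$, exploiting the identity $((\log w)_Q - \log w)^+ = (p-1)(\log u - (\log u)_Q)^+$. The positive-part estimate applied to $u$ gives $\intav_Q (\log u - (\log u)_Q)^+ \, d\mu \leq u_Q \exp(-(\log u)_Q)$; Jensen's inequality for the convex function $\exp$ then bounds $\exp(-(\log u)_Q) = \exp((\log w)_Q/(p-1)) \leq w_Q^{1/(p-1)}$, and rewriting the $A_p^{\mathscr{D}}$ condition as $u_Q \leq [w]_{A_p^{\mathscr{D}}}^{1/(p-1)} w_Q^{-1/(p-1)}$ forces $u_Q \exp(-(\log u)_Q) \leq [w]_{A_p^{\mathscr{D}}}^{1/(p-1)}$. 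Multiplying by $p-1$ and summing with the previous estimate produces the target bound. No real obstacle arises: everything reduces to the two convexity inequalities, and the only care required is keeping the exponent bookkeeping straight when dualising between $w$ and $w^{-1/(p-1)}$.
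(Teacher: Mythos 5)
Your proof is correct and is essentially the paper's intended argument. The paper omits the proof, pointing to the Euclidean versions in Grafakos and in Pipher--Ward--Xiao, and states the key estimates as Lemma~\ref{lem:osc_log_Ap}: $\intav_{Q}e^{\lambda-\lambda_Q}\,d\mu\leq [w]_{A_p^{\D}}$ and $\intav_{Q}e^{(\lambda_Q-\lambda)/(p-1)}\,d\mu\leq [w]_{A_p^{\D}}^{1/(p-1)}$ with $\lambda=\log w$, from which the theorem follows via $a^{+}\leq e^{a}$; your decomposition into positive and negative parts, $\log t\leq t-1$, Jensen, and the dual application of the $A_p^{\D}$ condition is exactly this argument with the intermediate lemma inlined, and it reproduces the stated constant $[w]_{A_p^{\D}}+(p-1)[w]_{A_p^{\D}}^{1/(p-1)}$.
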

To establish Theorem~\ref{AplogW}, we need to use the following lemma
about the oscillation of the logarithm of an~$A_p$ weight.
\begin{lem}\label{lem:osc_log_Ap}
 Given a space of homogeneous type $(X, \rho, \mu)$,
 let $\D$ denote any fixed system of dyadic cubes in~$X$.
 Suppose $w$ is a weight on $X$ and~$w \in A_p^{\mathscr{D}}(X)$ for some $p \in (1,\infty)$.
 Let~$\lambda(x) = \log w(x)$. Then
 \begin{equation*}
 \intav_{Q}e^{\lambda(x)-\lambda_Q} \,d\mu
 \leq [w]_{A_p^{\D}} \quad \text{ and } \quad
 \intav_{Q}e^{\frac{\lambda_Q-\lambda(x)}{p-1}} \,d\mu
 \leq [w]_{A_p^{\D}}^{1/(p-1)}.
\end{equation*}
\end{lem}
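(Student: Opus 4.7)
The plan is to derive both inequalities by combining Jensen's inequality with a direct algebraic rearrangement of the $A_p^{\D}$ condition. Writing $w(x)=e^{\lambda(x)}$ so that $\lambda_Q=\intav_Q\log w\,d\mu$, the two quantities to estimate can be factored as
\[
\intav_Q e^{\lambda(x)-\lambda_Q}\,d\mu=e^{-\lambda_Q}\intav_Q w\,d\mu,\qquad
\intav_Q e^{(\lambda_Q-\lambda(x))/(p-1)}\,d\mu=e^{\lambda_Q/(p-1)}\intav_Q w^{-1/(p-1)}\,d\mu.
\]

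The first step is to record the two Jensen-type bounds that will do all the work. Applying Jensen's inequality to the convex exponential, with the integrand $\log w$ and then with the integrand $-\log w/(p-1)$ against the probability measure $\mu/\mu(Q)$ on $Q$, I get
\[
e^{\lambda_Q}\leq \intav_Q w\,d\mu,\qquad
e^{-\lambda_Q/(p-1)}\leq \intav_Q w^{-1/(p-1)}\,d\mu.
\]
These are the usual geometric-mean/arithmetic-mean estimates underlying $A_p$ theory, and require nothing beyond the convexity of $\exp$.

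For the first inequality of the lemma, I raise the second Jensen bound to the power $-(p-1)$, which reverses the inequality since $p>1$, obtaining $\bigl(\intav_Q w^{-1/(p-1)}\,d\mu\bigr)^{-(p-1)}\leq e^{\lambda_Q}$. Multiplying through by $\intav_Q w\,d\mu$ and invoking the definition of $[w]_{A_p^{\D}}$ gives $\intav_Q w\,d\mu\leq [w]_{A_p^{\D}}\,e^{\lambda_Q}$, which after dividing by $e^{\lambda_Q}$ is exactly the first claim. For the second inequality, I rewrite the $A_p^{\D}$ bound as $\intav_Q w^{-1/(p-1)}\,d\mu\leq [w]_{A_p^{\D}}^{1/(p-1)}\bigl(\intav_Q w\,d\mu\bigr)^{-1/(p-1)}$, and then use the first Jensen bound, raised to the power $-1/(p-1)$ (again flipping direction), to replace $\bigl(\intav_Q w\,d\mu\bigr)^{-1/(p-1)}$ by $e^{-\lambda_Q/(p-1)}$. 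Multiplying the resulting estimate by $e^{\lambda_Q/(p-1)}$ yields the second claim. The lemma is essentially a one-step algebraic manipulation, so there is no real obstacle; the only point requiring care is tracking the direction of the inequality each time Jensen is applied and each time a quantity is raised to a negative power.
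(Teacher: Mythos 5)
Your proof is correct and is precisely the standard Euclidean argument that the authors invoke when they state that the proofs of Theorem~\ref{AplogW} and Lemma~\ref{lem:osc_log_Ap} ``proceed as in the Euclidean case'': both bounds follow, cube by cube, from Jensen's inequality applied to $\exp$ combined with an algebraic rearrangement of the $A_p^{\D}$ constant. The only implicit assumption worth noting is that $\intav_Q w\,d\mu$ and $\intav_Q w^{-1/(p-1)}\,d\mu$ are finite and strictly positive before raising to negative powers or dividing, which is automatic here since $w\in A_p^{\D}$ forces $0<w<\infty$ $\mu$-a.e.
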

The proofs of Theorem~\ref{AplogW} and Lemma~\ref{lem:osc_log_Ap} are straightforward and proceed as in the Euclidean case. We omit their proofs.
\section{Further Results on Spaces of Homogeneous Type~$(X,\rho,\mu)$}
\label{sec:quasi_result}
In this section, we present further results on the setting of spaces of homogeneous type, which are necessary for the later sections, as well as having their own interest.
In Sections~\ref{subsec:mu_f dbl} and~\ref{subsec:m_f Borel regular}, we show that the measure induced by an $\eta$-quasisymmetric map is doubling and Borel regular, respectively.
In Section~\ref{subsec:Vitali covering thm}, we generalise the Vitali Covering Theorem.
In Section~\ref{subsec:generalise_Radon_Nikodym_thm}, we establish a generalisation of the Radon--Nikodym Theorem.

To simplify the notation, in this section only,  when we say~$X$, we mean~$(X,\rho,\mu)$. When we say the ball $B(x,r)$, we mean the quasiball~$\widetilde{B}(x,r)$.
Note that any results proved on spaces of homogeneous type~$(X, \rho, \mu)$ also hold on metric measure spaces~$(X,d,\mu)$.

\subsection{The measure induced by a quasisymmetric map is doubling}\label{subsec:mu_f dbl}
Given a $\mu$-measurable set~$E \subset X$ and an $\eta$-quasisymmetric map from~$X$ onto itself, we recall the pullback measure~$\mu_f$ by~$\mu_f(E) := \mu(f(E))$.
In Lemma~\ref{lem:nu_dbl_2} below, we will prove that~$\mu_f$ is doubling, under the extra assumption that $X$ has nonempty $\tau$-annuli (Definition~\ref{Xspace}).
\begin{lem}\label{lem:nu_dbl_2}
Let~$(X,\rho,\mu)$ be a space of homogeneous type that has nonempty $\tau$-annuli for some~$\tau \in (0,1)$.
Suppose $f: X \rightarrow X$ is an $\eta$-quasisymmetric map from~$X$ onto itself.
Then~$\mu_f$ is a doubling measure with doubling constant~$C_{\mu_f}$ depending on~$A_1, \tau$ and~$\eta$.
\end{lem}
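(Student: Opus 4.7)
The goal is to show that for every $x \in X$ and every $r>0$,
\[
\mu_f(\widetilde{B}(x,2r)) \leq C_{\mu_f}\, \mu_f(\widetilde{B}(x,r)),
\]
and since $\mu_f(E)=\mu(f(E))$, this will follow if one can sandwich the image $f(\widetilde{B}(x,2r))$ inside a quasiball around $f(x)$ and $f(\widetilde{B}(x,r))$ outside a smaller one, and then invoke the doubling property of $\mu$ on the space $(X,\rho,\mu)$. The nonempty $\tau$-annuli hypothesis is what supplies a convenient ``pivot'' point for the two applications of quasisymmetry.

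First I would dispose of the trivial case where $X \setminus \widetilde{B}(x,r)$ is empty, in which $\widetilde{B}(x,r)=\widetilde{B}(x,2r)=X$ and the doubling inequality is automatic. Otherwise, the nonempty $\tau$-annuli property of Definition~\ref{Xspace} produces a point $b \in X$ with $\tau r < \rho(x,b) < r$. This $b$ will play the role of a reference point in both directions. For any $a \in \widetilde{B}(x,2r)$ one has $\rho(x,a)/\rho(x,b) < 2/\tau$, so the $\eta$-quasisymmetry of $f$ gives
\[
\rho(f(x),f(a)) \leq \eta(2/\tau)\, \rho(f(x),f(b)) =: R,
\]
hence $f(\widetilde{B}(x,2r)) \subset \overline{\widetilde{B}}(f(x),R) \subset \widetilde{B}(f(x),2R)$. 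Conversely, for any $a \notin \widetilde{B}(x,r)$ one has $\rho(x,b)/\rho(x,a) < 1$, so $\eta$-quasisymmetry yields
\[
\rho(f(x),f(b)) \leq \eta(1)\, \rho(f(x),f(a)),
\]
and since $f$ is a bijection of $X$ onto itself, taking contrapositives one obtains
\[
\widetilde{B}\bigl(f(x),\, \rho(f(x),f(b))/\eta(1)\bigr) \subset f(\widetilde{B}(x,r)).
\]

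Putting these two inclusions together gives
\[
\mu_f(\widetilde{B}(x,2r)) = \mu(f(\widetilde{B}(x,2r))) \leq \mu\bigl(\widetilde{B}(f(x),2R)\bigr),
\]
and
\[
\mu\bigl(\widetilde{B}(f(x),\rho(f(x),f(b))/\eta(1))\bigr) \leq \mu(f(\widetilde{B}(x,r))) = \mu_f(\widetilde{B}(x,r)).
\]
The two balls around $f(x)$ have radii whose ratio is $2R \cdot \eta(1)/\rho(f(x),f(b)) = 2\eta(2/\tau)\eta(1)$, a constant depending only on $\eta$ and $\tau$. Applying the general doubling estimate \eqref{eq:dbl_measure_2} for $\mu$ with doubling constant $A_1$ then yields
\[
\mu_f(\widetilde{B}(x,2r)) \leq A_1^{\,1+\log_2(2\eta(2/\tau)\eta(1))}\, \mu_f(\widetilde{B}(x,r)),
\]
so one may take $C_{\mu_f} = A_1^{\,1+\log_2(2\eta(2/\tau)\eta(1))}$, which depends only on $A_1$, $\tau$ and $\eta$, as claimed.

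The main conceptual obstacle is purely bookkeeping: one must be careful about open versus closed quasiballs (the closed ball of radius $R$ sits inside the open ball of radius $2R$, which is why the factor of $2$ appears in the final constant) and about the vacuous case of the $\tau$-annuli assumption. Nothing nontrivial is required beyond quasisymmetry, the annulus condition, and the doubling hypothesis on $\mu$; in particular no finer structure of $X$ (such as connectedness, local compactness, or a Poincaré inequality) enters here.
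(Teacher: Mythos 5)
Your proof is correct, and it takes a genuinely different (and simpler) route than the paper's. The paper's argument is routed through a technical ``Distortion Lemma'' (Lemma~\ref{lem:nu_dbling}) together with a helper Proposition~\ref{prop:nu dbl}: it chooses constants $\theta,k$ with $\eta(\theta)\le 1/3$ and $k\ge\max(1/\theta,1/\tau)$, introduces several reference points $y\in B(x,r)\setminus B(x,r/k)$, $z\in B(x,kr)\setminus B(x,r)$, $y_1\in B(x,2k^3r)\setminus B(x,2k^2r)$, and sandwiches $f$-images of concentric balls between an inner ball of radius $\rho(f(x),f(z))/\eta(k^2)$ and an outer ball of radius $\eta(2k^3)\rho(f(x),f(z))$, using a $\sup/\inf$ separation argument to justify the inclusions, and finally reparametrises $r'=kr$. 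You instead pick a single pivot $b$ in the annulus $\widetilde{B}(x,r)\setminus\widetilde{B}(x,\tau r)$, apply $\eta$-quasisymmetry once to obtain the outer containment $f(\widetilde{B}(x,2r))\subset\overline{\widetilde{B}}\bigl(f(x),\eta(2/\tau)\rho(f(x),f(b))\bigr)$, and once more, via the contrapositive and bijectivity of $f$, to obtain the inner containment $\widetilde{B}\bigl(f(x),\rho(f(x),f(b))/\eta(1)\bigr)\subset f(\widetilde{B}(x,r))$. This avoids the $\sup/\inf$ machinery of the Distortion Lemma entirely, handles the degenerate case $\widetilde{B}(x,r)=X$ explicitly, and produces a doubling constant of the same qualitative form but with a milder explicit dependence on $\eta$ (the paper's involves $\eta(2k^3)\eta(k^2)$ for a larger $k$, while yours involves $\eta(2/\tau)\eta(1)$). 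The only cosmetic slips are that the annulus condition gives $\tau r\le\rho(x,b)<r$ rather than $\tau r<\rho(x,b)<r$, and one should note $\eta(1)\ge 1$ (standard for quasisymmetric maps in a space with nonempty annuli) so that the doubling exponent $1+\log_2(2\eta(2/\tau)\eta(1))$ is positive; neither affects the argument.
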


\begin{rem}\label{rem:maasalo}
We note that a result analogous to Lemma~\ref{lem:nu_dbl_2} is stated in~\cite[Proposition~4.7]{Maa06} in the setting where~$X$ is a $Q$-regular metric measure space with~$Q > 1$ that is doubling and rectifiably connected, and~$\mu$ is the Hausdorff $Q$-measure. The proof of Proposition~4.7 in~\cite{Maa06} relies on Proposition~4.6 in~\cite{Maa06}, which in turns relies on Proposition~4.5 in~\cite{Maa06}.
However, there is a point in the proof of Proposition~4.6 that seems unclear, as we explain below.

The technique that we use here to prove Lemma~\ref{lem:nu_dbl_2} is completely independent of that used in~\cite{Maa06}.

Let us recall Propositions~4.5 and~4.6 in~\cite{Maa06}, stated here as Propositions~\ref{prop:maa4.5} and~\ref{prop:maa4.6}, respectively.

\begin{prop}[\textbf{Proposition~4.5 in~\cite{Maa06}}]\label{prop:maa4.5}
  Let~$X$ and~$Y$ be metric spaces. If $f: X \rightarrow Y$ is $\eta$-quasisymmetric and if $A_1 \subset A_2 \subset X$ are such that $0 < \dia A_1 < \dia A_2 < \infty$, then $\dia f(A_2)$ is finite and
  \[
  \frac{1}{2\eta\Big( \frac{\dia A_2}{\dia A_1}\Big)}
  \leq \frac{\dia f(A_1)}{\dia f(A_2)}
  \leq \eta \bigg( \frac{2\dia A_1}{\dia A_2}\bigg).
  \]
\end{prop}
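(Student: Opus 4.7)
The plan is to exploit the $\eta$-quasisymmetry condition by introducing auxiliary points that nearly realise the relevant diameters, and then to pass to the limit using the continuity of $\eta$. Since diameters are suprema rather than maxima, the approximate points will introduce an error $\delta>0$, which we send to zero at the end.

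First I would verify that $\dia f(A_2)$ is finite. Pick once and for all two points $x_0,y_0\in A_1$ with $d(x_0,y_0)>0$, which is possible because $\dia A_1>0$. For any $p\in A_2$ we have $d(x_0,p)\le \dia A_2<\infty$, so the ratio $d(x_0,p)/d(x_0,y_0)$ is bounded by a finite constant $\theta_0:=\dia A_2/d(x_0,y_0)$. Applying $\eta$-quasisymmetry at the base point $x_0$ gives $d(f(x_0),f(p))\le\eta(\theta_0)\,d(f(x_0),f(y_0))=:M$ for every $p\in A_2$, and the triangle inequality then yields $\dia f(A_2)\le 2M<\infty$.

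For the \emph{upper} bound, fix $\delta>0$ and choose $p,q\in A_2$ with $d(p,q)>\dia A_2-\delta$. Given any $x,y\in A_1$, the triangle inequality $d(p,q)\le d(x,p)+d(x,q)$ forces one of $d(x,p),d(x,q)$, say $d(x,p)$, to be at least $(\dia A_2-\delta)/2$. Since $d(x,y)\le \dia A_1$, the ratio $d(x,y)/d(x,p)$ is at most $2\dia A_1/(\dia A_2-\delta)$, so by quasisymmetry
\[
d(f(x),f(y))\;\le\;\eta\!\left(\frac{2\dia A_1}{\dia A_2-\delta}\right) d(f(x),f(p))\;\le\;\eta\!\left(\frac{2\dia A_1}{\dia A_2-\delta}\right)\dia f(A_2).
\]
Taking the supremum over $x,y\in A_1$, sending $\delta\to 0$, and using continuity of $\eta$ gives the stated upper estimate.

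For the \emph{lower} bound, I will use a mirror-image argument, this time choosing points that nearly realise $\dia f(A_2)$ and $\dia A_1$. Fix $\delta\in(0,\dia A_1)$, pick $p,q\in A_2$ with $d(f(p),f(q))>\dia f(A_2)-\delta$, and pick $u,v\in A_1$ with $d(u,v)>\dia A_1-\delta$. By the triangle inequality for $f(p),f(q)$, we may relabel so that $d(f(u),f(p))\ge (\dia f(A_2)-\delta)/2$. Since $u\in A_1\subset A_2$ and $p\in A_2$, we have $d(u,p)\le\dia A_2$ and $d(u,v)\ge \dia A_1-\delta>0$, so by $\eta$-quasisymmetry applied at $u$ (with test points $p$ and $v$),
\[
\frac{\dia f(A_2)-\delta}{2}\;\le\; d(f(u),f(p))\;\le\;\eta\!\left(\frac{\dia A_2}{\dia A_1-\delta}\right)d(f(u),f(v))\;\le\;\eta\!\left(\frac{\dia A_2}{\dia A_1-\delta}\right)\dia f(A_1).
\]
Letting $\delta\to 0$ yields $\dia f(A_2)\le 2\eta(\dia A_2/\dia A_1)\,\dia f(A_1)$, which is the lower estimate after rearrangement.

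The only subtle point is the need to work with approximately diameter-realising points, since in a general metric space the diameter may not be attained. All the steps are elementary applications of the triangle inequality and the definition of $\eta$-quasisymmetry, together with the observation that for any two points $p,q$ and any base point $x$, at least one of $d(x,p),d(x,q)$ is at least $d(p,q)/2$; this is what produces the factor of $2$ inside and outside $\eta$ in the two bounds.
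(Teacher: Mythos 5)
The paper does not prove this proposition; it is quoted verbatim (as Proposition~4.5 in~\cite{Maa06}) purely as background for the discussion in Remark~\ref{rem:maasalo}, so there is no in-paper argument to compare against. Judged on its own, your proof is correct and self-contained. The finiteness of $\dia f(A_2)$ follows cleanly from comparing an arbitrary $p\in A_2$ against a fixed pair $x_0,y_0\in A_1$ with $d(x_0,y_0)>0$. The factor of $2$ in each inequality is produced, exactly as you say, by the observation that for any pair $p,q$ and any base point one of the two distances to the base point is at least $d(p,q)/2$: in the upper bound you apply this on the domain side to a near-diameter pair for $A_2$, and in the lower bound on the image side to a near-diameter pair for $f(A_2)$, then pass to the limit $\delta\to 0$ using the continuity of $\eta$. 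Two small points worth making explicit: in the upper-bound argument you should restrict to $\delta<\dia A_2$ (so that $\dia A_2-\delta>0$ and the ratio $d(x,y)/d(x,p)$ is well-defined), just as you restricted $\delta<\dia A_1$ for the lower bound; and the divisions by $\dia f(A_1)$ and $\dia f(A_2)$ require these to be strictly positive, which holds because an $\eta$-quasisymmetric map is in particular injective, so $\dia A_i>0$ forces $\dia f(A_i)>0$.
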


\begin{prop}[\textbf{Proposition~4.6 in~\cite{Maa06}}]\label{prop:maa4.6}
   Let~$X$ and~$Y$ be metric spaces. Let $f: X \rightarrow Y$ be $\eta$-quasisymmetric. Then for all~$x \in X$ and $r > 0$ there exist two constants $0 < r_x < R_x $ such that
   \[B(f(x),r_x) \subset f(B(x,r)) \subset B(f(x), R_x).\]
\end{prop}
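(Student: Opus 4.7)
The plan is to prove the two inclusions separately: the inner one $B(f(x),r_x) \subset f(B(x,r))$ follows purely from $f$ being a homeomorphism, while the outer one $f(B(x,r)) \subset B(f(x),R_x)$ uses a single application of the $\eta$-quasisymmetric condition against an auxiliary reference point.

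For the inner inclusion, observe that since $f$ is a homeomorphism it is an open map, so $f(B(x,r))$ is open in $Y$ and contains $f(x)$. Hence there exists $r_x > 0$ with $B(f(x),r_x) \subset f(B(x,r))$; this step uses nothing beyond continuity of $f^{-1}$.

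For the outer inclusion, if $X$ is a singleton the claim is trivial, so assume there is some $z \in X$ with $s := d_X(x,z) > 0$. For every $y \in B(x,r)$ the ratio $d_X(x,y)/d_X(x,z)$ is at most $r/s$, so by the $\eta$-quasisymmetric condition and the monotonicity of $\eta$,
\[
d_Y(f(x), f(y)) \leq \eta(r/s)\,d_Y(f(x), f(z)).
\]
Setting $R_x := \eta(r/s)\,d_Y(f(x), f(z)) + r_x + 1$ then gives both $f(B(x,r)) \subset B(f(x), R_x)$ and the required ordering $r_x < R_x$.

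The argument is essentially immediate and has no serious obstacle; the only care required is choosing the reference point $z$ (which exists as soon as $X$ is non-trivial) and taking $R_x$ slightly larger than the bound so that the strict inequality is preserved. What this argument does not provide is any uniformity of $r_x$ or $R_x$ in $x \in X$, since both constants depend explicitly on $d_X(x,z)$ and $d_Y(f(x), f(z))$; obtaining a quantitative version independent of such a reference point is a separate and more delicate question, which is presumably where the authors locate the unclear step in the original proof of \cite{Maa06}.
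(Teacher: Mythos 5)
Your proof is correct, but there is an important contextual point: the paper does \emph{not} prove Proposition~4.6. It appears inside Remark~\ref{rem:maasalo} only as a quotation from Maasalo's thesis, and the whole point of that remark is to explain why the authors do not trust the argument given in~\cite{Maa06} and therefore take a completely independent route (Lemma~\ref{lem:nu_dbling} and Lemma~\ref{lem:nu_dbl_2}) rather than rely on Propositions~4.5--4.7 of~\cite{Maa06}. Maasalo's argument tries to derive the inner inclusion by applying Proposition~\ref{prop:maa4.5} to $f^{-1}$ with the mismatched pair $A_1=B(f(x),r_x)\subset Y$, $A_2=B\subset X$; this is what the paper flags as unclear. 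Your argument bypasses that entirely: the inner inclusion $B(f(x),r_x)\subset f(B(x,r))$ is just the openness of $f$ (since a quasisymmetric map is, by Definition~\ref{def:quasisym}, a homeomorphism onto $Y$), and the outer inclusion follows from one application of the $\eta$-condition against a fixed auxiliary point $z\neq x$, with the straightforward caveat you note that $X$ must contain at least two points for such a $z$ to exist. This is a cleaner and certainly correct proof of the proposition \emph{as stated}. Your closing observation is precisely the relevant distinction: the proposition only asserts the qualitative existence of $r_x<R_x$, which is easy, whereas what Maasalo actually needs downstream (and what the present paper replaces by its Distortion Lemma) is quantitative control of $R_x/r_x$ that is uniform in $x$ and in the scale $r$; your construction, tied to the ad hoc reference point $z$, gives no such uniformity, and that uniformity is exactly where the substance lies. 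So your proof is correct and elementary, is genuinely different from both Maasalo's proposed argument and the paper's replacement strategy, and is compatible with the paper's skepticism --- it shows the proposition is true while leaving untouched the quantitative version that the paper ultimately needs and proves by other means.
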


In the proof of Proposition~\ref{prop:maa4.6}, the author fixes a ball $B := B(x,r) \in X$ with~$r >0$. Next, the author lets $r_x > 0$ be arbitrary for the moment, and lets the function $f: X \rightarrow Y$ be $\eta$-quasisymmetric, so that $f^{-1}: Y \rightarrow X$ is $\eta'$-quasisymmetric, where $\eta'(t) = 1/\eta^{-1}(t^{-1})$ for~$t>0$.
Then the author applies Proposition~\ref{prop:maa4.5} to the quasisymmetric map~$f^{-1}$ and the sets $A_1 = B(f(x),r_x) \subset Y$ and~$A_2 = B \subset X$.
Notice that in order to apply Proposition~\ref{prop:maa4.5}, the sets~$A_1$ and~$A_2$ must be in the same space. Namely, we must have~$A_1 \subset A_2 \subset Y$. However, the sets~$B(f(x),r_x)$ and~$B$ are in different spaces. 
We think there may be a typo here and that the set~$A_2$ should be~$f(B) \subset Y$ instead of~$B$. Even so, we still do not see how to guarantee that $B(f(x),r_x) \subset f(B)$.
\end{rem}


Below we introduce Proposition~\ref{prop:nu dbl} and Lemma~\ref{lem:nu_dbling}, which will be used to prove Lemma~\ref{lem:nu_dbl_2}.
\begin{prop}\label{prop:nu dbl}
Under the same conditions as in Lemma~\ref{lem:nu_dbl_2}
  choose~$k \geq 1/\tau$ and
  fix~$x \in X$, $a >0$, $b>0$.
  For all~$y_a \in B(x,ka)\backslash B(x,a)$ and~$y_b \in B(x,kb)\backslash B(x,b)$ we have
  \begin{equation}\label{eq:claim_nu dbl}
    \frac{\rho(f(x), f(y_b))}{\rho(f(x),f(y_a))} \leq \eta\bigg(\frac{kb}{a}\bigg).
  \end{equation}
\end{prop}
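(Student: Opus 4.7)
The proof should be a short, direct application of the definition of $\eta$-quasisymmetry. The plan is as follows.

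First I would extract the two radial bounds from the hypotheses on $y_a$ and $y_b$. Since $y_b \in B(x,kb) \setminus B(x,b)$ we have $\rho(x,y_b) < kb$, and since $y_a \in B(x,ka) \setminus B(x,a)$ we have $\rho(x,y_a) \geq a$. Combining these,
\[
\frac{\rho(x,y_b)}{\rho(x,y_a)} < \frac{kb}{a}.
\]

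Next I would apply the definition of $\eta$-quasisymmetry directly with $\theta := kb/a$ in the role of the ratio bound. Because $\eta$ is an increasing homeomorphism, the strict inequality above combined with the $\eta$-quasisymmetry of $f$ yields
\[
\frac{\rho(f(x),f(y_b))}{\rho(f(x),f(y_a))} \leq \eta\!\left(\frac{\rho(x,y_b)}{\rho(x,y_a)}\right) \leq \eta\!\left(\frac{kb}{a}\right),
\]
which is \eqref{eq:claim_nu dbl}.

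There is no real obstacle in the inequality chain itself; the only subtle point worth commenting on is the role of the hypothesis $k \geq 1/\tau$ together with the nonempty $\tau$-annuli property. These are not used in the derivation of \eqref{eq:claim_nu dbl} per se, but they guarantee that for each $x \in X$ and each $r > 0$ (with $X \setminus B(x,kr)$ nonempty) the sets $B(x,kr) \setminus B(x,r)$ are nonempty, so that the points $y_a$ and $y_b$ actually exist; indeed, $\tau k \geq 1$ gives $B(x,r) \subset B(x,\tau kr)$, hence
\[
B(x,kr) \setminus B(x,\tau kr) \subset B(x,kr) \setminus B(x,r),
\]
and the left-hand side is nonempty by Definition~\ref{Xspace}. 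Thus the proposition is nonvacuous, and the conclusion follows from the quasisymmetry inequality as above.
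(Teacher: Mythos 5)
Your proof is correct and takes essentially the same route as the paper. The paper simply asserts that the inequality ``is straightforward from the $\eta$-quasisymmetry of $f$'' and that the points $y_a,y_b$ exist because of the nonempty $\tau$-annuli property and the choice of $k$; you have supplied exactly the expected one-line inequality chain (using $\rho(x,y_a)\geq a$, $\rho(x,y_b)<kb$, then quasisymmetry and monotonicity of $\eta$) together with the nonemptiness argument, so there is nothing further to add.
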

Note that the conclusion of Proposition~\ref{prop:nu dbl} still holds under the weaker assumptions~$y_a \in X\backslash B(x,a)$ and~$y_b \in B(x,kb)$.
\begin{proof}
  Since~$\tau \in (0,1)$, $k \geq 1/\tau > 1$.
  The existence of points~$y_a$ and~$y_b$ is because~$X$ has nonempty $\tau$-annuli and because of the way~$k$ is chosen.
  Inequality~\eqref{eq:claim_nu dbl} is straightforward from the $\eta$-quasisymmetry of~$f$.
\end{proof}

 The technical Lemma~\ref{lem:nu_dbling} below gives some control over the distortion of sets under an $\eta$-quasisymmetric map.
\begin{lem}\label{lem:nu_dbling} \textbf{(Distortion Lemma)}
Let~$(X,\rho,\mu)$ be a space of homogeneous type.
Suppose $f: X \rightarrow X$ is an $\eta$-quasisymmetric map from~$X$ onto itself.
Choose~$\theta$ and~$k$ such that
\[0 \leq \eta(\theta) \leq \frac{1}{3}
\quad \text{ and } \quad
k \geq \frac{1}{\theta}.\]
For each ball~$B(x,r)$ in~$X$, let
\[s := \sup_{x' \in f(B(x,r))} \rho(f(x),x')
\quad \text{ and } \quad
t := \inf_{x' \in X \backslash f(B(x,kr))} \rho(f(x),x').\]
Then~$s < t$ and hence
\begin{equation}\label{eq0:nu_dbl}
B(f(x),s) \subset B(f(x), t).
\end{equation}
\end{lem}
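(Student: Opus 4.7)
The plan is a direct application of $\eta$-quasisymmetry, exploiting the fact that the ratio $\rho(x,a)/\rho(x,b)$ is forced to be small whenever $a$ lies in the inner ball $B(x,r)$ and $b$ lies outside the outer ball $B(x,kr)$. The chosen relationship $\eta(\theta)\le 1/3$ and $k\ge 1/\theta$ will then convert this into a definite gap between $s$ and $t$.

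First, since $f:X\to X$ is a homeomorphism and hence a bijection, $X\setminus f(B(x,kr)) = f(X\setminus B(x,kr))$, so I would rewrite
\begin{equation*}
s \;=\; \sup_{a\in B(x,r)} \rho(f(x),f(a)), \qquad
t \;=\; \inf_{b\in X\setminus B(x,kr)} \rho(f(x),f(b)).
\end{equation*}
Next, for any $a\in B(x,r)\setminus\{x\}$ and any $b\in X\setminus B(x,kr)$,
\begin{equation*}
\frac{\rho(x,a)}{\rho(x,b)} \;<\; \frac{r}{kr} \;=\; \frac{1}{k} \;\le\; \theta.
\end{equation*}
Invoking the $\eta$-quasisymmetry of $f$ together with the monotonicity of $\eta$, this gives
\begin{equation*}
\rho(f(x),f(a)) \;\le\; \eta(\theta)\,\rho(f(x),f(b)) \;\le\; \tfrac{1}{3}\,\rho(f(x),f(b));
\end{equation*}
the case $a=x$ holds trivially. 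Taking the supremum over $a$ and then the infimum over $b$ produces $s \le t/3$.

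Finally, to upgrade the weak inequality $s\le t/3$ to the strict inequality $s<t$, I need $t>0$. Because $f$ is a homeomorphism and $x\in B(x,kr)$, the image $f(B(x,kr))$ is a $\rho$-open neighbourhood of $f(x)$, so there exists $\delta>0$ with $B(f(x),\delta)\subset f(B(x,kr))$; equivalently, every point of $X\setminus f(B(x,kr))$ has $\rho$-distance at least $\delta$ from $f(x)$, so $t\ge\delta>0$. Combining with $s\le t/3$ yields $s<t$, and the asserted inclusion $B(f(x),s)\subset B(f(x),t)$ follows.

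The one subtle step I anticipate needing care is the positivity $t>0$, because in a general quasimetric space the quasiballs need not be open with respect to the quasimetric topology. A clean way around this is to pass through the comparable metric $d_\e$ provided by Theorem~\ref{prop:chain_apporoach}: $f$ is also a homeomorphism in the $d_\e$-topology (in which metric balls generate the topology), giving $d_\e$-positivity of the distance from $f(x)$ to the image of $X\setminus B(x,kr)$, and the equivalence $d_\e\sim \rho^\e$ then transfers this positivity back to $\rho$.
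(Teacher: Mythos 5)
Your argument is correct, but it organises the proof differently from the paper. Both proofs rest on the same quasisymmetry ratio estimate: for $a\in B(x,r)$ and $b\notin B(x,kr)$, one has $\rho(x,a)/\rho(x,b) < 1/k \le \theta$, hence $\rho(f(x),f(a))\le\eta(\theta)\,\rho(f(x),f(b))\le\tfrac13\rho(f(x),f(b))$. The paper, however, does not pass this directly to the supremum and infimum. It fixes sequences $\{u_n\}\subset f(B(x,r))$ and $\{v_n\}\subset X\setminus f(B(x,kr))$ with $\rho(f(x),u_n)\to s$ and $\rho(f(x),v_n)\to t$, sets $\varepsilon=s/2$, and extracts the strict inequality $\frac{s}{2t+s}<\frac{\rho(f(x),u_n)}{\rho(f(x),v_n)}\le\tfrac13$, from which $s<t$ follows algebraically. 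This device buries the strictness issue in the choice $\varepsilon=s/2$, which implicitly presupposes $s>0$. Your version instead passes the pointwise bound straight to the supremum over $a$ and the infimum over $b$, producing the clean quantitative inequality $s\le t/3$, and then supplies a separate topological argument for $t>0$, transferring openness through the comparable metric $d_{\varepsilon}$. The two proofs thus locate the needed positivity in different places ($s>0$ for the paper, $t>0$ for yours); your route yields the explicit gap $s\le t/3$ for free and handles the degenerate case $s=0$ without special pleading. Your closing observation that quasiballs need not be open in the quasimetric topology, so one should argue positivity via $d_{\varepsilon}$, is precisely the subtlety that the paper's proof skips over.
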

Notice that under the conditions of Lemma~\ref{lem:nu_dbling}, there is a concentric annulus centred at~$f(x)$ that separates~$f(B(x,r))$ and~$ X \backslash f(B(x,kr))$.
We also note that when we apply Lemma~\ref{lem:nu_dbling} in the proof of Lemma~\ref{lem:nu_dbl_2} below, we will also assume that the space~$(X,\rho,\mu)$ has nonempty $\tau$-annuli and~$k \geq 1/\tau$, where~$\tau \in (0,1)$. However, these two extra assumptions are not needed for the proof of Lemma~\ref{lem:nu_dbling}.

The proof of Lemma~\ref{lem:nu_dbling} is presented at the end of this section. Now we will use Lemma~\ref{lem:nu_dbling} to prove Lemma~\ref{lem:nu_dbl_2}. We first establish properties~\eqref{eq15:nu_dbl} and~\eqref{eq21:nu_dbl} below, then use them to show that $\mu_f$ is doubling, meaning there exists $C_{\mu_f} >1$ such that $\mu_f(B(x, 2r')) \leq C_{\mu_f}\mu_f(B(x,r'))$ for all~$x \in X$ and~$r' > 0$.

\medskip
\noindent \emph{Proof of Lemma~\ref{lem:nu_dbl_2}.}
Recall that our~$(X,\rho,\mu)$ has nonempty $\tau$-annuli for some~$\tau \in (0,1)$.
Choose~$\theta$ and~$k$ such that
\[0 \leq \eta(\theta) \leq \frac{1}{3}
\quad \text{ and } \quad
k \geq \max\bigg(\frac{1}{\theta}, \frac{1}{\tau}\bigg).\]

Fix a ball~$B(x,r)$ where~$x \in X$ and~$r >0$.
Let~$y \in B(x,r) \backslash B(x,r/k)$ and $z \in B(x,kr)\backslash B(x,r)$.
Applying Proposition~\ref{prop:nu dbl} with~$a = r/k$, $b = r$, $y_a = y$ and~$y_b=z$ we have
\[\frac{\rho(f(x),f(z))}{\rho(f(x),f(y))} \leq \eta \bigg(\frac{kr}{r/k} \bigg) = \eta(k^2).\]
As in Lemma~\ref{lem:nu_dbling}, define
\[s := \sup_{x' \in f(B(x,r))} \rho(f(x),x')
\quad \text{ and } \quad
t := \inf_{x' \in X \backslash f(B(x,kr))} \rho(f(x),x').\]
So
\[\frac{\rho(f(x),f(z))}{\eta(k^2)} \leq \rho(f(x),f(y))
\leq \sup_{x' \in f(B(x,r))} \rho(f(x),x') = s.\]
This together with the results in Lemma~\ref{lem:nu_dbling} and the way~$t$ is defined give us
\begin{equation}\label{eq15:nu_dbl}
B\bigg(f(x), \frac{\rho(f(x),f(z))}{\eta(k^2)}\bigg) \subset B(f(x),s) \subset B(f(x),t) \subset f(B(x,kr)).
\end{equation}

Next take $y_1 \in B(x,2k^3r) \backslash B(x,2k^2r)$. Again applying Proposition~\ref{prop:nu dbl}, this time with~$a = r$, $b = 2k^2r$, $y_a= z$ and~$y_b = y_1$ we have
\begin{equation}\label{eq17:nu_dbl}
    \frac{\rho(f(x),f(y_1))}{\rho(f(x),f(z))} \leq \eta \bigg(\frac{2k^3r}{r} \bigg) = \eta(2k^3).
\end{equation}

 Let~$r_1 := 2kr$. Set
 \[s_1 := \sup_{x' \in f(B(x,2kr))} \rho(f(x),x') = \sup_{x' \in f(B(x,r_1))} \rho(f(x),x'), \quad \text{ and}\]
 \[t_1 := \inf_{x' \in X \backslash f(B(x,2k^2r))} \rho(f(x),x') = \inf_{x' \in X \backslash f(B(x,kr_1))} \rho(f(x),x').\]
 Notice that with our choices of~$\theta$, $k$, $r_1$, $s_1$ and~$t_1$, we may apply Lemma~\ref{lem:nu_dbling} to conclude that~$s_1 < t_1$.
Moreover, since~$f$ is a homeomorphism, $ y_1 \notin B(x,2k^2r)$ implies that $ f(y_1) \notin f(B(x,2k^2r))$. Thus
\begin{equation}\label{eq19:nu_dbl}
    t_1 \leq \rho(f(x),f(y_1)).
\end{equation}
By the definition of~$s_1$, Lemma~\ref{lem:nu_dbling} and inequalities~\eqref{eq19:nu_dbl} and~\eqref{eq17:nu_dbl} we have
\begin{equation}\label{eq21:nu_dbl}
f(B(x,2kr)) \subset B(f(x),s_1) \subset B\big(f(x),\eta(2k^3)\rho(f(x),f(z))\big).
\end{equation}

Finally, using~\eqref{eq21:nu_dbl}, the doubling property of the measure~$\mu$, and~\eqref{eq15:nu_dbl}, we show that the measure~$\mu_f$ is doubling, with doubling constant~$C_{\mu_f}$ depending on~$A_1, \tau$ and~$\eta$. Consider 
\begin{eqnarray}\label{eq22:nu_dbl}
 \mu\big(f(B(x,2kr))\big) &\leq& \mu\big(B\big(f(x),\eta(2k^3)\rho(f(x),f(z))\big)\big) \noz \\
   &\leq& A_1^{1 + \log_2(\eta(2k^3)\eta(k^2))}
   \mu\bigg(B\Big(f(x),\frac{\rho(f(x),f(z))}{\eta(k^2)}\Big)\bigg) \noz \\
   &\leq& A_1^{1 + \log_2(\eta(2k^3)\eta(k^2))}\mu\big(f(B(x,kr))\big).
\end{eqnarray}
Now for each~$r' > 0$, let~$r := r'/k$. Then~$kr = r'$.
By~\eqref{eq22:nu_dbl} we have
\[\mu_f(B(x,2r')) \leq A_1^{1 + \log_2(\eta(2k^3)\eta(k^2))} \mu_f(B(x,r')).\]
Hence $\mu_f$ is doubling with the doubling constant $C_{\mu_f} := A_1^{1 + \log_2(\eta(2k^3)\eta(k^2))}, $
where~$A_1$ is the doubling constant of~$\mu$.  \qedhere \hfill \(\Box\)


 To complete this section, we give the proof of Lemma~\ref{lem:nu_dbling}.

\medskip
\noindent \emph{Proof of Lemma~\ref{lem:nu_dbling}}.
Let~$\theta$ and~$k$ be chosen as in Lemma~\ref{lem:nu_dbling}.
Fix a ball~$B(x,r)$ in~$X$.
By the definition of~$s$ and~$t$, there exist sequences~$\{u_n\} \subset f(B(x,r))$ and $\{v_n\} \subset X \backslash f(B(x,kr))$ such that
\[\lim_{n \rightarrow \infty} \rho(f(x),u_n) = s \quad \text{ and } \quad
\lim_{n \rightarrow \infty} \rho(f(x),v_n) = t.\]
Fix~$\e > 0$.
Then there exists~$M \in \N$ such that for all~$n \geq M$, we have
\begin{equation}\label{eq5:nu_dbl}
    s - \e < \rho(f(x),u_n) \leq s  \quad \text{ and } \quad   t  \leq \rho(f(x),v_n) < t + \e.
\end{equation}
Taking $\e = s/2$, inequalities~\eqref{eq5:nu_dbl} gives
\begin{equation*}
\frac{s}{2} < \rho(f(x),u_n) \leq s \quad \text{ and } \quad
t \leq \rho(f(x), v_n) < \frac{2t+s}{2}.
\end{equation*}
This yields immediately
\begin{equation}\label{eq7:nu_dbl}
    \frac{s}{2t+s} < \frac{\rho(f(x),u_n)}{\rho(f(x), v_n)} \leq \frac{s}{t}.
\end{equation}
Moreover, since~$u_n \in f(B(x,r))$, $v_n \in X\backslash f(B(x,kr))$ and~$f$ is a homeomorphism, we have
$\rho(x,f^{-1}(u_n)) < r$ and $\rho(x,f^{-1}(v_n)) \geq kr$.
From this together with the fact that~$1/k \leq \theta$, we obtain
\begin{equation}\label{eq1.2:nu_dbl}
    \frac{\rho(x,f^{-1}(u_n))}{\rho(x,f^{-1}(v_n))} < \frac{r}{kr} = \frac{1}{k} \leq \theta.
\end{equation}
Now using inequality~\eqref{eq7:nu_dbl}, the quasisymmetry of~$f$ and the fact that~$\eta(\theta) \leq 1/3$  we have
\[\frac{s}{2t+s} <
\frac{\rho(f(x),u_n)}{\rho(f(x),v_n)} \leq \eta(\theta) \leq \frac{1}{3},\]
which is equivalent to~$s < t$.
Again,~\eqref{eq0:nu_dbl} follows immediately. \hfill\(\Box\)

\subsection{The measure induced by a quasisymmetric map is Borel regular}\label{subsec:m_f Borel regular}
Recall that we are assuming the measure~$\mu$ is defined on a $\sigma$-algebra $\mathcal{M}$ which contains all Borel sets and all quasiballs in~$X$.
Let~$\mathcal{B}_X$ be the Borel $\sigma$-algebra generated by the collection~$\Oo$ of open sets in~$X$.
Hence, $\mathcal{B}_X \subset \mathcal{M}$, so~$\mu$ is defined on~$\mathcal{B_X}$, and so~$\mu$ is a Borel measure.

We recall the definition of the pullback measure~$\mu_f(E) := \mu(f(E))$ for all $\mu$-measurable set~$E \in X$ and an $\eta$-quasisymmetric map~$f$ from~$X$ onto itself. Since~$f$ is a homeomorphism, it follows immediately that~$\mu_f$ is a measure.

In this section, we start by showing that the measure~$\mu_f$ is also a Borel measure, if~$\mu$ is a Borel measure (Lemma~\ref{lem:mu_f regular}).
Then we prove that~$\mu_f$ is also Borel regular, if~$\mu$ is Borel regular (Lemma~\ref{lem:m_f Borel regular}).
This result is used in Section~\ref{sec:R1_mec} to show the existence of~$\widehat{J}_f$.

\begin{lem}\label{lem:mu_f regular}
Suppose~$(X,\rho,\mu)$ is a space of homogeneous type with $\mu$ being a Borel measure.
Let $f$ be an $\eta$-quasisymmetric map from $(X,\rho,\mu)$ onto itself.
Then the measure~$\mu_f$ is a Borel measure.
\end{lem}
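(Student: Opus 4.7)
The plan is to reduce the statement to two observations about homeomorphisms and then verify the measure axioms directly. Since $f$ is $\eta$-quasisymmetric, $f$ is in particular a homeomorphism of $X$ onto itself, so both $f$ and $f^{-1}$ are continuous.

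First I would show that $f$ sends Borel sets to Borel sets, so that the composition $E \mapsto \mu(f(E))$ is well defined on $\mathcal{B}_X$. Consider the family
\[
    \mathcal{S} := \{E \subset X : f(E) \in \mathcal{B}_X\}.
\]
Because $f^{-1}$ is continuous, for every open $U \subset X$ we have $f(U) = (f^{-1})^{-1}(U)$ open, hence Borel; so $\Oo \subset \mathcal{S}$. Since $f$ is a bijection, the set operations behave well under $f$: $f(X \setminus E) = X \setminus f(E)$ and $f(\bigcup_n E_n) = \bigcup_n f(E_n)$. Combined with the fact that $\mathcal{B}_X$ is a $\sigma$-algebra, this shows $\mathcal{S}$ is itself a $\sigma$-algebra. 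Since $\mathcal{S} \supset \Oo$ and $\mathcal{B}_X$ is the $\sigma$-algebra generated by $\Oo$, we obtain $\mathcal{B}_X \subset \mathcal{S}$, i.e.\ $f(E) \in \mathcal{B}_X$ whenever $E \in \mathcal{B}_X$. Because $\mu$ is assumed to be a Borel measure, $\mu(f(E))$ is defined for every $E \in \mathcal{B}_X$, so $\mu_f$ is defined on $\mathcal{B}_X$.

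Next I would check the measure axioms. Using that $f$ is a bijection: $f(\emptyset) = \emptyset$, so $\mu_f(\emptyset) = \mu(\emptyset) = 0$; and if $\{E_n\}_{n \in \N} \subset \mathcal{B}_X$ is a pairwise disjoint family, then $\{f(E_n)\}$ is also pairwise disjoint (by injectivity of $f$), and $f(\bigcup_n E_n) = \bigcup_n f(E_n)$, whence
\[
    \mu_f\Big(\bigcup_n E_n\Big) = \mu\Big(\bigcup_n f(E_n)\Big) = \sum_n \mu(f(E_n)) = \sum_n \mu_f(E_n)
\]
by countable additivity of $\mu$ on $\mathcal{B}_X$. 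Therefore $\mu_f$ is a measure defined on the Borel $\sigma$-algebra $\mathcal{B}_X$, i.e.\ a Borel measure.

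There is no real obstacle here: the entire argument rests on the bijectivity and bi-continuity of $f$, properties granted by the definition of $\eta$-quasisymmetry. The only subtlety is to avoid invoking any hypothesis beyond $f$ being a homeomorphism and $\mu$ being Borel; in particular, no doubling, regularity, or $\eta$-quantitative control is used in this lemma. These extra properties will be needed only for the subsequent Borel-regularity result (Lemma~\ref{lem:m_f Borel regular}), not here.
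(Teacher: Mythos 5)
Your proof is correct and takes essentially the same approach as the paper: both reduce matters to showing that a homeomorphism preserves the Borel $\sigma$-algebra via a ``good sets'' / generated-$\sigma$-algebra argument. The paper packages this as the equality $f(\mathcal{B}_X) = \mathcal{B}_X$ (Claim~4.5), proving $\mathcal{B}_X \subset f(\mathcal{B}_X)$ and then applying the same reasoning to $f^{-1}$, whereas you go directly for the one-sided inclusion $f(\mathcal{B}_X) \subset \mathcal{B}_X$ via $\mathcal{S}$ and then spell out the measure axioms explicitly; the content is identical.
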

\begin{proof}
%

To show that~$\mu_f$ is a Borel measure, we have to show that~$\mu_f$ is defined on every Borel set~$E \subset X$.
Since $\mu_f(E) = \mu(f(E))$ and $\mu$ is a Borel measure, it is sufficient to show that for each  Borel set~$E$, $f(E)$ is also a Borel set. In other words, the homeomorphism~$f$ preserves the collection of Borel sets. To prove this, we first establish the following claim.
\begin{claim}\label{claim:f(B) is B}
Suppose~$(X,\rho,\mu)$ is a space of homogeneous type.
Let~$\Oo$ be the collection of open sets in~$X$.
Let~$\mathcal{B}_X$ be the $\sigma$-algebra generated by~$\Oo$. That is,~$\mathcal{B}_X$ is the Borel $\sigma$-algebra on~$X$.  Let~$f: X \rightarrow X$ be a homeomorphism of~$X$ onto itself.
Define $f(\mathcal{B}_X) := \{f(E): E \in \mathcal{B}_X\}.$
Then~$f(\mathcal{B}_X) = \mathcal{B}_X$.
\end{claim}
\noindent \emph{Proof of Claim~\ref{claim:f(B) is B}.}
Since~$\mathcal{B}_X$ is a $\sigma$-algebra and~$f$ is a homeomorphism of~$X$ onto itself, it is straightforward to show that~$f(\mathcal{B}_X)$ is closed under countable unions and complements, and so~$f(\mathcal{B}_X)$ is a $\sigma$-algebra.
Moreover, it follows immediately from the continuity of~$f$ that~$\Oo \subset f(\mathcal{B}_X)$.
Since by definition $\mathcal{B}_X$ is the smallest $\sigma$-algebra that contains~$\Oo$, we have~$\mathcal{B}_X \subset f(\mathcal{B}_X)$.
Applying the same argument to~$f^{-1}$, we obtain~$\mathcal{B}_X \subset f^{-1}(\mathcal{B}_X)$, and so~$f(\mathcal{B}_X) \subset f \circ f^{-1}(\mathcal{B}_X) = \mathcal{B}_X$.
 \hfill \(\Box\)

From Claim~\ref{claim:f(B) is B}, we can see that~$f(E) \in \mathcal{B}_X$ for all Borel sets~$E \in \mathcal{B}_X$, which completes the proof of Lemma~\ref{lem:mu_f regular}.
\end{proof}
If we assume further that~$\mu$ is Borel regular, then~$\mu_f$ is also Borel regular.
\begin{lem}\label{lem:m_f Borel regular}
Suppose~$(X,\rho,\mu)$ is a space of homogeneous type with $\mu$ a Borel regular measure.
Let $f$ be an $\eta$-quasisymmetric map from $(X,\rho,\mu)$ onto itself.
Then the measure~$\mu_f$ is Borel regular.
\end{lem}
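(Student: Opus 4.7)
The plan is to verify the two defining conditions of Borel regularity for $\mu_f$ directly from the definition stated in the paper: for every Borel set $E \subset X$,
$$\mu_f(E) = \sup\{\mu_f(V) : V \subset E,\ V \text{ closed}\} = \inf\{\mu_f(U) : E \subset U,\ U \text{ open}\}.$$
The idea is to transport the corresponding identities for $\mu$ over to $\mu_f$ via the homeomorphism $f$. The preceding Lemma~\ref{lem:mu_f regular} already puts $\mu_f$ in position as a Borel measure, so what remains is the quantitative approximation by closed subsets and open supersets.

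First I would fix a Borel set $E \subset X$. By Claim~\ref{claim:f(B) is B} (used in the proof of Lemma~\ref{lem:mu_f regular}), the image $f(E)$ is again Borel, and by definition $\mu_f(E) = \mu(f(E))$. Since $\mu$ is assumed to be Borel regular, we have
$$\mu(f(E)) = \sup\{\mu(W) : W \subset f(E),\ W \text{ closed in } X\} = \inf\{\mu(W') : f(E) \subset W',\ W' \text{ open in } X\}.$$
The main step is to re-index these sup and inf so that the approximating sets live inside (or around) $E$ rather than $f(E)$.

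For the re-indexing, the key observation is that because $f : X \to X$ is a homeomorphism of $X$ onto itself, the maps $V \mapsto f(V)$ and $U \mapsto f(U)$ are containment-preserving bijections between closed (respectively open) subsets of $X$ contained in $E$ and closed (respectively open) subsets of $X$ contained in $f(E)$. Concretely, $W \subset f(E)$ is closed in $X$ if and only if $W = f(V)$ for a unique closed $V \subset E$, in which case $\mu(W) = \mu(f(V)) = \mu_f(V)$. Substituting this change of variable yields
$$\mu_f(E) = \mu(f(E)) = \sup\{\mu_f(V) : V \subset E,\ V \text{ closed}\}.$$
The analogous argument, using that both $f$ and $f^{-1}$ map open sets to open sets, gives $\mu_f(E) = \inf\{\mu_f(U) : E \subset U,\ U \text{ open}\}$. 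Together these establish the Borel regularity of $\mu_f$.

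There is no serious obstacle here; the entire proof is a transport-of-structure argument relying on (i) Claim~\ref{claim:f(B) is B} to know that $f$ preserves the Borel $\sigma$-algebra, (ii) the bicontinuity of $f$ to know that $f$ sets up bijections between the classes of closed and open subsets of $X$ contained in (respectively containing) $E$ and $f(E)$, and (iii) the identity $\mu_f(\,\cdot\,) = \mu(f(\,\cdot\,))$ to translate the sup/inf over $f(E)$ into a sup/inf over $E$. The one point worth writing out carefully is the bijective correspondence between the indexing families, since the paper's definition uses closed and open subsets of $X$ (not merely of $E$), and this is precisely where the homeomorphism property of $f$ is essential.
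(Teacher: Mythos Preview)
Your proposal is correct and follows essentially the same approach as the paper: both use the Borel regularity of~$\mu$ applied to~$f(E)$ and then re-index the supremum over closed~$W\subset f(E)$ (and infimum over open~$W'\supset f(E)$) via the bijection~$V\mapsto f(V)$ induced by the homeomorphism~$f$. The paper's proof is slightly terser but the logical content is identical.
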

\begin{proof}
Since~$f$ and~$f^{-1}$ are homeomorphisms of~$X$ onto itself, the collection of closed subsets of~$X$ is preserved by~$f$ and~$f^{-1}$, and so for all~$E \in \mathcal{B}_X$ we have
\begin{eqnarray}\label{eq1a:mu_f_regular}
  \mu_f(E) &=& \mu(f(E)) \noz\\
   &=&  \sup \{\mu(F): F \text{ closed}, F \subset f(E)\} \noz\\
   &=& \sup \{\mu(f(V)): f(V) \text{ closed}, f(V) \subset f(E)\} \noz\\
   &=& \sup \{\mu_f(V): V \text{ closed}, V \subset E\}.
\end{eqnarray}

Second, we must show that for all~$E \in \mathcal{B}_X$,
\begin{equation*}
\mu_f(E) = \inf \{\mu_f(U): U \text{ open}, E \subset U\}.
\end{equation*}
This follows by an analogous argument to that for~\eqref{eq1a:mu_f_regular}, applied to the open sets~$U$ containing~$E$.

Thus~$\mu_f$ is Borel regular, as required.
\end{proof}


\subsection{Vitali Covering Theorem on~$(X,\rho,\mu)$}\label{subsec:Vitali covering thm}
We develop the basic covering theorem and the Vitali Covering Theorem on spaces of homogeneous type~$(X,\rho,\mu)$. They are presented in Theorems~\ref{thm:basic cover} and~\ref{thm:vitali in X} below.
Theorem~\ref{thm:basic cover} is used to prove Theorem~\ref{thm:vitali in X}.
Theorem~\ref{thm:vitali in X} will be used in the next section to establish the Radon--Nikodym Theorem on~$(X,\rho,\mu)$.
The proof of these covering theorems follows similar ideas to those in the proofs of Theorems~1.6 and~1.2 in~\cite{Hei01}, which are special cases of these covering theorems in the setting of metric measure spaces. We start with the basic covering theorem.

\begin{thm}\label{thm:basic cover}\textbf{(Basic covering theorem on~$(X,\rho,\mu)$)}
Let~$(X,\rho,\mu)$ be a space of homogeneous type. 
Let~$F$ be a family of balls in~$X$ of uniformly bounded radius.
Then there exists a subfamily~$G$ of~$F$ such that
\begin{equation}\label{eq1:basic_cover}
\bigcup_{B \in F} B \subset \bigcup_{B \in G} CB, \text{ where } C = A_0 + 4A_0^2.
\end{equation}
In fact, every ball~$B_F$ from~$F$ meets a ball~$B_G$ from~$G$ with radius at least half that of~$B_F$. Specifically, for each ball~$B_F \in F$, there exists a ball~$B_G \in G$ such that
\[B_F \cap B_G \neq \emptyset \quad \text{ and }\quad r(B_G) \geq \frac{1}{2}r(B_F).\]
\end{thm}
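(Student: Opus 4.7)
The plan is to adapt the classical greedy construction of a maximal disjoint subfamily, as in Heinonen's Theorem~1.2, to the quasimetric setting. The only genuinely nontrivial modification is bookkeeping with the quasitriangle constant~$A_0$; the combinatorics are the same as in the metric case.

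First I would set $R := \sup_{B \in F} r(B) < \infty$ and partition~$F$ into generations by radius: for each integer $j \geq 0$, let
\[
F_j := \bigl\{\, B \in F : 2^{-j-1} R < r(B) \leq 2^{-j} R \,\bigr\},
\]
so that $F = \bigcup_{j \geq 0} F_j$. I would then build the subfamily~$G$ inductively. Let $G_0$ be a maximal pairwise disjoint subfamily of~$F_0$ (its existence is guaranteed by Zorn's lemma, since the union of any chain of pairwise disjoint subfamilies of~$F_0$ is again pairwise disjoint). Having defined $G_0, \ldots, G_{j-1}$, let
\[
\widetilde{F}_j := \bigl\{\, B \in F_j : B \cap B' = \emptyset \text{ for every } B' \in G_0 \cup \cdots \cup G_{j-1} \,\bigr\},
\]
and let $G_j$ be a maximal pairwise disjoint subfamily of~$\widetilde{F}_j$. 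Finally, set $G := \bigcup_{j \geq 0} G_j$. By construction $G$ is pairwise disjoint.

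Next I would verify the ``meeting'' property. Fix $B \in F$, say $B \in F_j$. If $B \in G_j$ it meets itself. Otherwise, by the maximality of~$G_j$, $B$ cannot lie in~$\widetilde{F}_j$ (else $G_j \cup \{B\}$ would be strictly larger and still pairwise disjoint within~$\widetilde{F}_j$), so $B$ intersects some ball $B' \in G_0 \cup \cdots \cup G_j$. Since $B' \in G_i$ for some $i \leq j$, we have $r(B') > 2^{-i-1} R \geq 2^{-j-1} R \geq r(B)/2$, which gives exactly the claim $r(B_G) \geq r(B_F)/2$.

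The main (and essentially only) obstacle is pinning down the correct dilation constant~$C$ from the quasitriangle inequality, so I would do the computation carefully. Suppose $B = \widetilde{B}(x,r) \in F$ meets $B' = \widetilde{B}(x', r') \in G$ with $r \leq 2 r'$, and pick $z \in B \cap B'$. For any $y \in B$, two applications of the quasitriangle inequality yield
\[
\rho(y, z) \leq A_0 \rho(y, x) + A_0 \rho(x, z) < 2 A_0 r,
\qquad
\rho(y, x') \leq A_0 \rho(y, z) + A_0 \rho(z, x') < 2 A_0^2 r + A_0 r'.
\]
Using $r \leq 2 r'$, this gives $\rho(y, x') < (4 A_0^2 + A_0) r'$, so $y \in C B'$ with $C = A_0 + 4 A_0^2$. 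Hence $B \subset C B'$, establishing the containment~\eqref{eq1:basic_cover} and completing the proof.
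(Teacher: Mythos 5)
Your proposal takes the same route as the paper, which simply defers to Heinonen's Theorem~1.2 and notes that the quasitriangle inequality replaces the triangle inequality, producing $C = A_0 + 4A_0^2$ (correctly specialising to Heinonen's $C=5$ when $A_0=1$). Your dilation computation is correct.

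One small logical slip in the verification of the meeting property: you assert that maximality of $G_j$ forces $B \notin \widetilde{F}_j$, but that is not true --- a ball $B \in F_j \setminus G_j$ can perfectly well lie in $\widetilde{F}_j$ while intersecting some ball of $G_j$, which is consistent with $G_j$ being a maximal pairwise disjoint subfamily of $\widetilde{F}_j$. The conclusion you draw is still right, via the correct case split: either $B \notin \widetilde{F}_j$, so $B$ meets some ball in $G_0 \cup \cdots \cup G_{j-1}$, or $B \in \widetilde{F}_j \setminus G_j$, in which case maximality of $G_j$ inside $\widetilde{F}_j$ forces $B$ to meet some ball of $G_j$. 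Either way $B$ meets a ball of $G_0 \cup \cdots \cup G_j$, which is what you use.
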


\begin{proof}
  Theorem~\ref{thm:basic cover} is a generalisation from metric measure spaces~$(X,d,\mu)$ to spaces of homogeneous type~$(X,\rho,\mu)$ of Theorem~1.2 in~\cite{Hei01}. The proof given there goes through almost unchanged. The only difference is the use of the quasitriangle inequality instead of the triangle inequality to obtain~\eqref{eq1:basic_cover}. As the reader may notice, the constant~$C = A_0 + 4A_0^2$ in~\eqref{eq1:basic_cover} is the substitute for~$C=5$ in~\cite{Hei01}.
  As usual~$A_0$ is the constant appearing in the quasitriangle inequality for~$(X,\rho,\mu)$.
\end{proof}

Next, the basic covering theorem allows us to prove the Vitali Covering Theorem.
\begin{thm}\label{thm:vitali in X}\textbf{(Vitali Covering Theorem on~$(X,\rho,\mu)$)}
Let~$(X,\rho,\mu)$ be a space of homogeneous type. 
Let~$A$ be a bounded subset of~$X$.
Let~$F$ be a collection of closed balls centred at~$A$ such that the balls have uniformly bounded radii and
\[\inf_{B(a,r) \in F} r = 0 \quad \text{ for each } a \in A.\]
Then there is a countable disjoint subfamily~$G$ of~$F$ such that the balls in~$G$ cover $\mu$-almost all of~$A$, namely
\[\mu(A \backslash \bigcup_{G} B) = 0.\]
\end{thm}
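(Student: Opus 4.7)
The plan is to follow the standard Vitali covering argument as in Theorem~1.6 of~\cite{Hei01}, adapted to the quasimetric setting by careful use of the quasitriangle inequality together with the basic covering theorem (Theorem~\ref{thm:basic cover}) already at hand.

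First, I would apply Theorem~\ref{thm:basic cover} to the family~$F$ to extract a disjoint subfamily~$G$ of~$F$ such that every ball in~$F$ meets some ball in~$G$ of at least half the radius, and $\bigcup_{\widetilde{B} \in F}\widetilde{B} \subset \bigcup_{\widetilde{B} \in G} C\widetilde{B}$ with $C = A_0 + 4A_0^2$. Since $A$ is bounded and the radii in~$F$ are uniformly bounded by some $R$, a quasitriangle estimate shows that all balls in $G$ are contained in one fixed enlarged quasiball $\widetilde{B}_0$ centred at a chosen point of~$A$; by the doubling property $\mu(\widetilde{B}_0) < \infty$, and since the balls of $G$ are pairwise disjoint and $\mu$-measurable,
\[
\sum_{\widetilde{B} \in G} \mu(\widetilde{B}) \leq \mu(\widetilde{B}_0) < \infty.
\]
Because balls have positive $\mu$-measure, this forces $G$ to be countable; enumerate $G = \{\widetilde{B}_i\}_{i=1}^{\infty}$ with $\widetilde{B}_i = \widetilde{B}(a_i, r_i)$.

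Next, to show $\mu(A \setminus \bigcup_i \widetilde{B}_i) = 0$, fix $N$ and set $E_N := A \setminus \bigcup_{i=1}^{N}\widetilde{B}_i$. For each $x \in E_N$, I would use the hypothesis $\inf r = 0$ to produce a ball $\widetilde{B}(x,s) \in F$ with $s$ small enough that $\widetilde{B}(x,s)$ meets none of $\widetilde{B}_1,\ldots,\widetilde{B}_N$. By Theorem~\ref{thm:basic cover}, this ball must meet some $\widetilde{B}_j \in G$ with $j > N$ and $r_j \geq s/2$. Picking $y \in \widetilde{B}(x,s) \cap \widetilde{B}_j$ and applying the quasitriangle inequality,
\[
\rho(x,a_j) \leq A_0\rho(x,y) + A_0\rho(y,a_j) \leq A_0 s + A_0 r_j \leq 3 A_0 r_j,
\]
so $x \in 3A_0\widetilde{B}_j$. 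Consequently $E_N \subset \bigcup_{j > N} 3A_0\widetilde{B}_j$, and iterating the doubling inequality~\eqref{eq:dbl_measure_2} yields $\mu(3A_0\widetilde{B}_j) \leq K \mu(\widetilde{B}_j)$ for a constant $K$ depending only on $A_0$ and $A_1$. Summing over $j > N$ and letting $N \to \infty$,
\[
\mu(E_N) \leq K \sum_{j > N} \mu(\widetilde{B}_j) \longrightarrow 0,
\]
since the series $\sum_j \mu(\widetilde{B}_j)$ converges.

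The main obstacle I anticipate lies in the step of producing a ball $\widetilde{B}(x,s) \in F$ disjoint from $\bigcup_{i=1}^{N}\widetilde{B}_i$. In a genuine metric space this is immediate: a finite union of closed balls is topologically closed and $x$ lies in its open complement. In a quasimetric space, however, the quasitriangle inequality only gives $\rho(x,y) \geq \rho(x,a_i)/A_0 - r_i$ for $y \in \widetilde{B}_i$, which may be nonpositive exactly when $r_i < \rho(x,a_i) \leq A_0 r_i$, so $x$ may fail to have positive quasidistance from the union. This can be circumvented either by passing through the metric $d_{\e}$ comparable to $\rho^{\e}$ provided by Theorem~\ref{prop:chain_apporoach} and running the classical metric argument there (the conclusion transfers back since quasiballs in~$\rho$ and balls in~$d_{\e}$ are comparable up to snowflaking, while measurability and doubling constants are preserved up to uniform factors), or by applying the argument to slightly shrunken closed quasiballs and absorbing the loss into a larger absolute enlargement constant in place of $3A_0$.
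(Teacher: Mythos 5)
Your overall approach is exactly what the paper does: apply the quasimetric basic covering theorem (Theorem~\ref{thm:basic cover}) to extract a countable disjoint subfamily~$G$, show that $\sum_{\widetilde{B}\in G}\mu(\widetilde{B})<\infty$ using the uniformly bounded radii and the boundedness of~$A$, and then for $a\in A\setminus\bigcup_{1}^{N}\widetilde{B}_i$ find a small ball of $F$ meeting some $\widetilde{B}_j$ with $j>N$ and $r_j\geq s/2$, showing $a\in C\widetilde{B}_j$, so that $E_N\subset\bigcup_{j>N}C\widetilde{B}_j$ with measure tail $\to 0$. The paper's own proof is a one-paragraph citation to this same argument with $C=A_0+4A_0^2$ replacing~$5$.

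The subtlety you isolate at the end is, however, a genuine one, and the paper's citation-level proof does not acknowledge it. In a quasimetric space, closed quasiballs $\overline{B}_\rho(a_i,r_i)=\{y:\rho(a_i,y)\leq r_i\}$ need not be closed in the topology, because $\rho$ need not be continuous. The computation you give is exactly the obstruction: if $y\in\overline{B}_\rho(a,s)\cap\overline{B}_\rho(a_i,r_i)$ then $\rho(a,a_i)\leq A_0 s+A_0 r_i$, which is consistent with $\rho(a,a_i)>r_i$ whenever $r_i<\rho(a,a_i)\leq A_0 r_i$, so no choice of small $s$ is forced to work. Thus the step ``since $a$ lies outside the closed sets $\overline{B}_1,\ldots,\overline{B}_N$, there is a small ball of~$F$ around~$a$ disjoint from them'' fails in general. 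Your instinct that the theorem really needs some regularity of~$\rho$ here (continuity of $\rho$, equivalently that closed quasiballs be closed in the induced topology) is correct.

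Where I would push back is on the proposed repairs, which as sketched do not close the gap. The transfer to $(X,d_\e,\mu)$ runs into a coverage--disjointness trade-off: the family $F$ consists of closed $\rho$-quasiballs, and by Proposition~\ref{prop:compare_ball_quasiball} each is sandwiched, $\overline{B}_{d_\e}(a,C_\e^{-1}r^\e)\subset\overline{B}_\rho(a,r)\subset\overline{B}_{d_\e}(a,C_\e r^\e)$. If you run Heinonen's Theorem~1.6 on the inscribed $d_\e$-balls you get a disjoint subfamily of inscribed balls covering $\mu$-a.e.\ of $A$, hence the corresponding larger $\rho$-quasiballs also cover, but those $\rho$-quasiballs need not be pairwise disjoint; if instead you run it on the circumscribed $d_\e$-balls you get disjoint $\rho$-quasiballs but lose coverage. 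The ``slightly shrunken quasiballs'' idea has the same problem, since the annulus $A_0\overline{B}_i\setminus\overline{B}_i$ of points that cannot be separated from $\overline{B}_i$ has fixed positive measure and does not disappear under a small shrink. A clean way to close the gap is to state (or silently assume, as much of the literature does) that $\rho$ is a regularized quasimetric in the Mac\'ias--Segovia sense, for instance $\rho=d_\e^{1/\e}$ as produced by Theorem~\ref{prop:chain_apporoach}: then closed $\rho$-quasiballs are closed $d_\e$-balls and the Heinonen argument goes through verbatim. For the theorem as literally stated, with a completely arbitrary quasimetric, either this regularity should be added as a hypothesis or the argument needs a different selection of $G$ that builds in the extra $A_0$-separation required by the quasitriangle inequality.
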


\begin{proof}
Theorem~\ref{thm:vitali in X} is a generalisation to spaces of homogeneous type~$(X,\rho,\mu)$ of its analogue in the setting of metric measure spaces~$(X,d,\mu)$, given in~\cite[Theorem~1.6]{Hei01}. The main ingredient of the original proof is the basic covering theorem given in~\cite[Theorem~1.2]{Hei01}, which we have generalised to~$(X,\rho,\mu)$: see Theorem~\ref{thm:basic cover} above. Once we have Theorem~\ref{thm:basic cover} in hand, the proof of Theorem~\ref{thm:vitali in X} can be carried out as in the original proof, with the constant~$C = A_0 + 4A_0^2$ in place of~$C=5$.
\end{proof}

\subsection{Radon--Nikodym Theorem on~$(X,\rho,\mu)$}\label{subsec:generalise_Radon_Nikodym_thm}
In this section, we discuss an analogue of the well known Radon--Nikodym Theorem, which is stated in Theorem~\ref{lem:Nages analog} below.
This result will be used in Sections~\ref{sec:R1_mec} and~\ref{sec:R1_quasimec} to establish the existence of the generalised Jacobians $\widehat{J}_f$ and~$\widetilde{J}_f$.
\begin{thm}\label{lem:Nages analog}\textbf{(Radon--Nikodym Theorem on~$(X,\rho,\mu)$)}
Let $(X,\rho,\mu)$ be a space of homogeneous type such that the measure~$\mu$ is Borel regular.
Suppose~$\nu$ is another Borel regular measure on~$X$ that is absolutely continuous w.r.t.~the given measure~$\mu$.
 For each~$x \in X$ and~$r > 0$, define the closed ball~$\overline{B}(x,r)$ by
$\overline{B}(x,r) := \{y \in X: \rho(x,y) \leq r\}.$
Then the Radon--Nikodym derivative of~$\nu$ w.r.t.~$\mu$,
$$D(\nu,\mu,x):= \lim_{r \rightarrow 0^+ } \frac{\nu(\overline{B}(x,r))}{\mu(\overline{B}(x,r))},$$
exists and is finite for $\mu$-almost every ($\mu$-a.e.)~$x \in X$. Furthermore, for each $\mu$-measurable set~$S \subset X$,
\[\nu(S) = \int_S D(\nu,\mu,x) \,d\mu(x).\]
\end{thm}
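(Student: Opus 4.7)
My plan is to adapt the classical proof of the Radon--Nikodym differentiation theorem to the quasimetric setting, using Theorem~\ref{thm:vitali in X} in place of the Euclidean Vitali covering lemma. First, reduce to a $\sigma$-finite scenario: the geometrically doubling quasimetric space $X$ is second countable, so $X$ is a countable union of closed quasiballs on each of which $\mu$ is finite, and on each of which $\nu$ is then finite as well since $\nu\ll\mu$ and $\nu$ is locally finite on closed balls. Working on one such piece at a time, define
\[\overline{D}(\nu,\mu,x):=\limsup_{r\to 0^+}\frac{\nu(\overline{B}(x,r))}{\mu(\overline{B}(x,r))},\qquad \underline{D}(\nu,\mu,x):=\liminf_{r\to 0^+}\frac{\nu(\overline{B}(x,r))}{\mu(\overline{B}(x,r))}.\]
By Proposition~\ref{prop:properties_meable_func}(iv), and its straightforward extension to the Borel regular measure $\nu$, each of $x\mapsto\mu(\overline{B}(x,r))$ and $x\mapsto\nu(\overline{B}(x,r))$ is measurable; since these maps are monotone in $r$ for each fixed $x$, the $\limsup$ and $\liminf$ coincide with sequential limits along a countable dense sequence $r_n\downarrow 0$, making $\overline{D}$ and $\underline{D}$ measurable.

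The heart of the argument is to show $\mu(E_{a,b})=0$ for every pair of positive rationals $0<a<b$, where $E_{a,b}:=\{x:\underline{D}(\nu,\mu,x)<a<b<\overline{D}(\nu,\mu,x)\}$. Suppose not. Given $\varepsilon>0$, Borel regularity of $\mu$ yields an open set $U\supset E_{a,b}$ with $\mu(U)\le(1+\varepsilon)\mu(E_{a,b})$. For each $x\in E_{a,b}$, the condition $\underline{D}(\nu,\mu,x)<a$ together with openness of $U$ furnishes arbitrarily small $r>0$ with $\overline{B}(x,r)\subset U$ and $\nu(\overline{B}(x,r))<a\mu(\overline{B}(x,r))$. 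Theorem~\ref{thm:vitali in X} extracts a countable disjoint subfamily $\{B_i\}$ of such balls covering $\mu$-almost all of $E_{a,b}$; since $\nu\ll\mu$, the $\mu$-null remainder is also $\nu$-null, so
\[\nu(E_{a,b})\le\sum_i\nu(B_i)\le a\sum_i\mu(B_i)\le a\mu(U)\le a(1+\varepsilon)\mu(E_{a,b}).\]
A symmetric Vitali argument, this time selecting at each $x\in E_{a,b}$ small balls on which $\nu>b\mu$ (which exist because $\overline{D}(\nu,\mu,x)>b$), gives the reverse inequality $\nu(E_{a,b})\ge b\mu(E_{a,b})$. Letting $\varepsilon\to 0$ contradicts $a<b$, so $\mu(E_{a,b})=0$. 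Taking the union over rational pairs shows $\overline{D}=\underline{D}$ $\mu$-a.e., so $D(\nu,\mu,x)$ exists $\mu$-a.e. Applying the same Vitali scheme to $E_\infty:=\{x:\overline{D}(\nu,\mu,x)=\infty\}$ gives $\nu(E_\infty)\ge M\mu(E_\infty)$ for every $M>0$, which combined with local finiteness of $\nu$ forces $\mu(E_\infty)=0$, so the derivative is finite $\mu$-a.e.

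For the integration formula, I would first prove $\nu(S)=\int_S D(\nu,\mu,x)\,d\mu(x)$ for closed quasiballs $S$ by partitioning $S$ into countably many disjoint small closed quasiballs $\{B_j\}$ via Theorem~\ref{thm:vitali in X}, on each of which $\nu(B_j)/\mu(B_j)$ is pinched between the upper and lower averages of $D(\nu,\mu,\cdot)$, and passing to the limit; an outer-regularity argument then upgrades this to arbitrary Borel, hence $\mu$-measurable, sets $S$. The main obstacle I anticipate is the bookkeeping in the Vitali steps: Theorem~\ref{thm:vitali in X} requires a bounded ambient set together with uniformly bounded radii, so the argument must be localized piece by piece, and one must verify that the disjoint family returned by Vitali consists of the \emph{original} closed quasiballs on which the density inequality was established, rather than their quasimetric dilates appearing in the basic covering theorem. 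Once that localization is carried out, the argument runs parallel to the Euclidean proof.
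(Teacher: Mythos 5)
Your proposal follows essentially the same route as the paper: the paper's proof simply points to the differentiation-of-measures arguments in Mattila (Theorem~2.12) and Shanmugalingam (Lemma~A.0.7), observing that the single Euclidean ingredient that needs replacing is the Vitali covering lemma, which Theorem~\ref{thm:vitali in X} supplies; your write-up carries out exactly that adaptation. (One small nit: your measurability step should pass from monotonicity of $r\mapsto\mu(\overline{B}(x,r))$ and $r\mapsto\nu(\overline{B}(x,r))$ to \emph{right-continuity} of the ratio, and hence to taking $\limsup$/$\liminf$ over a countable dense set of radii, rather than claiming that monotonicity alone makes the ratio's limits sequential; and ``a countable dense sequence $r_n\downarrow 0$'' is self-contradictory as phrased.)
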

Given two Borel regular measures~$\mu$ and~$\nu$, the measure~$\nu$ is said to be \emph{absolutely continuous w.r.t.}~$\mu$ if for all Borel sets~$E \subset X$, $\mu(E) = 0$ implies~$\nu(E) = 0$.

The proof of Theorem~\ref{lem:Nages analog} requires the use of the Vitali Covering Theorem in~$(X,\rho,\mu)$ in Theorem~\ref{thm:vitali in X} above.
\begin{proof}
Theorem~\ref{lem:Nages analog} is motivated by its analogues on Euclidean spaces~$\R^n$ and on metric measure spaces~$(X,d,\mu)$; see~\cite[Theorem~2.12]{Mat95} and~\cite[Lemma~A.0.7]{Sha99}, respectively.
The key ingredient in the proofs given there is the Vitali Covering Theorem, which we have generalised to spaces of homogeneous type~$(X,\rho,\mu)$; see Theorem~\ref{thm:vitali in X} above.
Once the Vitali Covering Theorem is available, Theorem~\ref{lem:Nages analog} can be proved following the same argument as in~\cite{Mat95} and~\cite{Sha99}.

Note that in the statements of~\cite[Theorem~2.12]{Mat95} and~\cite[Lemma~A.0.7]{Sha99} the balls are not explicitly described as being closed, though within their proofs it is clear that these balls are assumed to be closed.
We have chosen to state this assumption explicitly in Theorem~\ref{lem:Nages analog}, to bring out that it is essential in order to apply the Vitali Covering Theorem.
\end{proof}


\section{Proof of Theorem~\ref{thm.R1.metric}}\label{sec:R1_mec}
In  this section, we establish our second main result, namely a generalisation of Reimann's Theorem~1 to metric measure spaces~$(X,d,\mu)$, stated in Theorem~\ref{thm.R1.metric}. 

The four main steps to prove Theorem~\ref{thm.R1.metric} are outlined in the Introduction. Sections~\ref{subsec:Jfhat_exist}--\ref{subsec:log_Jfhat_BMO} correspond to Steps~1--4 of the proof.
In Section~\ref{subsec:Jfhat_exist}, we show that the generalised Jacobian~$\widehat{J}_f$ exists and is finite for $\mu$-a.e. $x \in X$. In Section~\ref{subsec:Jfhat_meable}, we prove that~$\widehat{J}_f$ is measurable. This is required to show that~$\widehat{J}_f$ is a reverse-H\"{o}lder weight in Section~\ref{subsec:Jfhat_RHr}. Lastly, in Section~\ref{subsec:log_Jfhat_BMO}, by applying Theorem~\ref{thm.logBMO}, we conclude that $\log \widehat{J}_f \in \bmo(X)$.

In this section only, when we write~$X$, we mean~$(X,d,\mu)$, and when we write~$B$, we mean the (metric) ball~$\widehat{B}$.


\subsection{Existence of~$\widehat{J}_f$}\label{subsec:Jfhat_exist}
To show the existence of~$\widehat{J}_f$, we will apply the Radon--Nikodym Theorem (Theorem~\ref{lem:Nages analog}) for the measure~$\nu = \mu_f$. Recall the measure~$\mu_f$ is defined by $\mu_f(E) := \mu(f(E))$, where~$E \subset X$ is $\mu$-measurable and~$f$ is an $\eta$-quasisymmetric map of~$X$ onto itself.
To do so, it is required that the measure~$\mu_f$ is Borel regular and absolutely continuous w.r.t.~the measure~$\mu$.
The first property is shown in Lemma~\ref{lem:m_f Borel regular}. The second property is shown in Lemma~\ref{lem:mu_f abs cts} below.
\begin{lem}\label{lem:mu_f abs cts}
Under the same conditions as in Theorem~\ref{thm.R1.metric}, the measure~$\mu_f$ is absolutely continuous w.r.t.~$\mu$.
\end{lem}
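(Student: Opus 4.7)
The plan is to show that for any Borel set $E \subset X$ with $\mu(E) = 0$, we have $\mu_f(E) = \mu(f(E)) = 0$. By the countable subadditivity of the Borel measure $\mu_f$ (Lemma~\ref{lem:mu_f regular}), I can reduce to the case where $E$ is bounded and contained in a bounded open set $U$ on which $\mu_f$ is finite; finiteness holds because $\eta$-quasisymmetry (via the distortion bound $d(f(x_0), f(y)) \leq \eta(1) d(f(x_0), f(z))$ about a reference point) maps bounded sets to bounded sets, and $\mu$ is locally finite. Note also that the $\alpha$-regularity of $X$ gives $X$ nonempty $\tau$-annuli via Proposition~\ref{prop:nonempty_annuli}, so Lemma~\ref{lem:nu_dbl_2} applies and $\mu_f$ is doubling.

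Given $\varepsilon > 0$, Borel regularity of $\mu$ provides an open $V$ with $E \subset V \subset U$ and $\mu(V) < \varepsilon$. The strategy is to cover $f(E)$ efficiently on the image side. For each $x \in E$ and each small $r > 0$ with $B(x, kr) \subset V$, where $k$ is the constant from the Distortion Lemma (Lemma~\ref{lem:nu_dbling}), the lemma yields the chain
\[
f(B(x, r)) \subset \overline{B}(f(x), s(x, r)) \subset \overline{B}(f(x), t(x, r)) \subset f(B(x, kr)).
\]
Continuity of $f$ forces $s(x, r) \to 0$ as $r \to 0$, so $\{\overline{B}(f(x), s(x, r))\}$ is a Vitali-fine cover of $f(E)$. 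Applying the Vitali Covering Theorem (Theorem~\ref{thm:vitali in X}), I extract a countable disjoint subfamily $\{\overline{B}(f(x_i), s_i)\}$ covering $\mu$-a.e.\ of $f(E)$, and $\alpha$-regularity gives $\mu(f(E)) \leq \kappa \sum_i s_i^\alpha$.

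Next I would bound $\sum_i s_i^\alpha$ by a quantity controlled by $\mu(V)$. A closer inspection of the proof of the Distortion Lemma shows that by choosing $\theta$ with $\eta(\theta) \leq 1/N$ for $N$ large (at the cost of enlarging $k$), the inequality $s_i/(2t_i + s_i) < \eta(\theta)$ forces the ratio $s_i/t_i$ to be arbitrarily small. Combined with $t_i^\alpha \leq \kappa \mu_f(B(x_i, k r_i))$ from $\alpha$-regularity applied to $\overline{B}(f(x_i), t_i) \subset f(B(x_i, k r_i))$, this yields $s_i^\alpha \leq \delta^\alpha \kappa \mu_f(B(x_i, k r_i))$ for any prescribed $\delta \in (0,1)$. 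Injectivity of $f$ transfers the disjointness of $\{\overline{B}(f(x_i), s_i)\}$ to the preimages, which lie in the disjoint balls $B(x_i, r_i) \subset V$; combining this with the Basic Covering Theorem (Theorem~\ref{thm:basic cover}) to control the bounded overlap of the enlarged balls $\{B(x_i, k r_i)\}$ bounds the sum by a multiple of $\mu_f(V^*)$ for a slightly enlarged open set $V^* \supset V$.

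The main obstacle is to then close the loop by passing from $\mu_f(V^*)$ back to a small multiple of $\mu(V)$. This is precisely the quantitative comparison of the two measures on small sets that the weak $(1, p)$-Poincar\'e inequality (hypothesis (iv) of Theorem~\ref{thm.R1.metric}) is designed to supply, via curve-family modulus estimates and the fact that $\eta$-quasisymmetric maps distort modulus by a bounded factor; together with the $\alpha$-regularity, these prevent $\mu_f$ from concentrating on sets of small $\mu$-measure. Once this estimate is in hand, letting $\varepsilon \to 0$ yields $\mu(f(E)) = 0$, completing the proof.
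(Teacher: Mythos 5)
Your proposal takes a genuinely different route from the paper, but it has a real gap precisely at the step you flag as ``the main obstacle,'' and that gap is not one that can be patched by the ingredients you assemble.

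The paper's proof is short and indirect: hypothesis (iii) ($\al$-regularity) makes $\mu$ comparable to the Hausdorff $\al$-measure $\mathcal{H}_{\al}$ (via Semmes's Lemma~C.3 and Remark~3.4 in \cite{HK98}); then Corollary~7.13 of \cite{HK98} states directly that $\mathcal{H}_{\al}(f(\cdot))$ is absolutely continuous with respect to $\mathcal{H}_{\al}$; and comparability on both sides transfers this to $\mu_f \ll \mu$. The hard analytic input --- where the Poincar\'e inequality actually enters --- is entirely contained inside that cited result.

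Your plan instead tries to rebuild the estimate from scratch. The covering construction is sound up to the bound $\mu(f(E)) \leq \kappa \sum_i s_i^{\al}$, and $\al$-regularity together with the Distortion Lemma gives $s_i^{\al} \leq \delta^{\al}\kappa\,\mu_f(B(x_i, kr_i))$. But now you arrive at an inequality of the form $\mu_f(E) \lesssim_{k,\delta} \mu_f(V^*)$: both sides are measured by $\mu_f$. To conclude via $\varepsilon \to 0$ you must bound $\mu_f(V^*)$ by something that is small because $\mu(V)$ is small; that is already a quantitative absolute-continuity statement, which is exactly what you set out to prove. There is no elementary way out of this circularity: the measures $\mu_f$ and $\mu$ are not comparable (otherwise $\widehat{J}_f$ would be bounded), so the needed comparison is genuinely nontrivial. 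Writing it off as ``what the Poincar\'e inequality is designed to supply via modulus estimates'' names the missing input but does not supply it --- and that input is precisely \cite[Cor.~7.13]{HK98}, which you never invoke. Two subsidiary issues compound this: the bounded-overlap claim for the enlarged balls $\{B(x_i, kr_i)\}$ does not follow from the Basic Covering Theorem (which produces a subfamily, not an overlap bound), and if you instead try to send $\delta \to 0$ with $k$ fixed you cannot, since $k$ and $\delta$ are coupled through $\eta$ and any overlap constant grows with $k$ while $\delta = \eta(1/k)$ decays at an uncontrolled rate.

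In short: your geometric framework is a reasonable start on re-deriving the Heinonen--Koskela result from first principles, but the lemma as stated in this paper is meant to be obtained directly by citing \cite[Cor.~7.13]{HK98} together with the comparability $\mu \sim \mathcal{H}_{\al}$, and without that citation (or a full substitute for its proof) your argument does not close.
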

\begin{proof}
Fix a Borel set~$E \in X$ with~$\mu(E) = 0$.
As noted in Remark and Convention~3.4 in \cite{HK98} and shown in Lemma~C.3 in~\cite{Sem96}, the Hausdorff $\al$-measure~$\mathcal{H}_{\al}$ is comparable to~$\mu$. Thus~$\mathcal{H}_{\al}(E) = 0$.
Additionally, by Corollary~7.13 in~\cite{HK98}, the measure~$\mathcal{H}_{\al}(f(\cdot))$ is absolutely continuous w.r.t.~the Hausdorff $\al$-measure~$\mathcal{H}_{\al}(\cdot)$, where~$f$ is an $\eta$-quasisymmetric map. This implies~$\mathcal{H}_{\al}(f(E)) = 0$.
Since this is true for all Borel sets~$E \subset X$ with~$\mu(E) = 0$, we conclude that~$\mu_f$ is absolutely continuous w.r.t.~$\mu$.
\end{proof}

\begin{lem}\label{lem:J_f_hat exist}
  Under the same conditions as in Theorem~\ref{thm.R1.metric}, the generalised Jacobian~$\widehat{J}_f(x)$ exists and is finite for $\mu$-a.e.~$x \in X$.
\end{lem}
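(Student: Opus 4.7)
The plan is to obtain the existence and finiteness of $\widehat{J}_f(x)$ for $\mu$-a.e.~$x$ as an immediate consequence of the generalised Radon--Nikodym Theorem on spaces of homogeneous type (Theorem~\ref{lem:Nages analog}), applied to the pair of measures $\mu$ and $\mu_f$ on the metric measure space $(X,d,\mu)$. Recall that by definition
\[
  \widehat{J}_f(x) = \lim_{r \to 0^+}\frac{\mu_f(\overline{B}_d(x,r))}{\mu(\overline{B}_d(x,r))},
\]
which is precisely the Radon--Nikodym derivative $D(\mu_f,\mu,x)$ appearing in Theorem~\ref{lem:Nages analog}. Since every metric measure space is in particular a space of homogeneous type (with quasitriangle constant $A_0 = 1$), Theorem~\ref{lem:Nages analog} applies provided its hypotheses are verified for $\mu$ and $\mu_f$.

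First I would check that $\mu$ qualifies as the base measure. By hypothesis~(i) of Theorem~\ref{thm.R1.metric}, $\mu$ is a locally finite, Borel-regular doubling measure with dense support, which is exactly what Theorem~\ref{lem:Nages analog} requires of the underlying measure. Next I would verify the two conditions on the pullback measure $\mu_f$: that it is Borel regular and that it is absolutely continuous with respect to $\mu$. Borel regularity is supplied by Lemma~\ref{lem:m_f Borel regular}, whose proof only uses that $f$ is a homeomorphism and that $\mu$ is Borel regular, both of which hold here. Absolute continuity $\mu_f \ll \mu$ is supplied by Lemma~\ref{lem:mu_f abs cts}, which in turn relies on the comparability $\mathcal{H}_{\alpha} \sim \mu$ (from $\alpha$-regularity, hypothesis~(iii)) together with Corollary~7.13 in~\cite{HK98}, that $\eta$-quasisymmetric maps are absolutely continuous with respect to the Hausdorff $\alpha$-measure. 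The Poincaré inequality (iv) and local compactness (ii) enter the broader framework (they are needed for later steps and for invoking \cite{HK98}), but they are not directly used at this stage.

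With all hypotheses verified, Theorem~\ref{lem:Nages analog} yields that the limit
\[
  D(\mu_f,\mu,x) = \lim_{r \to 0^+}\frac{\mu_f(\overline{B}_d(x,r))}{\mu(\overline{B}_d(x,r))}
\]
exists and is finite for $\mu$-a.e.~$x \in X$, which is exactly the statement of Lemma~\ref{lem:J_f_hat exist}. The main conceptual obstacle in this sequence is not in the present lemma itself but in Lemma~\ref{lem:mu_f abs cts}, where one must chain together the $\alpha$-regularity of $\mu$ and the quasisymmetric absolute-continuity result of Heinonen and Koskela to transfer $\mu$-null sets to $\mu_f$-null sets; once that lemma is in hand, the present proof is a one-line invocation of the generalised Radon--Nikodym Theorem.
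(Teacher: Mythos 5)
Your proposal matches the paper's proof exactly: identify $\widehat{J}_f$ with the Radon--Nikodym derivative $D(\mu_f,\mu,x)$ and invoke Theorem~\ref{lem:Nages analog}, with the hypotheses on $\mu_f$ supplied by Lemmas~\ref{lem:m_f Borel regular} and~\ref{lem:mu_f abs cts}. The paper states the hypothesis-checking more tersely (it is established in the preceding subsections), but the argument is the same.
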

\begin{proof}
For each~$x \in X$ and~$r >0$,  denote the closed ball in~$(X,d,\mu)$ by
$$\overline{B}(x,r) := \{y \in X: d(x,y) \leq r\}.$$
Notice that for all~$x \in X$ the generalised Jacobian~$\widehat{J}_f$ coincides with the Radon--Nikodym derivative of~$\mu_f$ w.r.t.~$\mu$:
$$\widehat{D}(\mu_f,\mu,x):= \lim_{r \rightarrow 0^+ } \frac{\mu_f(\overline{B}(x,r))}{\mu(\overline{B}(x,r))}
= \widehat{J}_f(x). $$
By Theorem~\ref{lem:Nages analog}, $\widehat{D}(\mu_f,\mu,x)$
exists and is finite $\mu$-a.e.~$x \in X$.
This completes the proof.
\end{proof}

\subsection{$\widehat{J}_f$ is measurable}\label{subsec:Jfhat_meable}
We recall the definition of a measurable function in Definition~\ref{defn:meable_func}.
We also note that the measure~$\mu$ associated with the metric measure space~$(X,d,\mu)$ in Theorem~\ref{thm.R1.metric} is defined on a $\sigma$-algebra $\mathcal{M}$ which contains all Borel sets and quasiballs in~$X$.
In this section, we will show that under the same conditions as in Theorem~\ref{thm.R1.metric}, the generalised Jacobian determinant~$\widehat{J}_f$ is measurable.
In fact,~$\widehat{J}_f \in L^+(X)$, where
  \[L^+(X) := \{g: X \rightarrow [0,\infty] \text{ and $g$ is measurable}\}.\]

\begin{lem}\label{lem:continuous_lim}
Suppose~$\{r_j\}$ is a sequence such that $r_j > 0$  for each~$j \in \N$, and $\lim_{j \rightarrow \infty} r_j = 0$.
Under the same conditions as in Theorem~\ref{thm.R1.metric}, for~$\mu$-a.e. $x \in X$ we have
   \begin{equation}\label{eq_lem:continuous_lim}
   \widehat{J}_f(x) = \lim_{r \rightarrow 0^+}\frac{\mu_f(\overline{B}(x,r))}{\mu(\overline{B}(x,r))}
   = \lim_{j \rightarrow \infty} \frac{\mu_f(\overline{B}(x,r_j))}{\mu(\overline{B}(x,r_j))}.
   \end{equation}
\end{lem}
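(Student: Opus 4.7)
The plan is direct because the statement is essentially the standard sequential characterization of the one-sided functional limit used to define $\widehat{J}_f$ in~\eqref{eq:JacoQuasi_hat}. First, I would invoke Lemma~\ref{lem:J_f_hat exist} to produce a $\mu$-null set $N \subset X$, independent of the chosen sequence $\{r_j\}$, such that for every $x \in X \setminus N$ the limit
\[
\widehat{J}_f(x) = \lim_{r \to 0^+} \frac{\mu_f(\overline{B}(x,r))}{\mu(\overline{B}(x,r))}
\]
exists and is finite. Writing $\phi_x(r) := \mu_f(\overline{B}(x,r))/\mu(\overline{B}(x,r))$ for $r > 0$, the goal reduces to showing that $\phi_x(r_j) \to \widehat{J}_f(x)$ for every such $x$.

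Next, I would apply the standard sequential characterization of a real-variable one-sided limit. Fix $x \in X \setminus N$ and $\varepsilon > 0$; the definition of the functional limit supplies $\delta > 0$ with $|\phi_x(r) - \widehat{J}_f(x)| < \varepsilon$ for all $r \in (0,\delta)$. Since $r_j \to 0^+$ with $r_j > 0$, we have $r_j \in (0,\delta)$ for all $j$ sufficiently large, whence $|\phi_x(r_j) - \widehat{J}_f(x)| < \varepsilon$ for all large $j$. This gives $\lim_{j\to\infty} \phi_x(r_j) = \widehat{J}_f(x)$. Because this argument applies to every $x \in X \setminus N$ and $\mu(N) = 0$, the equality~\eqref{eq_lem:continuous_lim} holds for $\mu$-a.e.\ $x \in X$.

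There will be no real obstacle here; the lemma is purely a bookkeeping statement. Its significance is not intrinsic but prospective: in the next subsection, the measurability of $\widehat{J}_f$ (Lemma~\ref{lem:Jfhat measurable}) is obtained by observing that for each fixed $r_j$ the function $x \mapsto \phi_x(r_j)$ is measurable (using Proposition~\ref{prop:properties_meable_func}(iv) applied separately to the Borel-regular measures $\mu$ and $\mu_f$, together with part~(ii) to handle the reciprocal denominator), and then realizing $\widehat{J}_f$ as the $\mu$-a.e.\ pointwise limit of this \emph{countable} family. Lemma~\ref{lem:continuous_lim} is exactly what licenses the passage from the real-variable limit in the definition of $\widehat{J}_f$ to such a countable sequential limit.
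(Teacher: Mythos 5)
Your proof is correct and follows essentially the same route as the paper: invoke Lemma~\ref{lem:J_f_hat exist} to obtain the $\mu$-a.e.\ existence of the functional limit, then apply the standard sequential characterization of one-sided real limits (the paper cites \cite[Theorem~4.2]{Rud76} where you unpack the $\varepsilon$--$\delta$ argument, but these are the same thing). No gap.
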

\begin{proof}
We recall the standard result that given a function~$h: \R \rightarrow \R$ and~$p \in \R$, then $\lim_{z \rightarrow p} h(z) = L$ if and only if for all sequences~$\{a_j\}$ such that~$\lim_{j \rightarrow \infty} a_j = p$, we have $\lim_{j \rightarrow \infty} h(a_j) = L$~\cite[Theorem~4.2]{Rud76}.

Notice that for each fixed~$x \in X$, the Jacobian determinant~$\widehat{J}_f(x)$ is actually the limit of a function~$h(r)$ as~$r \rightarrow 0^+$:
\[ \widehat{J}_f(x) = \lim_{r \rightarrow 0^+}\frac{\mu_f(\overline{B}(x,r))}{\mu(\overline{B}(x,r))} =: \lim_{r \rightarrow 0^+} h(r). \]
By Lemma~\ref{lem:J_f_hat exist}, this limit exists and is finite for $\mu$-a.e. $x \in X$.
Consider a sequence~$\{r_j\}$ such that for each~$j \in \N$, $r_j > 0$ and~$\lim_{j \rightarrow \infty} r_j = 0$. Then by the standard result mentioned above,
equation~\eqref{eq_lem:continuous_lim} holds for~$\{r_j\}$.
In particular, the limit~$\lim_{j \rightarrow \infty} h(r_j)$
also exists and equals $\widehat{J}_f(x)$.
\end{proof}

\begin{lem}\label{lem:Jfhat measurable}
  Under the same conditions as in Theorem~\ref{thm.R1.metric}, the Jacobian determinant~$\widehat{J}_f \in L^+(X)$. That is,~$\widehat{J}_f$ is a measurable function from~$X$ to~$[0,\infty)$.
\end{lem}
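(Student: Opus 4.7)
The plan is to express $\widehat{J}_f$ as a pointwise limit of an explicit countable sequence of measurable functions, and then invoke the fact that a pointwise limit of measurable functions is measurable.

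First I would fix any sequence $\{r_j\}_{j \in \N}$ with $r_j > 0$ and $r_j \to 0^+$, and for each $j$ define $g_j: X \to [0,\infty]$ by
\[
g_j(x) := \frac{\mu_f(\overline{B}(x,r_j))}{\mu(\overline{B}(x,r_j))}.
\]
By Lemma~\ref{lem:continuous_lim}, for $\mu$-a.e.\ $x \in X$ we have $\widehat{J}_f(x) = \lim_{j \to \infty} g_j(x)$. On the $\mu$-null exceptional set I would redefine $\widehat{J}_f$ to be $0$; since a.e.\ modifications do not affect membership in $L^+(X)$, it suffices to show this redefined function is measurable, and for that it suffices to show each $g_j$ is measurable and take pointwise limits.

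To see each $g_j$ is measurable, I would apply Proposition~\ref{prop:properties_meable_func}(iv) twice: once to the original measure $\mu$, and once to the pullback measure $\mu_f$. The hypothesis of that proposition is that the measure be a Borel doubling measure. For $\mu$ this is immediate from hypotheses~(i) and (iii) of Theorem~\ref{thm.R1.metric}. For $\mu_f$, Borel regularity (in particular, Borel measurability) is provided by Lemma~\ref{lem:m_f Borel regular}; to obtain the doubling property I would apply Lemma~\ref{lem:nu_dbl_2}, whose hypothesis that $X$ has nonempty $\tau$-annuli follows from the $\al$-regularity of $X$ via Proposition~\ref{prop:nonempty_annuli}. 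Thus both $x \mapsto \mu(\overline{B}(x,r_j))$ and $x \mapsto \mu_f(\overline{B}(x,r_j))$ are measurable. Since $\mu$ has dense support and is locally finite, $\mu(\overline{B}(x,r_j)) \in (0,\infty)$ for every $x \in X$, so by Proposition~\ref{prop:properties_meable_func}(ii) the reciprocal $1/\mu(\overline{B}(x,r_j))$ is a measurable finite-valued function. The product of two nonnegative measurable functions is measurable (standard fact), hence $g_j$ is measurable.

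Finally, $\widehat{J}_f = \lim_{j \to \infty} g_j$ $\mu$-a.e., and pointwise limits of sequences of $[0,\infty]$-valued measurable functions are measurable; composing with the a.e.\ redefinition gives $\widehat{J}_f \in L^+(X)$. The only delicate point in the argument is verifying the hypotheses of Proposition~\ref{prop:properties_meable_func}(iv) for the pullback measure $\mu_f$, which is why I would need to chain together Lemmas~\ref{lem:nu_dbl_2} and~\ref{lem:m_f Borel regular} together with Proposition~\ref{prop:nonempty_annuli}; once that is in hand the rest is routine measure theory.
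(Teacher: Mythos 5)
Your proposal is correct and follows essentially the same route as the paper: express $\widehat{J}_f$ as the almost-everywhere pointwise limit of the ratios $g_j(x) = \mu_f(\overline{B}(x,r_j))/\mu(\overline{B}(x,r_j))$, verify each $g_j$ is measurable via Proposition~\ref{prop:properties_meable_func}(iv) applied to $\mu$ and $\mu_f$ together with the closure properties of measurable functions, and conclude by taking limits. You are more explicit than the paper in justifying the hypotheses of Proposition~\ref{prop:properties_meable_func}(iv) for $\mu_f$ (chaining Lemma~\ref{lem:nu_dbl_2}, Proposition~\ref{prop:nonempty_annuli}, and Lemma~\ref{lem:m_f Borel regular}) and in handling the $\mu$-null exceptional set, but these are the same facts the paper is implicitly relying on, so the argument is the same.
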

\begin{proof}
It is clear that~$\widehat{J}_f(x) \geq 0$ for $\mu$-a.e.~$x \in X$.
Let~$\{r_j\}$ be a sequence of radii such that $r_j > 0$ for each~$j \in \N$,  and~$\lim_{j \rightarrow \infty} r_j = 0$.
For each~$j \in \N$, define
\[g_j(x) := \frac{\mu_f(\overline{B}(x,r_j))}{\mu(\overline{B}(x,r_j))}.\]
We claim that for each~$j \in \N$, $g_j$ is measurable.
Since~$\mu$ and~$\mu_f$ are both doubling measures, by Proposition~\ref{prop:properties_meable_func}(iv), the mappings~$\varphi_j(x) := \mu(\overline{B}(x,r_j))$ and~~$\psi_j(x) := \mu_f(\overline{B}(x,r_j))$ are measurable.
Consequently, using Proposition~\ref{prop:properties_meable_func}(ii) and Proposition~2.6 in~\cite{Fol99}, which says that the product of measurable functions is measurable, we conclude that for each~$j$, $g_j(x) = \mu_f(\overline{B}(x,r_j))/\mu(\overline{B}(x,r_j))$ is measurable.

By Lemma~\ref{lem:continuous_lim}, for $\mu$-a.e.~$x \in X$ we have
 \[ \widehat{J}_f(x)
   = \lim_{j \rightarrow \infty} \frac{\mu_f(\overline{B}(x,r_j))}{\mu(\overline{B}(x,r_j))}
   = \lim_{j \rightarrow \infty} g_j(x). \]
As shown in~\cite[Proposition~2.7]{Fol99}, the limit of a sequence of measurable functions is measurable. Thus we conclude that~$\widehat{J}_f$ is measurable.
\end{proof}

\subsection{$\widehat{J}_f$ is a reverse-H\"{o}lder weight}\label{subsec:Jfhat_RHr}
Now we will show that the generalised Jacobian~$\widehat{J}_f$ is a reverse-H\"{o}lder weight.
Our proof relies on a result in~\cite[Theorem~7.11]{HK98}, which shows that the volume derivative~$V_f \in RH_q(X)$, where~$V_f$ is defined to be similar to~$\widehat{J}_f$, except that the measure~$\mu$ is replaced by the Hausdorff $\al$-measure~$\mathcal{H}_{\al}$.
  This is where the assumptions on $X$ are needed.
We start by recalling the result from~\cite{HK98}.
    \begin{thm}\textbf{\cite[Theorem 7.11]{HK98}}\label{7.11}
    Suppose that $(X,d_X,\mathcal{H}_{\alpha})$ and $(Y,d_Y,\mathcal{H}_{\alpha})$ are two $\al$-regular metric measure spaces for some~$\al > 1$, equipped with the Hausdorff $\al$-measure $\mathcal{H}_{\alpha}$. Suppose further that\\
    \indent\textup{(i)} $X$ and~$Y$ are rectifiably connected, \\
    \indent\textup{(ii)} $X$ and~$Y$ are locally compact, and\\
    \indent\textup{(iii)} $X$ admits a weak $(1,p)$-Poincar\'{e} inequality for some~$p$ with $1 \leq p < \al$.\\
Let $f$ be an $\eta$-quasisymmetric map from $X$ onto $Y$.
For $\mathcal{H}_{\alpha}$-a.e.~$x \in X$ and for~$r >0$ define the volume derivative
\[V_f(x) := \lim_{r \rightarrow 0^+} \frac{\mathcal{H}_{\alpha}(f(\overline{B}(x,r)))}{\mathcal{H}_{\alpha}(\overline{B}(x,r))}.\]
Then the pull-back measure $\sigma_f$, defined by
    \begin{equation}\label{hk1}
      \sigma_f(E) := \mathcal{H}_{\alpha}(f(E)), \text{ where } E \subset X,
    \end{equation}
    is $A_{\infty}$-related to the Hausdorff $\alpha$-measure $\mathcal{H}_{\alpha}$ in $X$. Moreover, $d\sigma_f = V_f \,d\mathcal{H}_{\alpha}$
    with $V_f(x) > 0$ for $\mathcal{H}_{\alpha}$-a.e. $x$ in $X$, and there is $\varepsilon >0$ such that
    \begin{equation}\label{hk3}
      \bigg(\intav_{B} V_f^{1+\varepsilon} \,d\mathcal{H}_{\alpha}\bigg)^{1/(1+\varepsilon)} \leq C\intav_{B} V_f \,d\mathcal{H}_{\alpha}
    \end{equation}
    for all balls $B$ in $X$. The statement is quantitative in that all the constants involved in the conclusion depend only on the quasisymmetry function of $f$, on the constants associated with the $\alpha$-regularity of $X$ and $Y$, and on the constant appearing in the Poincar\'{e} inequality.
    \end{thm}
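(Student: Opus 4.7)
The plan is to derive this in three phases: (1) show that the pullback measure $\sigma_f$ is a well-behaved (Borel regular, doubling, absolutely continuous) measure, (2) establish a mutual $A_\infty$-relation between $\sigma_f$ and $\mathcal{H}_\alpha$, and (3) upgrade the $A_\infty$-relation to the reverse H\"{o}lder inequality~\eqref{hk3} via a self-improvement argument, while simultaneously obtaining $V_f$ through a Radon--Nikodym step. Note that under assumptions~(i)--(iii), $X$ is quasiconvex (as noted in Remark~\ref{rem:recti_connect}), and together with $\alpha$-regularity and the weak $(1,p)$-Poincar\'{e} inequality, $X$ becomes a Loewner-type space. This is the machinery I will lean on.

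For Phase~1, doubling of $\sigma_f$ follows from the quasisymmetry of $f$ together with a distortion argument essentially identical to our Lemma~\ref{lem:nu_dbl_2}, using $\alpha$-regularity to convert diameter comparisons into measure comparisons. Borel regularity is automatic from that of $\mathcal{H}_\alpha$ and the fact that $f$ is a homeomorphism (compare Lemma~\ref{lem:m_f Borel regular}). Absolute continuity is more subtle: if $\mathcal{H}_\alpha(E) = 0$, then $E$ has vanishing $p$-capacity in $X$; the Poincar\'{e} inequality and $\alpha$-regularity force quasisymmetric homeomorphisms to map $p$-exceptional sets to $p$-exceptional sets in $Y$, hence $\mathcal{H}_\alpha(f(E)) = 0$. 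With these properties in hand, the Radon--Nikodym derivative $V_f = d\sigma_f/d\mathcal{H}_\alpha$ exists $\mathcal{H}_\alpha$-a.e.~by our Theorem~\ref{lem:Nages analog}. Applying the same argument to $f^{-1}$ (which is also $\eta'$-quasisymmetric) gives $V_f > 0$ almost everywhere.

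For Phase~2, I would show that $\sigma_f$ is $A_\infty$-related to $\mathcal{H}_\alpha$. The core estimate is: if $E \subset B$ satisfies $\mathcal{H}_\alpha(E) < \delta \mathcal{H}_\alpha(B)$, then $\sigma_f(E) < \lambda \sigma_f(B)$, with $\lambda \to 0$ as $\delta \to 0$. This is the main obstacle, and it requires the modulus / capacity machinery: one compares the $p$-modulus of curve families connecting $E$ to $X \setminus 2B$ with that of their images under $f$. Quasisymmetric maps on Loewner spaces satisfy two-sided modulus distortion inequalities, and the Loewner property provides a lower bound for the modulus in terms of a function of the relative position of the connected sets. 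Converting these modulus estimates into measure estimates via the Poincar\'{e} inequality yields the desired quantitative $A_\infty$-relation, with the same argument applied symmetrically to $f^{-1}$.

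For Phase~3, once $\sigma_f$ is $A_\infty$-related to $\mathcal{H}_\alpha$, classical self-improvement on spaces of homogeneous type (via a Calder\'{o}n--Zygmund stopping-time argument analogous to our Theorem~\ref{CZ} combined with the $A_\infty$ estimate) yields that $V_f$ satisfies the reverse H\"{o}lder inequality~\eqref{hk3} for some $\varepsilon > 0$. The constants in~\eqref{hk3} are then tracked through the argument and depend only on $\eta$, the $\alpha$-regularity constants, and the Poincar\'{e} constants, as stated. The hardest part throughout will be Phase~2: controlling measure distortion under $f$ purely from the capacitary/modulus behaviour of curve families is where the hypothesis $p < \alpha$ is genuinely used, and this is precisely the content of Loewner-space theory that makes Heinonen--Koskela's argument work.
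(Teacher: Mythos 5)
This statement is not proved in the paper: it is Theorem~7.11 of Heinonen--Koskela~\cite{HK98}, quoted verbatim and used as a black box in the proof of Theorem~\ref{thm:J_f in RHr}. There is therefore no ``paper's own proof'' to compare your attempt against. Anything you write here is an attempt to reconstruct the argument in~\cite{HK98}, which is outside the scope of the present paper.

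That said, your sketch does identify the right high-level machinery from~\cite{HK98}: the quasiconvexity/Loewner structure coming from the Poincar\'{e} inequality and $\alpha$-regularity, the two-sided modulus distortion of curve families under quasisymmetric maps, the conversion of modulus estimates into the $A_\infty$-relation (Phase~2 is indeed where $p<\alpha$ is genuinely used), and a Gehring-type self-improvement to get the reverse H\"{o}lder inequality~\eqref{hk3}. Two cautions, though. First, you invoke ``our Theorem~\ref{lem:Nages analog}'' and ``our Lemma~\ref{lem:nu_dbl_2}'' as ingredients; that is anachronistic, since those are results of this paper built \emph{on top of} (and modelled after) the HK98 framework, not inputs to it --- a proof of~\cite[Theorem~7.11]{HK98} must stand on earlier covering and differentiation theory, not on the present paper's lemmas. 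Second, your Phase~1 absolute-continuity step (``$\mathcal{H}_\alpha(E)=0 \Rightarrow$ $E$ has vanishing $p$-capacity $\Rightarrow$ $f(E)$ has vanishing $p$-capacity $\Rightarrow \mathcal{H}_\alpha(f(E))=0$'') compresses what is really the hard analytic content of the cited theorem: the $p$-exceptionality-preservation step is precisely where HK98 deploy their quasiconformality characterisations (their Theorem~7.1 and surrounding sections), and asserting it in one line without unpacking the modulus inequalities is not yet a proof. For the purpose of this paper, the correct move is simply to cite~\cite[Theorem~7.11]{HK98}, as the authors do.
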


Theorem~\ref{7.11} concludes that the volume derivative~$V_f$ is a reverse-H\"{o}lder weight.
Using Theorem~\ref{7.11}, we will develop in Theorem~\ref{thm:J_f in RHr} below an analogous result which shows that the Jacobian determinant~$\widehat{J}_f$ is a reverse-H\"{o}lder weight.
Note that we only require the analogue of one of the three conclusions in Theorem~\ref{thm:J_f in RHr}, namely inequality~\eqref{eq:hk3}, to prove Theorem~\ref{thm.R1.metric}.
\begin{thm}\textbf{(A generalisation of Theorem 7.11 in \cite{HK98})}\label{thm:J_f in RHr}
    Under the same conditions as in Theorem~\ref{thm.R1.metric},
    the measure~$\mu_f$ is $A_{\infty}$-related to the measure~$\mu$, where~$\mu_f := \mu(f(E))$ for all $\mu$-measurable sets~$E \subset X$.
     Moreover, for each $\mu$-measurable set~$S \subset X$,
   \begin{equation}\label{eq2:HK3}
    \mu_f(S) = \int_S \widehat{J}_f(x) \,d\mu(x),
   \end{equation}
    and there is $\varepsilon >0$ such that
    \begin{equation}\label{eq:hk3}
      \bigg(\intav_{B} \widehat{J}_f^{1+\varepsilon} \,d\mu\bigg)^{1/(1+\varepsilon)} \leq C\intav_{B} \widehat{J}_f \,d\mu
    \end{equation}
    for all balls $B$ in $X$. The statement is quantitative in that all the constants involved in the conclusion depend only on the quasisymmetry function of $f$, on the constants associated with the $\alpha$-regularity of $X$, and on the constant appearing in the Poincar\'{e} inequality.
\end{thm}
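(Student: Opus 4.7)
The plan is to transfer the Heinonen--Koskela Theorem~\ref{7.11} from the Hausdorff $\alpha$-measure $\mathcal{H}_\alpha$ setting to our doubling measure $\mu$ setting, exploiting the fact that $\alpha$-regularity forces $\mu \sim \mathcal{H}_\alpha$ (with two-sided constants on balls, as noted in the proof of Lemma~\ref{lem:mu_f abs cts}). First, I would verify that Theorem~\ref{7.11} applies with $Y = X$: local compactness, $\alpha$-regularity, and the weak $(1,p)$-Poincar\'e inequality are given directly; rectifiable connectedness follows from Remark~\ref{rem:recti_connect}, since under (ii) and (iv) of Theorem~\ref{thm.R1.metric} the space $X$ is quasiconvex. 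Hence Theorem~\ref{7.11} provides the volume derivative $V_f$, the pullback $\sigma_f(E) := \mathcal{H}_\alpha(f(E))$ with $d\sigma_f = V_f\,d\mathcal{H}_\alpha$, the $A_\infty$-relatedness of $\sigma_f$ and $\mathcal{H}_\alpha$, and the reverse-H\"older inequality~\eqref{hk3} for $V_f$ on all balls.

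Next, using the $\alpha$-regularity constant to fix a two-sided constant $C$ such that $C^{-1}\mathcal{H}_\alpha(E) \leq \mu(E) \leq C\mathcal{H}_\alpha(E)$ for all Borel $E$ (valid since $\mathcal{H}_\alpha$ and $\mu$ are both $\alpha$-regular doubling measures on the same balls), I would deduce the two comparabilities that drive the rest of the argument: $\mu_f(E) \sim_C \sigma_f(E)$ for every measurable $E$ (by applying the comparability to $f(E)$), and consequently the ratios defining $\widehat{J}_f$ and $V_f$ satisfy
\begin{equation*}
  C^{-2}\,V_f(x) \;\leq\; \widehat{J}_f(x) \;\leq\; C^{2}\,V_f(x) \qquad \text{for $\mu$-a.e.\ } x \in X,
\end{equation*}
because the ratios $\mu_f(\overline{B}(x,r))/\mu(\overline{B}(x,r))$ and $\sigma_f(\overline{B}(x,r))/\mathcal{H}_\alpha(\overline{B}(x,r))$ are comparable uniformly in $r$, and the limit $\widehat{J}_f(x)$ exists $\mu$-a.e.\ by Lemma~\ref{lem:J_f_hat exist}.

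With these comparabilities in hand, the three conclusions follow. For the $A_\infty$-relatedness of $\mu_f$ and $\mu$, I would use Proposition of Section~\ref{sec:A_infty measure}: $\mu_f \sim \sigma_f$ gives $\mu_f \sim_{A_\infty} \sigma_f$ (part (a)), then $\sigma_f \sim_{A_\infty} \mathcal{H}_\alpha$ from Theorem~\ref{7.11}, then $\mathcal{H}_\alpha \sim \mu$ gives $\mathcal{H}_\alpha \sim_{A_\infty} \mu$; transitivity (part (b)) yields the claim. For the integral representation~\eqref{eq2:HK3}, Lemma~\ref{lem:mu_f abs cts} gives absolute continuity of $\mu_f$ with respect to $\mu$, and Lemmas~\ref{lem:m_f Borel regular}, \ref{lem:mu_f regular} ensure $\mu_f$ is Borel regular, so the Radon--Nikodym Theorem~\ref{lem:Nages analog} applies and identifies $\widehat{J}_f$ as the Radon--Nikodym derivative, yielding $\mu_f(S) = \int_S \widehat{J}_f\,d\mu$.

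Finally, for the reverse-H\"older inequality~\eqref{eq:hk3}, I would chain together the pointwise comparability $\widehat{J}_f \sim V_f$, the comparability $\mu \sim \mathcal{H}_\alpha$ of the underlying measures, and~\eqref{hk3}:
\begin{equation*}
  \bigg(\intav_{B} \widehat{J}_f^{1+\varepsilon}\,d\mu\bigg)^{1/(1+\varepsilon)}
  \;\lesssim\; \bigg(\intav_{B} V_f^{1+\varepsilon}\,d\mathcal{H}_\alpha\bigg)^{1/(1+\varepsilon)}
  \;\lesssim\; \intav_{B} V_f\,d\mathcal{H}_\alpha
  \;\lesssim\; \intav_{B} \widehat{J}_f\,d\mu,
\end{equation*}
where the middle inequality is Heinonen--Koskela and the outer two use Proposition~\ref{prop:properties_meable_func}(iii). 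The main obstacle I anticipate is the pointwise comparability $\widehat{J}_f \sim V_f$, because a priori one only knows $\widehat{J}_f$ exists $\mu$-a.e.\ and $V_f$ exists $\mathcal{H}_\alpha$-a.e.; I would need to argue that these exceptional sets coincide (which they do, since $\mu \sim \mathcal{H}_\alpha$ have the same null sets) and that the existence of one limit together with ball-wise comparability of the ratios forces the existence and comparability of the other. All constants depend only on the quasisymmetry function of $f$, the $\alpha$-regularity constant, and the Poincar\'e constant, as required.
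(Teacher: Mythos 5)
Your proposal is correct and follows essentially the same route as the paper: verify the hypotheses of Theorem~\ref{7.11} (using Remark~\ref{rem:recti_connect} for rectifiable connectedness), establish the comparabilities $\mu\sim\mathcal{H}_\alpha$, $\mu_f\sim\sigma_f$, and $\widehat{J}_f\sim V_f$ (the paper's Lemma~\ref{lem:mu_f compar sigma_f}), then transfer the reverse-H\"older inequality, the $A_\infty$-relatedness, and the integral representation from $\mathcal{H}_\alpha$ to $\mu$. Your explicit chain of $A_\infty$-relatedness ($\mu_f\sim\sigma_f\sim_{A_\infty}\mathcal{H}_\alpha\sim\mu$, using both parts of the proposition in Section~\ref{sec:A_infty measure}) is in fact slightly cleaner than the paper's wording of the same step, and your attention to the coincidence of $\mu$-null and $\mathcal{H}_\alpha$-null sets makes explicit a point the paper treats implicitly.
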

\begin{proof}
  We start by proving the most important conclusion of Theorem~\ref{thm:J_f in RHr}, which shows that~$\widehat{J}_f$ is a reverse-H\"{o}lder weight. The main ingredients are inequality~\eqref{hk3}, and the comparability of~$\mu$ and~$\mathcal{H}_{\al}$, of~$\mu_f$ and~$\sigma_f$, and of~$\widehat{J}_f$ and~$V_f$ shown in Lemma~\ref{lem:mu_f compar sigma_f} below.

\begin{lem}\label{lem:mu_f compar sigma_f}
  Under the same conditions as in Theorem~\ref{thm.R1.metric},
  the measure~$\mu_f$ is comparable to the measure~$\sigma_f$ defined in~\eqref{hk1}.
  Moreover, the Jacobian determinant~$\widehat{J}_f$ and the volume derivative~$V_f$ are comparable.
\end{lem}
\begin{proof}
  Recall that for Borel sets~$E \subset X$, the measures~$\mu_f$ and~$\sigma_f$ are defined by~$\mu_f(E) := \mu(f(E))$ and~$\sigma_f(E) := \mathcal{H}_{\al}(f(E))$.
   Note that under the same conditions as in Theorem~\ref{thm.R1.metric}, $\widehat{J}_f(x)$ exists and is finite for $\mu$-a.e.~$x \in X$ (Lemma~\ref{lem:J_f_hat exist}). The same proof can be used to show that~$V_f(x)$  exists and is finite for $\mathcal{H}_{\al}$-a.e.~$x \in X$, by replacing~$\mu$ by~$\mathcal{H}_{\al}$ and~$\mu_f$ by~$\sigma_f$.

  Recall that~$\mu$ is comparable to~$\mathcal{H}_{\al}$, say with constant~$C_5$. Moreover, as~$f$ is a homeomorphism, for each Borel set~$E \subset X$, $f(E)$ is also a Borel set in~$X$.
   Therefore~$\mu_f$ is comparable to~$\sigma_f$ also with constant~$C_5$.

   Using the comparability of~$\mu$ and~$\mathcal{H}_{\al}$ and of~$\mu_f$ and~$\sigma_f$, we can easily see that~$\widehat{J}_f$ is comparable to~$V_f$ with constant~$C_5^2$:
   \[C_5^{-2}\lim_{r \rightarrow 0^+}\frac{\mu_f(\overline{B}(x,r))}{\mu(\overline{B}(x,r))}
   \leq \lim_{r \rightarrow 0^+} \frac{\sigma_f(\overline{B}(x,r)}{\mathcal{H}_{\alpha}(\overline{B}(x,r))}
   \leq C_5^2 \lim_{r \rightarrow 0^+}\frac{\mu_f(\overline{B}(x,r))}{\mu(\overline{B}(x,r))}. \qedhere\]
\end{proof}

Returning to the proof of Theorem~\ref{thm:J_f in RHr}, now we are ready to show inequality~\eqref{eq:hk3}.
Fix a ball~$B \subset X$. Take~$\e > 0$ from Theorem~\ref{7.11} and the constant~$C$ from~\eqref{hk3}.
Using  the fact that~$\mu$ and~$\mathcal{H}_{\al}$ are comparable with constant~$C_5$, Proposition~\ref{prop:properties_meable_func}(iii), Lemma~\ref{lem:mu_f compar sigma_f}, and inequality~\eqref{hk3} from Theorem~\ref{7.11}, we obtain
\begin{eqnarray*}
  \bigg(\frac{1}{\mu(B)}\int_{B} \widehat{J}_f^{1+\varepsilon} \,d\mu\bigg)^{1/(1+\varepsilon)}
   &\leq& C_5^{2/(1+\e)}  \bigg( \frac{1}{\mathcal{H}_{\al}(B)} \int_B \widehat{J}_f^{1+\varepsilon} \,d\mathcal{H}_{\al} \bigg)^{1/(1+\varepsilon)} \\
   &\leq& C_5^{2/(1+\e)} C_5^2  \bigg( \frac{1}{\mathcal{H}_{\al}(B)} \int_B V_f^{1+\e} \,d\mathcal{H}_{\al} \bigg)^{1/(1+\varepsilon)} \\
   &\leq& C C_5^{2/(1+\e)} C_5^4   \frac{1}{\mathcal{H}_{\al}(B)} \int_B \widehat{J}_f \,d\mathcal{H}_{\al}\\
   &\leq& C C_5^{2/(1+\e)} C_5^6   \frac{1}{\mu(B)}\int_B  \widehat{J}_f \,d\mu.
\end{eqnarray*}
Thus we have established inequality~\eqref{eq:hk3}, with a constant depending only on the constants in Theorem~\ref{7.11} and  the comparability of~$\mu$ and~$\mathcal{H}_{\al}$.

The first result of Theorem~\ref{thm:J_f in RHr} follows from the facts that $\mu_f$ is comparable to~$\sigma_f$ and~$\mu_f$ is $A_{\infty}$-related to~$\sigma_f$, and the properties of $A_{\infty}$-relatedness shown in Proposition~\ref{prop:properties_A_infty}.
The second result of Theorem~\ref{thm:J_f in RHr}, which is equation~\eqref{eq2:HK3}, is the same as the last conclusion of the Radon--Nikodym Theorem (Theorem~\ref{lem:Nages analog}), in the special case of a metric measure space, when the measure~$\nu$ is replaced by~$\mu_f$.
We omit the details.
\end{proof}
\subsection{$\log \widehat{J}_f \in \bmo$}\label{subsec:log_Jfhat_BMO}
We have shown that under the hypotheses of Theorem~\ref{thm.R1.metric}, the generalised Jacobian~$\widehat{J}_f$ is a reverse-H\"{o}lder weight on~$(X,d,\mu)$.
Finally, by applying Theorem~\ref{thm.logBMO} with $w = \widehat{J}_f$, we conclude that~$\log \widehat{J}_f \in \bmo(X,d,\mu)$. This completes the proof of Theorem~\ref{thm.R1.metric}.

\begin{rem}\label{rem:hypo_use}
Hypotheses~(i)--(iv) come from~\cite[Theorem~7.11]{HK98}, as we use this theorem in Step~3 of our proof to show that $\widehat{J}_f \in RH_q(X)$. Similarly, these hypotheses are also required in Corollary~\ref{thm.R1.quasimetric} (see hypotheses~(a)--(d))
and in Example~\ref{thm:construct} below (see hypotheses~(i)--(iv)).

We also note that under the assumptions of Theorem~\ref{thm.R1.metric}, the space~$X$ is rectifiably connected. Please refer to Remark~\ref{rem:recti_connect} for more details.
\end{rem}

\begin{rem}\label{rem:alter_hypo}
We recall Theorem~1.1 in~\cite{KZ08}, which shows that given a complete metric measure space~$(X,d,\mu)$ with~$\mu$ being Borel and doubling and~$X$ admitting a $(1,\al)$-Poincar\'{e} inequality, where~$\al > 1$, there exists~$\e>0$ such that~$(X,d,\mu)$ admits a $(1,p)$-Poincar\'{e} inequality for every~$p > \al-\e$, quantitatively.
In our Theorem~\ref{thm.R1.metric},
it may be possible to replace hypothesis~(iv) by the weaker assumption that~$X$ admits a weak $(1,\al)$-Poincar\'{e} inequality, where~$\al > 1$ is from hypothesis~(iii), using Theorem~1.1 in~\cite{KZ08}. In that case, we would have to impose the additional assumption that~$X$ is complete.

We also recall Theorem~1.3 in~\cite{KZ08}, which shows that a complete $\al$-regular metric measure space, with $\al >1$, is a Loewner space if and only if it supports a $(1, \al - \e)$-Poincar\'{e} inequality for some~$\e > 0$, quantitatively.
In our Theorem~\ref{thm.R1.metric},
it may be possible to remove hypothesis~(iv), using Theorem~1.3 in~\cite{KZ08}. In that case, we would have to impose the additional assumptions that~$X$ is complete and is a Loewner space.
\end{rem}


\section{Proof of Corollary~\ref{thm.R1.quasimetric}}\label{sec:R1_quasimec}
In this section, we establish our third main result, which is an analogue of Reimann's Theorem~1 on spaces of homogeneous type~$(X,\rho,\mu)$, stated in Corollary~\ref{thm.R1.quasimetric}. 
The seven steps of the proof of Corollary~\ref{thm.R1.quasimetric} are outlined in the Introduction.
To complete the proof of Corollary~\ref{thm.R1.quasimetric}, our task is to prove the lemmas mentioned there, as well as to complete Step~6.

Before that, we prove Proposition~\ref{prop:compare_ball_quasiball} which is useful later. Proposition~\ref{prop:compare_ball_quasiball} says that the quasiballs~$\widetilde{B}$ on~$(X,\rho,\mu)$ are comparable to the (metric) balls~$\widehat{B}$ on~$(X,d_{\e},\mu)$, where~$d_{\e}$ is a metric which is comparable to the snowflaking~$\rho_{\e}$ of the quasimetric~$\rho$.

\begin{prop}\label{prop:compare_ball_quasiball}
Suppose~$(X,\rho,\mu)$ is a space of homogeneous type.
Given~$\e \in (0,1]$, let $\rho_{\e}(x,y) := \rho(x,y)^{\e}$ for all~$x,y \in X$.
Let~$d_{\e}$ be a metric which is comparable to~$\rho_{\e}$ with constant~$C_{\e} \geq 1$.
Then for all~$x \in X$ and~$r > 0$,
\begin{equation}\label{eq:Hei6}
  \widehat{B}(x,\Cc^{-1}r^{\e}) \subset \widetilde{B}(x,r) \subset \widehat{B}(x,\Cc r^{\e}). 
\end{equation}

For each~$x \in X$ and~$r > 0$, let~$\overline{B}_{\rho}(x,r)$ and~$\overline{B}_{d_{\e}}(x,r)$ be the closed balls on~$(X,\rho,\mu)$ and~$(X,d_{\e},\mu)$, respectively. Then it also holds that
\begin{equation}\label{eq:Hei6a}
  \overline{B}_{d_{\e}}(x,\Cc^{-1}r^{\e}) \subset \overline{B}_{\rho}(x,r) \subset \overline{B}_{d_{\e}}(x,\Cc r^{\e}). 
\end{equation}
\end{prop}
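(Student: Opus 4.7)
The plan is to unfold the definitions and apply the comparability hypothesis $C_\e^{-1}\rho_\e(x,y) \leq d_\e(x,y) \leq \Cc\rho_\e(x,y)$ directly. This is an elementary set-theoretic argument with no substantive obstacle; I include it mainly to fix notation and because the two inclusions use the two sides of the comparability estimate separately.

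First I would prove the left inclusion in~\eqref{eq:Hei6}. Fix $y \in \widehat{B}(x,\Cc^{-1}r^{\e})$, so $d_{\e}(x,y) < \Cc^{-1}r^{\e}$. The left half of the comparability estimate gives $\Cc^{-1}\rho(x,y)^{\e} = \Cc^{-1}\rho_{\e}(x,y) \leq d_{\e}(x,y) < \Cc^{-1}r^{\e}$, hence $\rho(x,y)^{\e} < r^{\e}$. Since $\e > 0$, the map $t \mapsto t^{\e}$ is strictly increasing on $[0,\infty)$, so $\rho(x,y) < r$ and therefore $y \in \widetilde{B}(x,r)$.

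Next I would prove the right inclusion in~\eqref{eq:Hei6}. Fix $y \in \widetilde{B}(x,r)$, so $\rho(x,y) < r$, i.e.\ $\rho_{\e}(x,y) = \rho(x,y)^{\e} < r^{\e}$. The right half of the comparability estimate then gives $d_{\e}(x,y) \leq \Cc\rho_{\e}(x,y) < \Cc r^{\e}$, so $y \in \widehat{B}(x,\Cc r^{\e})$.

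Finally, the inclusions in~\eqref{eq:Hei6a} for the closed balls follow by exactly the same chain of inequalities, with every strict inequality replaced by its non-strict counterpart; the monotonicity of $t\mapsto t^{\e}$ on $[0,\infty)$ is still what converts $\rho(x,y)^{\e} \leq r^{\e}$ into $\rho(x,y) \leq r$. I would indicate this at the end without rewriting the calculation.
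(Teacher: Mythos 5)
Your proof is correct and follows essentially the same route as the paper: unwind the definitions of the balls and apply the two halves of the comparability estimate $\Cc^{-1}\rho_{\e}\leq d_{\e}\leq\Cc\rho_{\e}$, using the monotonicity of $t\mapsto t^{\e}$ to pass between $\rho_{\e}$ and $\rho$. The only difference is presentational — you write out the second inclusion and the closed-ball case explicitly, whereas the paper proves only the first inclusion and notes the remaining cases are analogous.
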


\begin{proof}
Fix~$x \in X$ and~$r >0$. Fix~$y \in \widehat{B}(x,\Cc^{-1}r^{\e})$.
Then~$d_{\e}(x,y) < \Cc^{-1}r^{\e}$. Since~$d_{\e} \sim_{C_{\e}} \rho_{\e}$, we have $\rho_{\e}(x,y)< r^{\e}$, and so~$\rho(x,y) < r$. Therefore, $y \in \widetilde{B}(x,r)$. Hence the first inclusion of~\eqref{eq:Hei6} holds.

The second inclusion of~\eqref{eq:Hei6} can be proved analogously, completing the proof of Proposition~\ref{prop:compare_ball_quasiball}.

Property~\eqref{eq:Hei6a} can be shown using the same proof structure.
\end{proof}

Now we will state and prove Lemmas~\ref{lem:loccompequi},
\ref{lem:loccompactequi}, \ref{lem:homeoequi}, \ref{lem:J_f_tilde exist} and \ref{lem:Jacoequi}.
\subsection{Passing from~$\rho$ to~$d_{\e}$ preserves local compactness}\label{subsec:imply loccom}
\begin{lem}\label{lem:loccompequi}
  Suppose~$(X,\rho,\mu)$ is a space of homogeneous type.
  Given~$\e \in (0,1]$, let~$\rho_{\e}(x,y) := \rho(x,y)^{\e}$ for all~$x,y \in X$.
  Let~$d_{\e}$ be a metric which is comparable to~$\rho_{\e}$ with constant~$C_{\e} \geq 1$.
  Then~$(X,\rho,\mu)$ is locally compact if and only if~$(X,d_{\e},\mu)$ is also locally compact.
\end{lem}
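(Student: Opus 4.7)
The plan is to observe that local compactness is a purely topological property, and then to show that $\rho$ and $d_{\e}$ induce the same topology on $X$. Once that is established, the equivalence is immediate, since Definition~\ref{def:compact} only refers to open sets and compact sets, both of which are determined by the topology.

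To show that the $\rho$-topology and the $d_{\e}$-topology on $X$ coincide, I would use the nested inclusions from Proposition~\ref{prop:compare_ball_quasiball}. Fix $U \subset X$ and $x \in U$. If $U$ is $\rho$-open, there exists $r > 0$ such that $\widetilde{B}(x,r) \subset U$, and then by~\eqref{eq:Hei6},
\[
\widehat{B}(x,\Cc^{-1}r^{\e}) \subset \widetilde{B}(x,r) \subset U,
\]
so $U$ is a $d_{\e}$-neighbourhood of~$x$. Conversely, if $U$ is $d_{\e}$-open, there exists $s > 0$ such that $\widehat{B}(x,s) \subset U$; choosing $r := (s/\Cc)^{1/\e}$ and applying~\eqref{eq:Hei6} again gives
\[
\widetilde{B}(x,r) \subset \widehat{B}(x,\Cc r^{\e}) = \widehat{B}(x,s) \subset U,
\]
so $U$ is a $\rho$-neighbourhood of~$x$. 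Since $x \in U$ was arbitrary, $U$ is $\rho$-open if and only if it is $d_{\e}$-open.

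With the two topologies identified, the collections of open subsets of $X$ are identical in $(X,\rho)$ and $(X,d_{\e})$, and consequently the collections of compact subsets are identical as well (a set is compact precisely when every open cover has a finite subcover, and the notion of open cover is topological). Therefore, for any $x \in X$, the existence of an open set $O$ and a compact set $K$ with $x \in O \subset K$ with respect to $\rho$ is equivalent to the existence of such $O$ and $K$ with respect to $d_{\e}$. This yields the biconditional stated in Lemma~\ref{lem:loccompequi}.

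I do not anticipate any serious obstacle here: the lemma is essentially a direct topological consequence of the ball-comparability already recorded in Proposition~\ref{prop:compare_ball_quasiball}, and no further hypotheses on $(X,\rho,\mu)$ (such as Borel regularity of $\mu$ or $\al$-regularity) are needed for the argument.
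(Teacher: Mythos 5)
Your proof is correct and follows essentially the same route as the paper: both rely on the ball comparability recorded in Proposition~\ref{prop:compare_ball_quasiball} to transfer open sets between $(X,\rho)$ and $(X,d_{\e})$, and then pass to compact sets. Your packaging is arguably cleaner---by first identifying the two topologies outright, you make both the open-set and compact-set directions immediate, and you avoid a small awkwardness in the paper's Claim~\ref{claim:rhoCompact_dCompact}, which, in order to show $K$ is compact with respect to $d_{\e}$, begins with a $\rho$-open cover of $K$ rather than an arbitrary $d_{\e}$-open cover.
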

\begin{proof}
We will show that if~$(X,\rho,\mu)$ is locally compact then~$(X,d_{\e},\mu)$ is also locally compact. The proof of the reverse direction is similar.

Suppose the space of homogeneous type~$(X,\rho,\mu)$ is locally compact. That is, for all~$x \in X$, there exist an open set~$O$ w.r.t.~$\rho$ and a compact set~$K$ w.r.t.~$\rho$ such that~$x \in O \subset K$.

A set~$O$ is \emph{open w.r.t.~$\rho$} means for all~$x \in O$, there exists a quasiball~$\widetilde{B}(x,r_x)$ centred at~$x$ such that~$x \in \widetilde{B}(x,r_x) \subset O$.
A set~$K$ is \emph{compact w.r.t.~$\rho$} means every open cover w.r.t.~$\rho$ of~$K$ has a finite subcover.
The definitions of \emph{open sets w.r.t.~$d_{\e}$} and \emph{compact sets w.r.t.~$d_{\e}$} are analogous, except that the quasiball~$\widetilde{B}$ is replaced by the ball~$\widehat{B}$.

We want to show that~$(X,d_{\e},\mu)$ is locally compact. That is, for all~$x \in X$, there exist an open set~$O$ w.r.t.~$d_{\e}$ and a compact set~$K$ w.r.t.~$d_{\e}$ such that~$x \in O \subset K$.

%

Fix a point~$x \in X$. Since~$(X,\rho,\mu)$ is locally compact, there exist an open set~$O$ w.r.t.~$\rho$ and a compact set~$K$ w.r.t.~$\rho$ such that~$x \in O \subset K$.
We claim that the set~$O$ is also open w.r.t~$d_{\e}$. This will be proved in Claim~\ref{claim:rhoOpen_dOpen} below. Moreover, the set~$K$ is also compact w.r.t.~$d_{\e}$. This will be shown in Claim~\ref{claim:rhoCompact_dCompact} below. Since this is true for all~$x \in X$, we conclude that~$(X,d_{\e},\mu)$ is locally compact. The proofs of Claims~\ref{claim:rhoOpen_dOpen} and~\ref{claim:rhoCompact_dCompact} below complete the proof of Lemma~\ref{lem:loccompequi}.

\begin{claim}\label{claim:rhoOpen_dOpen}
     A set~$O$ that is open w.r.t.~$\rho$ is also open w.r.t.~$d_{\e}$.
\end{claim}
\noindent\emph{Proof of Claim~\ref{claim:rhoOpen_dOpen}:}
Since~$O$ is open w.r.t.~$\rho$, for each~$y \in O$, there exists a quasiball~$\widetilde{B}(y,r_y)$ centred at~$y$ such that~$y \in \widetilde{B}(y,r_y) \subset O$.
Using property~\eqref{eq:Hei6} we have
$y \in \widehat{B}(y, \Cc^{-1}r_y^{\e}) \subset \widetilde{B}(y,r_y) \subset O$.
Thus, for each~$y \in O$, there exists a (metric) ball~$\widehat{B}_y$ centred at~$y$ such that~$y \in \widehat{B}_y \subset O$. Claim~\ref{claim:rhoOpen_dOpen} is established. \hfill\(\Box\)

\begin{claim}\label{claim:rhoCompact_dCompact}
A set~$K$ that is compact w.r.t.~$\rho$ is also compact w.r.t.~$d_{\e}$.
\end{claim}
\noindent\emph{Proof of Claim~\ref{claim:rhoCompact_dCompact}:}
Let~$\{O_{\al}\}$ be an open cover w.r.t.~$\rho$ of~$K$. So each~$O_{\al}$ is open w.r.t.~$\rho$ and~$K \subset \cup_{\al} O_{\al}$.
Hence by Claim~\ref{claim:rhoOpen_dOpen}, each~$O_{\al}$ is also open w.r.t.~$d_{\e}$, and so~$\{O_{\al}\}$ is also an open cover w.r.t.~$d_{\e}$ of~$K$. In addition, since~$K$ is compact w.r.t.~$\rho$, there exists a finite subcollection~$\{O_1, O_2, \ldots, O_n\}$ such that~$K \subset \cup_{i =1}^n O_i$, and therefore~$K$ is  also compact w.r.t.~$d_{\e}$. \hfill\(\Box\)

This concludes the proof of Lemma~\ref{lem:loccompequi}.
\end{proof}

\subsection{Passing from~$\rho$ to~$d_{\e}$ preserves Ahlfors regularity}\label{subsec:imply alregular}
\begin{lem}\label{lem:loccompactequi}
  Suppose~$(X,\rho,\mu)$ is a space of homogeneous type.
  Given~$\e \in (0,1]$, let~$\rho_{\e}(x,y) := \rho(x,y)^{\e}$ for all~$x,y \in X$.
  Let~$d_{\e}$ be a metric which is comparable to~$\rho_{\e}$ with constant~$C_{\e} \geq 1$.
  Then~$(X,\rho,\mu)$ is an~$\al$-regular space with constant~$\kappa$ if and only if~$(X,d_{\e},\mu)$ is an $\al/\e$-regular space with constant~$\kappa_0 = A_1^{1+\log_2\Cc} \kappa$.
\end{lem}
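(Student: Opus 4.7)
The plan is to reduce the Ahlfors regularity of $(X,d_\e,\mu)$ to that of $(X,\rho,\mu)$ through the ball comparability given in Proposition~\ref{prop:compare_ball_quasiball}, and vice versa. Concretely, setting $r = (s/C_\e)^{1/\e}$ and then $r = (C_\e s)^{1/\e}$ in the inclusions $\widehat{B}(x,C_\e^{-1} r^\e) \subset \widetilde{B}(x,r) \subset \widehat{B}(x,C_\e r^\e)$ (and the analogous inclusions in the other direction), I obtain the two-sided sandwich
\[
\widetilde{B}\bigl(x,(s/C_\e)^{1/\e}\bigr) \subset \widehat{B}(x,s) \subset \widetilde{B}\bigl(x,(C_\e s)^{1/\e}\bigr),
\qquad
\widehat{B}(x,C_\e^{-1} r^\e) \subset \widetilde{B}(x,r) \subset \widehat{B}(x,C_\e r^\e).
\]
These are the only geometric inputs; after this, everything is monotonicity of $\mu$ together with the Ahlfors estimates in the assumed space.

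For the forward implication, assume $(X,\rho,\mu)$ is $\alpha$-regular with constant $\kappa$, and fix $x \in X$ and $s>0$ with $s<\diam_{d_\e}X$. From the sandwich I immediately deduce
\[
\mu(\widehat{B}(x,s)) \leq \mu\bigl(\widetilde{B}(x,(C_\e s)^{1/\e})\bigr) \leq \kappa\,(C_\e s)^{\alpha/\e},
\]
and
\[
\mu(\widehat{B}(x,s)) \geq \mu\bigl(\widetilde{B}(x,(s/C_\e)^{1/\e})\bigr) \geq \kappa^{-1}(s/C_\e)^{\alpha/\e}.
\]
This already gives $\alpha/\e$-regularity of $(X,d_\e,\mu)$ with some constant $C'\kappa$. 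To sharpen to the stated constant $\kappa_0 = A_1^{1+\log_2 C_\e}\kappa$, I then apply the doubling inequality~\eqref{eq:dbl_measure_2} of $\mu$ to trade powers of $C_\e$ on the radius for a factor of $A_1^{1+\log_2 C_\e}$ on the measure, collapsing the comparison between $\widetilde{B}(x,(C_\e s)^{1/\e})$ and the reference quasiball of the ``natural'' radius, which then matches the $\alpha$-regular bound with constant $\kappa$ cleanly.

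For the reverse implication, I run exactly the same argument with the roles of $\widetilde{B}$ and $\widehat{B}$ swapped, using the second form of the sandwich. Given $\alpha/\e$-regularity of $(X,d_\e,\mu)$ with constant $\kappa_0$, I get
\[
\kappa_0^{-1} C_\e^{-\alpha/\e}\, r^\alpha \leq \mu(\widetilde{B}(x,r)) \leq \kappa_0\, C_\e^{\alpha/\e}\, r^\alpha,
\]
and again absorb the $C_\e$-factors via doubling of $\mu$ to recover the value of $\kappa$ quoted in the lemma.

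The proof is essentially bookkeeping; the only place one needs care is the constant chase to produce the precise form $A_1^{1+\log_2 C_\e}\kappa$ rather than the crude $\kappa\,C_\e^{\alpha/\e}$ that falls out of a direct substitution. I expect this constant-optimisation step to be the main (mild) obstacle: it is not difficult but requires choosing the intermediate radius judiciously so that exactly one application of the doubling estimate~\eqref{eq:dbl_measure_2}, with dilation factor~$C_\e$, replaces the exponent $\alpha/\e$ with the exponent $1+\log_2 C_\e$ on $A_1$. Everything else, including measurability and the restriction $s<\diam_{d_\e}X$ (equivalently $r<\diam_\rho X$ up to the constant $C_\e$), is handled by Proposition~\ref{prop:compare_ball_quasiball}.
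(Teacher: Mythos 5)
Your overall strategy---sandwiching balls and quasiballs via Proposition~\ref{prop:compare_ball_quasiball}, feeding one regularity estimate through the sandwich to produce the other, and symmetrising for the converse---is exactly the strategy of the paper. However, the structural idea of ``first obtain a crude constant and then refine it by a separate doubling step'' does not work as stated, and this is more than a presentational nicety. From the ball inclusions alone you correctly derive
\[
\kappa^{-1}C_{\e}^{-\alpha/\e}\,s^{\alpha/\e}\ \leq\ \mu(\widehat{B}(x,s))\ \leq\ \kappa\,C_{\e}^{\alpha/\e}\,s^{\alpha/\e},
\]
i.e.\ Ahlfors $\alpha/\e$-regularity with constant $\kappa\,C_{\e}^{\alpha/\e}$. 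There is no post-hoc doubling manoeuvre that transforms this bound into one with constant $A_1^{1+\log_2 C_{\e}}\kappa$: the two quantities are not comparable in general (neither dominates the other as $\alpha$, $\e$, $C_{\e}$ and $A_1$ vary independently), and ``trading powers of $C_{\e}$ for powers of $A_1$'' is not a transformation one can perform on an already-derived inequality.

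To obtain the constant actually stated in the lemma, the doubling estimate must be inserted \emph{before} the $\alpha$-regularity estimate, and crucially it must be applied to \emph{metric} balls with dilation factor exactly $C_{\e}$ (not to quasiballs, and not with factor $C_{\e}^{1/\e}$). That is the chain the paper uses:
\[
\mu(\widehat{B}(x,r^{\e}))\ \leq\ A_1^{1+\log_2 C_{\e}}\,\mu(\widehat{B}(x,C_{\e}^{-1}r^{\e}))\ \leq\ A_1^{1+\log_2 C_{\e}}\,\mu(\widetilde{B}(x,r))\ \leq\ A_1^{1+\log_2 C_{\e}}\,\kappa\,r^{\alpha},
\]
and similarly for the lower bound using $\widehat{B}(x,C_{\e}r^{\e})$; now set $s=r^{\e}$. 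Your final paragraph does gesture at exactly this (``one application of the doubling estimate with dilation factor $C_{\e}$''), so the core idea is present, but it is incompatible with the ``crude-first'' framing in the middle of the write-up. The fix is simply to discard the crude computation and instead interpose doubling on metric balls at the correct point; once you do that, your proof coincides with the paper's. The reverse direction is indeed symmetric, as you say.
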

\begin{proof}
We will show that if~$(X,\rho,\mu)$ is an~$\al$-regular space, then~$(X,d_{\e},\mu)$ is an $\al/\e$-regular space. The proof of the reverse direction is similar.
We recall the definition of $\al$-regular space in Definition~\ref{def:alpha},
and the result shown in~Proposition~\ref{prop:compare_ball_quasiball}.
These together with the doubling property of~$\mu$ shown in~\eqref{eq:dbl_measure_2} give us
\begin{eqnarray*}
 A_1^{-(1+\log_2\Cc)} \mu(\widehat{B}(x,r^{\e}))
 &\leq& \mu(\widehat{B}(x,\Cc^{-1}r^{\e}))
 \leq \mu(\widetilde{B}(x,r))
  \leq \kappa r^{\alpha},
\end{eqnarray*}
and
\begin{equation*}
  \kappa^{-1}r^{\alpha} \leq \mu(\widetilde{B}(x,r))
  \leq \mu(\widehat{B}(x,\Cc r^{\e}))
  \leq A_1^{1+\log_2\Cc}\mu(\widehat{B}(x, r^{\e})).
\end{equation*}
It follows that
\[A_1^{-(1+\log_2\Cc)} \kappa^{-1}r^{\alpha} \leq \mu(\widehat{B}(x, r^{\e})) \leq A_1^{1+\log_2\Cc} \kappa r^{\alpha}.\]
Hence, the space~$(X,d_{\e},\mu)$ is $\al/\e$-regular with the constant~$\kappa_0 = A_1^{1+\log_2\Cc} \kappa.$
\end{proof}

\subsection{Passing from~$\rho$ to~$d_{\e}$ preserves quasisymmetry of functions}\label{subsec:imply_quasisym}
\begin{lem}\label{lem:homeoequi}
  Suppose~$(X,\rho,\mu)$ is a space of homogeneous type.
  Given~$\e \in (0,1]$, let~$\rho_{\e}(x,y) := \rho(x,y)^{\e}$ for all~$x,y \in X$.
  Let~$d_{\e}$ be a metric which is comparable to~$\rho_{\e}$ with constant~$C_{\e} \geq 1$.
  If a homeomorphism~$f$ is $\eta$-quasisymmetric from~$(X,\rho,\mu)$ onto itself, then~$f$ is $\zeta$-quasisymmetric from~$(X,d_{\e},\mu)$ onto itself, where~$\zeta: [0,\infty)\rightarrow [0,\infty)$ is the homeomorphism defined by
  $\zeta(\theta) := \Cc^2\eta([\Cc^2\theta]^{1/\e})^{\e}.$
\end{lem}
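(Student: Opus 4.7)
The plan is essentially a direct computation exploiting the two-sided comparability $C_{\e}^{-1}\rho(x,y)^{\e} \le d_{\e}(x,y) \le C_{\e}\rho(x,y)^{\e}$ at every relevant step, together with the $\eta$-quasisymmetry of $f$ with respect to $\rho$. Before tackling the ratio bound, I would first verify that the statement even makes sense in the $d_{\e}$ setting: namely, $f$ is still a homeomorphism of $(X,d_{\e})$ onto itself, because the comparability of $d_{\e}$ with $\rho_{\e} = \rho^{\e}$ (and hence of the balls $\widehat{B}$ and $\widetilde{B}$ via Proposition~\ref{prop:compare_ball_quasiball}) shows that $\rho$ and $d_{\e}$ induce the same topology on $X$. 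I would also check that $\zeta(\theta) := C_{\e}^2 \eta\bigl((C_{\e}^2\theta)^{1/\e}\bigr)^{\e}$ is an increasing homeomorphism of $[0,\infty)$ onto itself: it is a composition of the increasing homeomorphisms $\theta \mapsto (C_{\e}^2\theta)^{1/\e}$, $\eta$, and $t \mapsto C_{\e}^2 t^{\e}$ (here using $\e \in (0,1]$, so $t \mapsto t^{\e}$ is a homeomorphism of $[0,\infty)$).

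The core step is then a single chain of inequalities. Fix $x,a,b \in X$ with $a \ne x$ and $b \ne x$, and suppose $d_{\e}(x,a)/d_{\e}(x,b) \le \theta$. Using $d_{\e}(x,a) \ge C_{\e}^{-1}\rho(x,a)^{\e}$ in the numerator and $d_{\e}(x,b) \le C_{\e}\rho(x,b)^{\e}$ in the denominator, I would deduce
\[
C_{\e}^{-2}\biggl(\frac{\rho(x,a)}{\rho(x,b)}\biggr)^{\!\e}
\;\le\; \frac{d_{\e}(x,a)}{d_{\e}(x,b)} \;\le\; \theta,
\]
whence $\rho(x,a)/\rho(x,b) \le (C_{\e}^2\theta)^{1/\e}$. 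The $\eta$-quasisymmetry of $f$ with respect to $\rho$ then yields
\[
\frac{\rho(f(x),f(a))}{\rho(f(x),f(b))} \;\le\; \eta\bigl((C_{\e}^2\theta)^{1/\e}\bigr).
\]

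Finally, to convert the conclusion back to $d_{\e}$, I would use the comparability in the opposite direction: $d_{\e}(f(x),f(a)) \le C_{\e}\rho(f(x),f(a))^{\e}$ in the numerator and $d_{\e}(f(x),f(b)) \ge C_{\e}^{-1}\rho(f(x),f(b))^{\e}$ in the denominator, giving
\[
\frac{d_{\e}(f(x),f(a))}{d_{\e}(f(x),f(b))}
\;\le\; C_{\e}^{2}\biggl(\frac{\rho(f(x),f(a))}{\rho(f(x),f(b))}\biggr)^{\!\e}
\;\le\; C_{\e}^{2}\, \eta\bigl((C_{\e}^2\theta)^{1/\e}\bigr)^{\e}
\;=\; \zeta(\theta).
\]
This is exactly the $\zeta$-quasisymmetry inequality, completing the proof. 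There is no genuine obstacle here: the only thing one must be careful about is invoking the comparability inequalities in the correct direction at each of the two conversion steps (lower bound on the numerator and upper bound on the denominator, or vice versa), and tracking how the exponent $\e$ and constant $C_{\e}$ propagate through the composition to produce precisely the stated $\zeta$.
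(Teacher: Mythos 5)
Your proposal is correct and follows essentially the same route as the paper: convert the $d_{\e}$-ratio to a $\rho$-ratio via comparability, apply the $\eta$-quasisymmetry of $f$ with respect to $\rho$, and convert back, tracking the constants so that $\zeta(\theta) = C_{\e}^2\eta\bigl((C_{\e}^2\theta)^{1/\e}\bigr)^{\e}$ emerges. You are somewhat more explicit than the paper (which states the two implications and says one follows by the same argument as the other), and you add the useful preliminary observations that $\rho$ and $d_{\e}$ induce the same topology and that $\zeta$ is indeed a homeomorphism of $[0,\infty)$.
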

\begin{proof}
We recall the definition of $\eta$-quasisymmetric maps in Definition~\ref{def:quasisym}.
It suffices to show that there exists an increasing homeomorphism~$\zeta: [0,\infty)\rightarrow [0,\infty)$ so that for all~$\theta \geq 0$ and all distinct $x, a, b \in X$ we have that for all~$x \in X$ and~$r > 0$,
\begin{equation}\label{eq1:homeoequi}
  \frac{d_{\e}(x,a)}{d_{\e}(x,b)}  \leq \theta \qquad \Rightarrow \qquad
  \frac{\rho(x,a)}{\rho(x,b)} \leq (\Cc^2\theta)^{1/\e}
\end{equation}
and
\begin{equation}\label{eq2:homeoequi}
  \frac{\rho(f(x),f(a))}{\rho(f(x),f(b))} \leq \eta([\Cc^2\theta]^{1/\e})
   \quad \Rightarrow \quad \frac{d_{\e}(f(x),f(a))}{d_{\e}(f(x),f(b))} \leq \zeta(\theta).
\end{equation}
We start with~\eqref{eq1:homeoequi}.
Fix~$\theta \geq 0$. Suppose~$x,a,b \in X$ are distinct points so that $d_{\e}(x,a)/d_{\e}(x,b) \leq \theta$. Since~$d_{\e} \sim_{C_{\e}} \rho_{\e}$, we have
\[\frac{\rho(x,a)}{\rho(x,b)}
=  \bigg(\frac{\rho_{\e}(x,a)}{\rho_{\e}(x,b)}\bigg)^{1/\e}
\leq \bigg(C_{\e}^2\frac{d_{\e}(x,a)}{d_{\e}(x,b)}\bigg)^{1/\e}
\leq (\Cc^2\theta)^{1/\e}, \text{ as required}.\]

Following the same structure, we find that property~\eqref{eq2:homeoequi} holds with
 $\zeta(\theta) := \Cc^2\eta([\Cc^2\theta]^{1/\e})^{\e}.$

We recall that the composition of homeomorphisms is also a homeomorphism, and the composition of increasing functions is also an increasing function. The function~$\zeta$ is a composition of increasing homeomorphisms. Therefore, $\zeta$ is an increasing homeomorphism from~$[0,\infty)$ onto itself.

Combining~\eqref{eq1:homeoequi}, \eqref{eq2:homeoequi} and the fact that~$f$ is $\eta$-quasisymmetric from~$(X,\rho,\mu)$ onto itself, we see that~$f$ is also $\zeta$-quasisymmetric from~$(X,d_{\e},\mu)$ onto itself.
\end{proof}

\subsection{Existence of~$\widetilde{J}_f$}\label{subsec:Jftilde exist}
\begin{lem}\label{lem:J_f_tilde exist}
  Under the same conditions as in Corollary~\ref{thm.R1.quasimetric}, the generalised Jacobian~$\widetilde{J}_f$ exists and is finite for $\mu$-a.e.~$x \in X$.
\end{lem}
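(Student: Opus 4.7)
The plan is to mirror the proof of Lemma~\ref{lem:J_f_hat exist} but in the quasimetric setting, by applying our Radon--Nikodym Theorem on spaces of homogeneous type (Theorem~\ref{lem:Nages analog}) to the pullback measure $\mu_f$ on $(X,\rho,\mu)$. To do so, I must verify the three hypotheses of Theorem~\ref{lem:Nages analog}: that $(X,\rho,\mu)$ is a space of homogeneous type with $\mu$ Borel regular (given by hypothesis~(a) of Corollary~\ref{thm.R1.quasimetric}), that $\mu_f$ is a Borel regular measure on $(X,\rho,\mu)$, and that $\mu_f$ is absolutely continuous with respect to $\mu$.

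For Borel regularity of $\mu_f$, I would apply Lemma~\ref{lem:m_f Borel regular} directly, since $\mu$ is Borel regular on $(X,\rho,\mu)$ and $f$ is $\eta$-quasisymmetric from $(X,\rho,\mu)$ onto itself. The absolute continuity will be the slightly subtler step. The cleanest route is to transfer it from the metric setting. By Step~1 of the outline of Corollary~\ref{thm.R1.quasimetric} (namely Lemmas~\ref{lem:loccompequi} and~\ref{lem:loccompactequi}), the metric measure space $(X,d_\e,\mu)$ satisfies the hypotheses of Theorem~\ref{thm.R1.metric}, and by Lemma~\ref{lem:homeoequi}, $f$ is $\zeta$-quasisymmetric on $(X,d_\e,\mu)$. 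Hence Lemma~\ref{lem:mu_f abs cts} yields that $\mu_f$ is absolutely continuous w.r.t. $\mu$ when viewed on the metric measure space $(X,d_\e,\mu)$.

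To conclude that absolute continuity also holds on $(X,\rho,\mu)$, I would observe that the Borel $\sigma$-algebras generated by the topologies of $\rho$ and $d_\e$ coincide: the open sets for $\rho_\e$ are the same as those for $\rho$ (since $B_{\rho_\e}(x,r) = \widetilde{B}(x,r^{1/\e})$), and the comparability $d_\e \sim_{C_\e} \rho_\e$ from Proposition~\ref{prop:compare_ball_quasiball} shows that $\rho_\e$ and $d_\e$ generate the same topology, and therefore the same Borel $\sigma$-algebra. Consequently, for every Borel set $E \subset X$, $\mu(E)=0$ implies $\mu_f(E)=0$ regardless of which metric/quasimetric is used to describe the Borel structure.

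With all three hypotheses of Theorem~\ref{lem:Nages analog} verified on $(X,\rho,\mu)$ with $\nu = \mu_f$, the Radon--Nikodym derivative
\[
D(\mu_f,\mu,x) = \lim_{r\to 0^+} \frac{\mu_f(\overline{B}_{\rho}(x,r))}{\mu(\overline{B}_{\rho}(x,r))}
\]
exists and is finite for $\mu$-a.e. $x \in X$. But by definition~\eqref{eq:JacoQuasi_tilde}, this limit is precisely $\widetilde{J}_f(x)$, completing the proof. The main obstacle is really only the transfer of absolute continuity, and this is handled by the topological equivalence of $\rho$ and $d_\e$; no new computations are required beyond the results already established for the metric setting in Section~\ref{sec:R1_mec}.
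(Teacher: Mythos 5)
Your proof is correct and follows essentially the same route as the paper: apply the Radon--Nikodym Theorem on $(X,\rho,\mu)$ (Theorem~\ref{lem:Nages analog}) to $\nu = \mu_f$, using Lemma~\ref{lem:m_f Borel regular} for Borel regularity and absolute continuity of $\mu_f$ w.r.t.~$\mu$, then identify $\widetilde{J}_f$ with the Radon--Nikodym derivative via the closed-quasiball definition~\eqref{eq:JacoQuasi_tilde}. The paper's proof simply asserts absolute continuity whereas you spell out its transfer from the metric setting through the coincidence of the Borel $\sigma$-algebras generated by $\rho$ and $d_\e$; this extra detail is a welcome clarification and does not change the underlying argument.
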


\begin{proof}
  For each~$x \in X$ and~$r >0$, let~$\overline{B}_{\rho}(x,r) := \{y \in X: \rho(x,y) \leq r\}$  denote the closed quasiball centred at~$x$ of radius~$r$ in~$(X,\rho,\mu)$.
  Notice that for all~$x \in X$ the generalised Jacobian~$\widetilde{J}_f$ coincides with the Radon--Nikodym derivative of~$\mu_f$ w.r.t.~$\mu$:
    $$\widetilde{D}(\mu_f,\mu,x):= \lim_{r \rightarrow 0^+ } \frac{\mu_f(\overline{B}_{\rho}(x,r))}{\mu(\overline{B}_{\rho}(x,r))}
    = \widetilde{J}_f(x).$$
  Consequently,~$\widetilde{J}_f(x)=\widetilde{D}(\mu_f,\mu,x)$ for all~$x \in X$.
  Besides this, the absolute continuity of~$\mu_f$ w.r.t.~$\mu$ together with the fact that the measure~$\mu$ and~$\mu_f$ are both Borel regular allow us to use our Radon--Nikodym Theorem (Theorem~\ref{lem:Nages analog}) to conclude that
  $\widetilde{D}(\mu_f,\mu,x)$
  exists  and is finite for $\mu$-a.e.~$x \in X$. Hence~$\widetilde{J}_f(x)$ exists and is finite for $\mu$-a.e.~$x \in X$.
\end{proof}

\subsection{$\widehat{J}_f$ and $\widetilde{J}_f$ are comparable}\label{subsec: Jacoequi}
\begin{lem}\label{lem:Jacoequi}
Under the same conditions as in Corollary~\ref{thm.R1.quasimetric},
the generalised Jacobians $\widehat{J}_f$ and~$\widetilde{J}_f$ are comparable with a constant depending on~$A_1, C_{\mu_f}, \Cc$ and~$\e$.
\end{lem}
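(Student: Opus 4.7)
The plan is to sandwich the defining ratio for $\widetilde{J}_f(x)$ between two expressions whose limits as $r\to 0^+$ are both constant multiples of $\widehat{J}_f(x)$. The two key ingredients are the comparability of closed balls given by~\eqref{eq:Hei6a}, namely $\overline{B}_{d_\e}(x,\Cc^{-1}r^\e)\subset \overline{B}_\rho(x,r)\subset \overline{B}_{d_\e}(x,\Cc r^\e)$, together with the doubling of $\mu$ and of $\mu_f$.

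First I would record a doubling statement for $d_\e$-balls. Since $\widetilde{X}$ is $\al$-regular, Proposition~\ref{prop:nonempty_annuli} yields nonempty $\tau$-annuli, so Lemma~\ref{lem:nu_dbl_2} applies and gives doubling of $\mu_f$ on quasiballs with constant $C_{\mu_f}$. Passing from open to closed quasiballs is routine, since $\widetilde{B}(x,r)\subset\overline{B}_\rho(x,r)\subset\widetilde{B}(x,2r)$. Transferring doubling from closed quasiballs to closed $d_\e$-balls then uses a two-sided application of~\eqref{eq:Hei6}: every $d_\e$-ball is trapped between two concentric quasiballs whose radii have a fixed ratio depending only on $\Cc$ and $\e$. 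This produces constants $K_\mu=K_\mu(A_1,\Cc,\e)$ and $K_{\mu_f}=K_{\mu_f}(C_{\mu_f},\Cc,\e)$ such that for all $x\in X$ and $r>0$,
\begin{equation*}
\mu(\overline{B}_{d_\e}(x,\Cc r^\e))\leq K_\mu\,\mu(\overline{B}_{d_\e}(x,\Cc^{-1}r^\e)),
\end{equation*}
and the analogous inequality for $\mu_f$ with constant $K_{\mu_f}$.

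Second, I would fix $x$ in the set of full $\mu$-measure where both $\widehat{J}_f(x)$ (from Theorem~\ref{thm.R1.metric} applied to $\widehat{X}$, as in Step~3 of the outline) and $\widetilde{J}_f(x)$ (from Lemma~\ref{lem:J_f_tilde exist}) exist and are finite. Using~\eqref{eq:Hei6a} I sandwich
\begin{equation*}
\frac{\mu_f(\overline{B}_{d_\e}(x,\Cc^{-1}r^\e))}{\mu(\overline{B}_{d_\e}(x,\Cc r^\e))}
\leq \frac{\mu_f(\overline{B}_\rho(x,r))}{\mu(\overline{B}_\rho(x,r))}
\leq \frac{\mu_f(\overline{B}_{d_\e}(x,\Cc r^\e))}{\mu(\overline{B}_{d_\e}(x,\Cc^{-1}r^\e))}.
\end{equation*}
Then I factor each outer expression so that numerator and denominator share a common radius, for example
\begin{equation*}
\frac{\mu_f(\overline{B}_{d_\e}(x,\Cc r^\e))}{\mu(\overline{B}_{d_\e}(x,\Cc^{-1}r^\e))}
= \frac{\mu_f(\overline{B}_{d_\e}(x,\Cc r^\e))}{\mu(\overline{B}_{d_\e}(x,\Cc r^\e))}\cdot \frac{\mu(\overline{B}_{d_\e}(x,\Cc r^\e))}{\mu(\overline{B}_{d_\e}(x,\Cc^{-1}r^\e))},
\end{equation*}
where the second factor is at most $K_\mu$ by the doubling estimate from Step~1 and the first factor tends to $\widehat{J}_f(x)$ as $r\to 0^+$, since $\Cc r^\e\to 0^+$. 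A symmetric factoring of the lower bound using $K_{\mu_f}$ in place of $K_\mu$ handles the reverse direction. Sending $r\to 0^+$ throughout yields
\begin{equation*}
K_{\mu_f}^{-1}\widehat{J}_f(x)\leq \widetilde{J}_f(x)\leq K_\mu\,\widehat{J}_f(x),
\end{equation*}
which is the desired comparability with a constant depending only on $A_1$, $C_{\mu_f}$, $\Cc$ and $\e$.

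The main obstacle is the bookkeeping needed to transfer the doubling constants from quasiballs to closed $d_\e$-balls (including the open-to-closed reduction); once these doubling inequalities on $d_\e$-balls are in hand, the proof is the two-line sandwich followed by a factoring that isolates a ratio with equal radii, which is precisely what defines $\widehat{J}_f(x)$ in the limit.
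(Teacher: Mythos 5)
Your proposal is correct and follows essentially the same route as the paper: both proofs rest on the closed-ball comparability \eqref{eq:Hei6a} from Proposition~\ref{prop:compare_ball_quasiball} and the doubling of $\mu$ and $\mu_f$ (the latter via Lemma~\ref{lem:nu_dbl_2}, justified by $\al$-regularity through Proposition~\ref{prop:nonempty_annuli}), which together absorb the radius mismatch into a constant depending only on $A_1$, $C_{\mu_f}$, $\Cc$ and $\e$. The only cosmetic differences are that you sandwich $\widetilde{J}_f$ by $\widehat{J}_f$ rather than the reverse, and you transfer the doubling constants to closed $d_{\e}$-balls whereas the paper pushes everything over to open quasiballs before invoking doubling.
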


\begin{proof}
Recall that~$C_{\mu_f}$ is the doubling constant of the measure~$\mu_f$.
We have shown in Lemmas~\ref{lem:J_f_hat exist} and~\ref{lem:J_f_tilde exist} that $\widehat{J}_f(x)$ and $\widetilde{J}_f(x)$ exist for $\mu$-a.e.~$x \in X$.
Take~$x \in X$ such that $\widehat{J}_f(x)$ and $\widetilde{J}_f(x)$ both exist.
Using Proposition~\ref{prop:compare_ball_quasiball} and the doubling properties of~$\mu$ and~$\mu_f$ we have
\begin{eqnarray*}
  \widehat{J}_f(x) &=& \lim_{r \rightarrow 0} \frac{\mu_f(\overline{B}_{d_{\e}}(x, r)))}{\mu(\overline{B}_{d_{\e}}(x,r)} \\
   &\leq&  \lim_{r \rightarrow 0}\frac{\mu_f(\overline{B}_{\rho}(x,\Cc^{1/\e}r^{1/\e}))}
   {\mu(\overline{B}_{\rho}(x,\Cc^{-1/\e}r^{1/\e}))} \\
    &\leq&  \lim_{r \rightarrow 0}\frac{\mu_f(\widetilde{B}(x,2\Cc^{1/\e}r^{1/\e}))}
   {\mu(\widetilde{B}(x,\Cc^{-1/\e}r^{1/\e}))} \\
    &\leq& \frac{C_{\mu_f}^{2+\log_2 \Cc^{1/\e}}}{A_1^{-(\log_2\Cc^{-1/\e})}}
   \lim_{r \rightarrow 0} \frac{\mu_f(\widetilde{B}(x,r^{1/\e}))}
   {\mu(\widetilde{B}(x,2r^{1/\e}))} \\
   &=& \frac{C_{\mu_f}^{2+\log_2 \Cc^{1/\e}}}{A_1^{-(\log_2\Cc^{-1/\e})}} \lim_{t \rightarrow 0}\frac{\mu_f(\widetilde{B}(x,t))}{\mu(\widetilde{B}(x,2t))} \qquad \text{where } t:=r^{1/\e}\\
   &=& \frac{C_{\mu_f}^{2+\log_2 \Cc^{1/\e}}}{A_1^{-(\log_2\Cc^{-1/\e})}} \lim_{t \rightarrow 0}\frac{\mu_f(\overline{B}_{\rho}(x,t))}{\mu(\overline{B}_{\rho}(x,t))}\\
   &=& \frac{C_{\mu_f}^{2+\log_2 \Cc^{1/\e}}}{A_1^{-(\log_2\Cc^{-1/\e})}} \widetilde{J}_f(x) = C(A_1, C_{\mu_f}, C_{\e},\e) \widetilde{J}_f(x).
\end{eqnarray*}

Similarly we have
\begin{equation*}
  \widetilde{J}_f(x)
   = \lim_{r \rightarrow 0}\frac{\mu_f(\overline{B}_{\rho}(x,r))}{\mu(\overline{B}_{\rho}(x,r)}
   \leq \frac{C_{\mu_f}^{2+\log_2 \Cc}}{A_1^{-(\log_2\Cc)}} \widehat{J}_f(x) = C(A_1, C_{\mu_f}, C_{\e},\e) \widehat{J}_f(x). \qedhere
\end{equation*}\qedhere
\end{proof}

Next, we move to Step~6 of the proof of Corollary~\ref{thm.R1.quasimetric}.
\subsection{$\log  \widetilde{J}_f \in \bmo(X,d_{\e},\mu)$}\label{subsec:logJacoequi}
In this section, we will show that under the same conditions as in Corollary~\ref{thm.R1.quasimetric}, $\log \widetilde{J}_f \in \bmo(\widehat{X})$.
We will use the result of the following Proposition.
\begin{prop}\label{prop:bmo equi}
Suppose~$(X,\rho,\mu)$ is a space of homogeneous type.
Let~$g: X \rightarrow \R$ and~$h: X \rightarrow \R$ be positive locally integrable functions.
If~$g \in \bmo(X,\rho,\mu)$ and there exists a constant~$C$ such that
  $|g(x) -  h(x)| \leq C$
  for $\mu$-a.e.~$x \in X$,
  then $h \in \bmo(X,\rho,\mu)$.
\end{prop}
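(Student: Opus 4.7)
The plan is to show directly that the $\bmo$ seminorm of $h$ is controlled by that of $g$ plus a multiple of the constant $C$, using only the triangle inequality, linearity of the averaging operator, and the hypothesis that $|g-h|\le C$ $\mu$-a.e. Since we are on a space of homogeneous type and $\bmo(X,\rho,\mu)$ is defined by taking the supremum over quasiballs, it suffices to obtain a uniform bound on the mean oscillation of $h$ over an arbitrary quasiball $\widetilde{B}$.

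First I would fix a quasiball $\widetilde{B} \subset X$. Because $g,h$ are locally integrable and $|g-h|\le C$ $\mu$-a.e., both averages
\[
g_{\widetilde{B}} := \frac{1}{\mu(\widetilde{B})}\int_{\widetilde{B}} g\,d\mu,\qquad h_{\widetilde{B}} := \frac{1}{\mu(\widetilde{B})}\int_{\widetilde{B}} h\,d\mu
\]
are well defined and finite, and by linearity of the integral,
\[
|h_{\widetilde{B}} - g_{\widetilde{B}}| = \Bigl|\frac{1}{\mu(\widetilde{B})}\int_{\widetilde{B}}(h-g)\,d\mu\Bigr| \le C.
\]

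Next, for $\mu$-a.e.\ $x\in\widetilde{B}$, I would apply the triangle inequality twice to write
\[
|h(x) - h_{\widetilde{B}}| \le |h(x) - g(x)| + |g(x) - g_{\widetilde{B}}| + |g_{\widetilde{B}} - h_{\widetilde{B}}| \le 2C + |g(x) - g_{\widetilde{B}}|.
\]
Averaging this inequality over $\widetilde{B}$ with respect to $\mu$ and then taking the supremum over all quasiballs $\widetilde{B}\subset X$ yields
\[
\|h\|_{\bmo(X,\rho,\mu)} \le 2C + \|g\|_{\bmo(X,\rho,\mu)} < \infty,
\]
so $h\in\bmo(X,\rho,\mu)$, as required.

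There is no real obstacle here: the argument is a direct computation that is identical in form to the classical Euclidean proof, and it uses nothing about $(X,\rho,\mu)$ beyond local integrability and the existence of finite-measure quasiballs, both of which are built into the definition of a space of homogeneous type. The only point worth flagging is that the averaged estimate $|h_{\widetilde{B}} - g_{\widetilde{B}}|\le C$ relies on the pointwise bound holding $\mu$-a.e., which is exactly our hypothesis; no Borel regularity or doubling of $\mu$ is needed for the argument itself.
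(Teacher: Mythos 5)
Your proof is correct and follows essentially the same route as the paper's: the same triangle-inequality splitting of $|h(x)-h_{\widetilde{B}}|$ into three terms, the same bound $|g_{\widetilde{B}}-h_{\widetilde{B}}|\le C$ from averaging the pointwise hypothesis, and the same conclusion $\|h\|_{\bmo}\le 2C+\|g\|_{\bmo}$.
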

\begin{proof}
  Fix a quasiball~$\widetilde{B} \subset X$. For each~$x \in \widetilde{B}$, consider
  \begin{equation}\label{eq1:bmo equi}
    |h(x) - h_{\widetilde{B}}| \leq |h(x) - g(x)| + |g(x) - g_{\widetilde{B}}| + |g_{\widetilde{B}} - h_{\widetilde{B}}|.
  \end{equation}
  The first and third terms on the right-hand side of~\eqref{eq1:bmo equi} are bounded above by~$C$:
  \[|h(x) - g(x)| \leq C, \quad \text{and}\]
  \begin{equation*}
    |g_{\widetilde{B}} - h_{\widetilde{B}}|
     = \bigg|\intav_{\widetilde{B}} g(x) - h(x) \,d\mu\bigg|
     \leq \intav_{\widetilde{B}} |g(x) - h(x)| \,d\mu  \leq C.
  \end{equation*}
  Hence~$\eqref{eq1:bmo equi}$ yields
  $|h(x) - h_{\widetilde{B}}| \leq 2C + |g(x) - g_{\widetilde{B}}|.$
  Taking the average over~$\widetilde{B}$ on both sides we get
  \[\intav_{\widetilde{B}} |h(x) - h_{\widetilde{B}}|\,d\mu \leq 2C + \intav_{\widetilde{B}} |g(x) - g_{\widetilde{B}}| \,dx
  \leq 2C + \|g\|_{\bmo(X,\rho,\mu)}.\]
  Since this is true for all quasiballs~$\widetilde{B} \subset X$,
  the function $h$ is in $\bmo(X,\rho,\mu)$ with $\|h\|_{\bmo} \leq 2C + \|g\|_{\bmo}$.
\end{proof}

Under the same conditions as in Corollary~\ref{thm.R1.quasimetric},
by passing from the quasimetric~$\rho$ to the metric~$d_{\e}$
we obtain a metric measure space~$\widehat{X} = (X,d_{\e},\mu)$ that satisfies the conditions of Theorem~\ref{thm.R1.metric}. This implies that the generalised Jacobian~$\widehat{J}_f$ exists and is finite, and more importantly, $\log \widehat{J}_f \in \bmo(\widehat{X})$.

From Lemma~\ref{lem:Jacoequi} we know that there exists a constant~$C$ such that $C^{-1}\widehat{J}_f \leq \widetilde{J}_f \leq C\widehat{J}_f$,
where $C = C(A_1, C_{\mu_f}, C_{\e},\e)$.
Then we have
\[\log  \widehat{J}_f  - C \leq \log  \widetilde{J}_f  \leq \log  \widehat{J}_f  + C \quad \mu\text{-a.e.},\]
which is equivalent to $|\log \widehat{J}_f - \log \widetilde{J}_f| \leq C$ $\mu$-a.e.
Applying Proposition~\ref{prop:bmo equi} for~$g(x) = \log  \widehat{J}_f(x)$ and~$h(x) = \log  \widetilde{J}_f(x)$, we conclude that~$\log  \widetilde{J}_f \in \bmo(\widehat{X})$ with~$\|\log  \widetilde{J}_f \|_{\bmo(\widehat{X})} \leq 2C + \|\widehat{J}_f\|_{\bmo(\widehat{X})}$.

\subsection{$\bmo(X,\rho,\mu)$ and $\bmo(X,d_{\e},\mu)$ coincide}\label{subsec:BMO_coincide}
In this section, we show that $\bmo(X,\rho,\mu) = \bmo(X,d_{\e},\mu)$, where~$(X,\rho,\mu)$ is a space of homogeneous type, and~$d_{\e}$ is a  metric which is comparable to the snowflaking~$\rho_{\e}$ of the quasimetric~$\rho$.
See Section~\ref{subsec:BMO_X} for the definitions of these~$\bmo$ spaces.

\begin{prop}\label{prop:BMO_coincide}
  Let~$(X,\rho,\mu)$ be a space of homogeneous type.
  Given~$\e \in (0,1]$, let~$\rho_{\e}(x,y) := \rho(x,y)^{\e}$ for all~$x,y \in X$.
  Let~$d_{\e}$ be a metric which is comparable to~$\rho_{\e}$ with constant~$C_{\e} \geq 1$.
  Then $\bmo(X,\rho,\mu) = \bmo(X,d_{\e},\mu)$, with comparable norms.
\end{prop}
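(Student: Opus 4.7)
The plan is to exploit the ball-vs-quasiball sandwiching from Proposition~\ref{prop:compare_ball_quasiball} together with the doubling of $\mu$ on quasiballs, and then feed the resulting measure-comparability into the standard two-constant BMO comparison. The entire argument is symmetric in $\rho$ and $d_{\e}$, so I would prove one inclusion and remark that the other is identical.

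First I would rewrite Proposition~\ref{prop:compare_ball_quasiball} in the form most useful here. Setting $r = (\Cc s)^{1/\e}$ and $r = (s/\Cc)^{1/\e}$ respectively in \eqref{eq:Hei6} gives the chain of inclusions
\[
\widehat{B}(x, s/\Cc^{2})
\subset \widetilde{B}\!\left(x, (s/\Cc)^{1/\e}\right)
\subset \widehat{B}(x, s)
\subset \widetilde{B}\!\left(x, (\Cc s)^{1/\e}\right)
\subset \widehat{B}(x, \Cc^{2} s).
\]
The two quasiballs at the middle have radii differing by the factor $\Cc^{2/\e}$, so by the doubling hypothesis on~$\mu$ (equation~\eqref{eq:dbl_measure_2}) their measures are comparable with constant $A_{1}^{1+(2/\e)\log_{2}\Cc}$. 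Because every set in the chain is trapped between these two quasiballs, all five measures are comparable. In particular there is a constant $C_{0}=C_{0}(A_{1},\Cc,\e)$ such that for every $x\in X$ and $s,r>0$,
\[
\mu\!\left(\widetilde{B}\!\left(x,(\Cc s)^{1/\e}\right)\right)\le C_{0}\,\mu(\widehat{B}(x,s))
\quad\text{and}\quad
\mu(\widehat{B}(x,\Cc r^{\e}))\le C_{0}\,\mu(\widetilde{B}(x,r)).
\]

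Next I would run the classical BMO comparison. Fix $f\in\bmo(X,\rho,\mu)$ and a metric ball $\widehat{B}=\widehat{B}(x,s)$, and let $\widetilde{B}':=\widetilde{B}(x,(\Cc s)^{1/\e})$, which contains~$\widehat{B}$ and satisfies $\mu(\widetilde{B}')\le C_{0}\mu(\widehat{B})$. Choosing $c=f_{\widetilde{B}'}$,
\[
\intav_{\widehat{B}}|f-c|\,d\mu
\le \frac{\mu(\widetilde{B}')}{\mu(\widehat{B})}\intav_{\widetilde{B}'}|f-c|\,d\mu
\le C_{0}\,\|f\|_{\bmo(X,\rho,\mu)}.
\]
Combining this with the elementary inequality $\intav_{\widehat{B}}|f-f_{\widehat{B}}|\,d\mu\le 2\intav_{\widehat{B}}|f-c|\,d\mu$, valid for every constant~$c$, and taking the supremum over all metric balls yields $\|f\|_{\bmo(X,d_{\e},\mu)}\le 2C_{0}\,\|f\|_{\bmo(X,\rho,\mu)}$. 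This also confirms that $\mu$ is doubling with respect to metric balls (it is doubling on a cofinal family, which suffices), so $(X,d_{\e},\mu)$ is indeed a metric measure space in the sense of this paper.

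The reverse inclusion is obtained by the mirror-image argument: for a quasiball $\widetilde{B}=\widetilde{B}(x,r)$, use instead $\widehat{B}':=\widehat{B}(x,\Cc r^{\e})\supset\widetilde{B}$ with $\mu(\widehat{B}')\le C_{0}\,\mu(\widetilde{B})$, and repeat the same two-constant BMO estimate to get $\|f\|_{\bmo(X,\rho,\mu)}\le 2C_{0}\,\|f\|_{\bmo(X,d_{\e},\mu)}$. No step is a real obstacle; the only thing to track carefully is the bookkeeping of the exponents $1/\e$ and $\e$ coming from the snowflaking relation $d_{\e}\sim\rho_{\e}$, which is precisely where the constant $C_{0}$ acquires its dependence on $\e$ and $\Cc$.
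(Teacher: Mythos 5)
Your proposal is correct and follows essentially the same route as the paper's own proof: sandwich balls between quasiballs via Proposition~\ref{prop:compare_ball_quasiball}, invoke doubling of $\mu$ to compare the measures, and then run the standard BMO oscillation comparison on the comparable pair (the paper splits $\intav_{\widetilde{B}}|\varphi-\varphi_{\widetilde{B}}|$ into two terms via $\varphi_{\widehat{B}}$, which is algebraically the same as your $\intav_B|f-f_B|\le 2\intav_B|f-c|$ trick). The only cosmetic differences are that you expand in the quasimetric, using $\rho$-doubling, while the paper's display~\eqref{eq2:BMO_coincide} expands in the metric $d_{\e}$ and implicitly uses $d_{\e}$-doubling — a point you are slightly more careful about by noting explicitly that $\mu$ is doubling for $d_{\e}$-balls.
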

\begin{proof}
  Let $\widetilde{X} := (X,\rho,\mu)$ and $\widehat{X} := (X,d_{\e},\mu)$.
  It is sufficient to show that there exist constants~$C>0$ and~$C'>0$ depending on~$\e$ such that for every~$\varphi \in L^1_{\text{loc}}(\widetilde{X})$, there holds
  \[
  C\|\varphi\|_{\bmo(\widehat{X})}
  \leq \|\varphi\|_{\bmo(\widetilde{X})} \leq
  C'\|\varphi\|_{\bmo(\widehat{X})}.
  \]
  Fix an~$x \in X$ and~$r>0$. Let~$\widetilde{B} := \widetilde{B}(x,r)$, and~$\widehat{B} := \widehat{B}(x,C_{\e}r^{\e})$.
  Then
  \begin{equation}\label{eq1:BMO_coincide}
    \intav_{\widetilde{B}} |\varphi(x) - \varphi_{\widetilde{B}}| \,d\mu
    \leq \intav_{\widetilde{B}} |\varphi(x) - \varphi_{\widehat{B}}| \,d\mu
    + \intav_{\widetilde{B}} |\varphi_{\widehat{B}} - \varphi_{\widetilde{B}}| \,d\mu.
  \end{equation}
  We consider each integral on the right-hand side of~\eqref{eq1:BMO_coincide}.
  Using the nestedness property~\eqref{eq:Hei6} and the doubling property~\eqref{eq:dbl_measure_2} of~$\mu$ we have
  \begin{equation}\label{eq2:BMO_coincide}
   \frac{1}{\widetilde{B}} \int_{\widetilde{B}} |\varphi(x) - \varphi_{\widehat{B}}| \,d\mu
    \leq \frac{A_1^{1 + \log_2 C_{\e}^{2}}}{\mu(\widehat{B})} \int_{\widehat{B}}  |\varphi(x) - \varphi_{\widehat{B}}| \,d\mu
    \leq A_1^{1 + \log_2 C_{\e}^{2}} \|\varphi\|_{\bmo(\widehat{X})}.
  \end{equation}
  Moreover, using~\eqref{eq2:BMO_coincide} we obtain
  \[
  |\varphi_{\widehat{B}} - \varphi_{\widetilde{B}}|
  \leq \intav_{\widetilde{B}} |\varphi(x) - \varphi_{\widehat{B}}| \,d\mu
  \leq A_1^{1 + \log_2 C_{\e}^{2}} \|\varphi\|_{\bmo(\widehat{X})},
  \]
  and so
  \begin{equation}\label{eq5:BMO_coincide}
  \intav_{\widetilde{B}} |\varphi_{\widehat{B}} - \varphi_{\widetilde{B}}| \,d\mu
  \leq A_1^{1 + \log_2 C_{\e}^{2}} \|\varphi\|_{\bmo(\widehat{X})}.
  \end{equation}
  Taking the supremum over all quasiballs~$\widetilde{B} \subset \widetilde{X}$ of~\eqref{eq1:BMO_coincide}, and using~\eqref{eq2:BMO_coincide} and~\eqref{eq5:BMO_coincide} we obtain
    \begin{equation}\label{eq3:BMO_coincide}
     \|\varphi\|_{\bmo(\widetilde{X})}
     = \sup_{\widetilde{B}}  \intav_{\widetilde{B}} |\varphi(x) - \varphi_{\widetilde{B}}| \,d\mu
    \leq 2 A_1^{1 + \log_2 C_{\e}^{2}} \|\varphi\|_{\bmo(\widehat{X})}.
  \end{equation}

  Following the same argument, we have
  \begin{equation}\label{eq4:BMO_coincide}
     \|\varphi\|_{\bmo(\widetilde{X})}
    \geq 2 A_1^{-(1 + \log_2 C_{\e}^{2})} \|\varphi\|_{\bmo(\widehat{X})}.
  \end{equation}
  Combining~\eqref{eq3:BMO_coincide} and~$\eqref{eq4:BMO_coincide}$, Proposition~\ref{prop:BMO_coincide} is established.
\end{proof}

 Using Proposition~\ref{prop:BMO_coincide}
 we conclude that $\widetilde{J}_f \in \bmo({\widetilde{X}})$,
 completing the proof of Corollary~\ref{thm.R1.quasimetric}.
\section{Construction of Spaces~$(X,\rho,\mu)$ to which our Result Applies}\label{sec:construction_X}
In this section, we construct a large class of spaces of homogeneous type to which Corollary~\ref{thm.R1.quasimetric} applies.
The idea is that we start with any metric measure space~$(X,D,\mu)$ satisfying the conditions of Theorem~$\ref{thm.R1.metric}$.
From there, we can always build a class of spaces of homogeneous type~$(X,\rho,\mu)$ such that the hypotheses of Corollary~\ref{thm.R1.quasimetric} hold.
The detail of the construction is shown in Example~\ref{thm:construct}. The four main steps of its proof are outlined in the Introduction.
We are left to prove Lemma~\ref{lem:construct1} from Step~2, and to complete Step~4.
Recall that given a metric measure space~$(X,D,\mu)$ as in Example~\ref{thm:construct}, we fix~$\beta \geq 1$, and define~$\rho (x,y) := D(x,y)^{\beta}$ for all~$x, y \in X$.
In Lemma~\ref{lem:construct1}, we show that~$\rho$ is in fact a quasimetric.

\begin{lem}\label{lem:construct1}
Let~$D$ be a metric on a set~$X$. Suppose~$\beta \geq 1$. Define~$\rho (x,y) := D(x,y)^{\beta}$ for all~$x, y \in X$. Then~$\rho$ is a quasimetric on~$X$, with quasitriangle constant~$A_0 = 2^{\beta -1}$.
\end{lem}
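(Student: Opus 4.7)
The plan is to verify directly the three defining properties of a quasimetric from Section~\ref{subsec:spaceX}. Non-negativity, the identity-of-indiscernibles axiom, and symmetry are all immediately inherited from the corresponding properties of the metric $D$, together with the fact that $t \mapsto t^\beta$ is a strictly increasing map on $[0,\infty)$ fixing $0$. So the only content is the quasitriangle inequality with the claimed constant $A_0 = 2^{\beta-1}$.

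For the quasitriangle inequality, the idea is to combine the ordinary triangle inequality for $D$ with the elementary convexity inequality
\[
(a+b)^{\beta} \leq 2^{\beta-1}\bigl(a^{\beta} + b^{\beta}\bigr), \qquad a,b \geq 0,\ \beta \geq 1,
\]
which follows from the convexity of $t \mapsto t^{\beta}$ on $[0,\infty)$ via the midpoint estimate $\bigl(\tfrac{a+b}{2}\bigr)^{\beta} \leq \tfrac{a^{\beta}+b^{\beta}}{2}$. Applying this with $a = D(x,y)$ and $b = D(y,z)$, together with the triangle inequality $D(x,z) \leq D(x,y) + D(y,z)$ and the monotonicity of $t \mapsto t^{\beta}$, gives
\[
\rho(x,z) = D(x,z)^{\beta} \leq \bigl(D(x,y) + D(y,z)\bigr)^{\beta} \leq 2^{\beta-1}\bigl(D(x,y)^{\beta} + D(y,z)^{\beta}\bigr) = 2^{\beta-1}\bigl(\rho(x,y) + \rho(y,z)\bigr),
\]
which is the quasitriangle inequality with $A_0 = 2^{\beta-1}$.

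There is no real obstacle here; the whole argument is a one-line application of convexity once the triangle inequality for $D$ has been invoked. The only small thing worth noting is that when $\beta = 1$ we recover $A_0 = 1$ and $\rho = D$ is already a metric, so the statement degenerates consistently with the metric case.
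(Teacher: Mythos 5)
Your proof is correct and takes essentially the same route as the paper: verify the first two quasimetric axioms by direct inheritance from $D$, then deduce the quasitriangle inequality from the ordinary triangle inequality for $D$ combined with the convexity estimate $(a+b)^{\beta} \leq 2^{\beta-1}(a^{\beta}+b^{\beta})$.
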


\begin{proof}
Take~$x, y, z \in X$.
  First,
  $\rho(x,y) = 0 \Leftrightarrow D(x,y)^{\beta} = 0 \Leftrightarrow D(x,y) = 0 \Leftrightarrow x = y$.
  Second,
  $\rho(x,y) = D(x,y)^{\beta} = D(y,x)^{\beta} = \rho(y,x)$.
  Finally, we prove the quasitriangle inequality of~$\rho$:
  \begin{eqnarray*}
    \rho(x,y) &=& D(x,y)^{\beta}
     \leq  (D(x,z) + D(z,y))^{\beta} \\
     &\leq&  2^{\beta-1}(D(x,z)^{\beta} + D(z,y)^{\beta})
     = 2^{\beta-1}(\rho(x,z)+ \rho(z,y)).
  \end{eqnarray*}
  So~$\rho$ is a quasimetric, with quasitriangle constant~$A_0 = 2^{\beta-1}$.
\end{proof}

Recall that in Step~3, we fix~$\e = 1/\beta$, and define the metric~$d_{\e}$ from~$\rho$ by the $\e$-chain approach.
Up to now, we have constructed a space of homogeneous type~$(X,\rho,\mu)$ and a metric measure space~$(X,d_{\e},\mu)$.
In Step~4, we claim that the constructed space of homogeneous type~$(X,\rho,\mu)$ and metric measure space~$(X,d_{\e},\mu)$ satisfy the hypotheses of Corollary~\ref{thm.R1.quasimetric}.
To see this, we will show that the metric~$d_{\e}$ coincides with the metric~$D$ in Lemma~\ref{lem:construct2} below. In other words, the metric measure space~$(X,d_{\e},\mu)$ is actually the space~$(X,D,\mu)$ that we started with.

Therefore,  hypotheses~(a) and (e)--(g) of Corollary~\ref{thm.R1.quasimetric} come directly from the assumptions of Example~\ref{thm:construct}.
Also, notice that $(2A_0)^{\e} = (2 \cdot 2^{\beta-1})^{\e} = (2^{\beta})^{1/\beta} = 2$. Thus, by Theorem~\ref{prop:chain_apporoach}, $d_{\e}$~is comparable to~$\rho$.
Then by Lemma~\ref{lem:loccompequi} and~\ref{lem:loccompactequi}, hypotheses~(b) and~(c) hold.

It remains to verify that hypothesis~(d) holds.
For each~$x \in X$, $r>0$ and~$\beta >1$, set
$$\widetilde{B}(x,r) := \{y \in X: D(x,y)^{\beta} <r\} = \{y \in X: D(x,y) <r^{1/\beta}\} = \widehat{B}(x,r^{1/\beta}).$$ Then by assumption~(vi) of Example~\ref{thm:construct}, $\mu(\partial \widetilde{B}(x,r)) = \mu(\widehat{B}(x,r^{1/\beta}))= 0 $, as required. 

To complete the proof of Example~\ref{thm:construct}, our final task is to prove Lemma~\ref{lem:construct2}.
\begin{lem}\label{lem:construct2}
Let~$D$ be a metric on a set~$X$. Suppose~$\beta > 1$. Define~$\rho (x,y) := D(x,y)^{\beta}$ for all~$x, y \in X$. As shown in Lemma~\ref{lem:construct1}, $\rho$ is a quasimetric on~$X$.
Fix~$\e = 1/\beta$.
Let~$d_{\e}$ be the metric defined from~$\rho$ by the $\e$-chain approach.
Then $d_{\e}$ coincides with the original metric~$D$.
\end{lem}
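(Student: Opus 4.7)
The plan is to reduce $d_{\e}$ to the usual length-of-chain infimum for the metric $D$, and then use the triangle inequality for $D$ to recognize this infimum as $D$ itself. The key preliminary observation is purely algebraic: since $\rho(x,y) = D(x,y)^{\beta}$ and $\e = 1/\beta$, we have
\[
\rho_{\e}(x,y) \;=\; \rho(x,y)^{\e} \;=\; D(x,y)^{\beta \cdot (1/\beta)} \;=\; D(x,y)
\]
for every $x,y \in X$. Substituting into the $\e$-chain formula~\eqref{eq:pchain} therefore gives
\[
d_{\e}(x,y) \;=\; \inf \bigg\{ \sum_{i} D(x_i,x_{i+1}) : x = x_0,x_1,\ldots,x_n = y\bigg\}.
\]

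Next, I would establish the two inequalities $d_{\e}(x,y) \le D(x,y)$ and $d_{\e}(x,y) \ge D(x,y)$. For the upper bound, take the trivial chain consisting of the two endpoints $x_0 = x$, $x_1 = y$; this chain is admissible and contributes precisely $D(x,y)$ to the infimum, so $d_{\e}(x,y) \le D(x,y)$. For the lower bound, given any chain $x = x_0, x_1, \ldots, x_n = y$, iterating the triangle inequality for the metric $D$ yields
\[
D(x,y) \;=\; D(x_0,x_n) \;\le\; \sum_{i=0}^{n-1} D(x_i,x_{i+1}).
\]
Taking the infimum over all such chains gives $D(x,y) \le d_{\e}(x,y)$.

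Combining the two bounds yields $d_{\e}(x,y) = D(x,y)$ for all $x,y \in X$, which is the claim of Lemma~\ref{lem:construct2}. The proof is essentially immediate once the identity $\rho_{\e} = D$ is in hand, so there is no real obstacle here; the only point worth flagging is that this identity depends crucially on the precise matching $\e = 1/\beta$, which is exactly the choice fixed in Step~3 of the construction (and is consistent with the choice $(2A_0)^{\e} = 2$ from Theorem~\ref{prop:chain_apporoach}, since $A_0 = 2^{\beta - 1}$ by Lemma~\ref{lem:construct1}).
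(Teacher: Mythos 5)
Your proof is correct and follows essentially the same route as the paper's: establish $\rho_{\e} = D$ from the exponent cancellation, get the upper bound from the trivial two-point chain, and get the lower bound by iterating the triangle inequality for $D$ over any chain. The only cosmetic difference is that you place the identity $\rho_{\e} = D$ up front while the paper invokes it midway, which arguably makes your version slightly cleaner.
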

\begin{proof}
We recall the definition of the metric~$d_{\e}$ built via the~$\e$-chain approach:
\[d_{\e}(x,y) = \inf \sum_{i=0}^{n}\rho_{\e}(x_i,x_{i+1}),\]
where the infimum is taken over all finite sequences~$x = x_0,x_1, \ldots, x_n = y$ of points in~$X$.
Using $n =1$, $x_0 = x$ and~$x_1 = y$, this gives us that
\begin{equation}\label{eq1:construct2}
d_{\e}(x,y) \leq D(x_0,x_1) = D(x,y).
\end{equation}
Also, using the triangle inequality for the metric~$D$, for all sequences of points~$x = x_0, x_1, \ldots, x_{k+1} = y$ we get
\begin{eqnarray*}
  D(x,y) = D(x_0, x_{k+1}) &\leq& D(x_0, x_1) + D(x_1, x_{k+1}) \\
   &\leq& D(x_0, x_1) + D(x_1,x_2) + D(x_2, x_{k+1}) \\
   &\vdots& \\
   &\leq& \sum_{i=0}^{k}D(x_i,x_{i+1}).
\end{eqnarray*}
Since~$\e = 1/\beta$, for all~$x, y \in X$ we have
$\rho_{\e}(x,y) = \rho(x,y)^{\e} = \rho(x,y)^{1/\beta} = D(x,y).$
Thus,
\[d_{\e}(x,y) = \inf \sum_{i=0}^{n}\rho_{\e}(x_i,x_{i+1}) = \inf \sum_{i=0}^{n}D(x_i,x_{i+1}).\]
Hence,
\begin{equation}\label{eq2:construct2}
  D(x,y) \leq \inf \sum_{i=0}^{n}\rho_{\e}(x_i,x_{i+1}) = d_{\e}(x,y).
\end{equation}
Combining inequalities~\eqref{eq1:construct2} and~\eqref{eq2:construct2}, Lemma~\ref{lem:construct2} is established.
\end{proof}
This completes the proof of Example~\ref{thm:construct}.
%


\subsection*{Acknowledgements}
We thank Tuomas Hytonen, Ji Li, Jill Pipher, Nageswari Shanmugalingam, and Jeremy Tyson for helpful discussions and comments.

\addcontentsline{toc}{section}{Bibliography}

\end{document}